\documentclass[11pt]{article}
\usepackage[utf8]{inputenc}
\usepackage{amsmath, amssymb, amsthm}
\usepackage{graphicx}
\usepackage{geometry}
\usepackage{authblk}
\usepackage{xr-hyper}
\usepackage{algorithm}
\usepackage[noend]{algpseudocode}
\usepackage{caption}
\usepackage{subcaption}
\usepackage{fancyhdr}
\usepackage{booktabs}
\usepackage{dsfont}
\usepackage{tikz}
\usepackage[authoryear]{natbib}
\bibliographystyle{apalike}  

\geometry{
    a4paper,
    left=1in,
    right=1in,
    top=1in,
    bottom=1in
}

\usepackage[colorlinks=true,linkcolor=blue,citecolor=black,urlcolor=blue]{hyperref}

\usepackage{color}
\usepackage{ulem}
\usepackage{moreverb}
\usepackage{tikz}
\usepackage{fix-cm}

\newcommand{\Cov}{\mathop{\operatorname{Cov}\/}}
\newcommand{\Var}{\mathop{\operatorname{Var}\/}}

\usepackage{array}

\makeatletter
\newcommand{\thickhline}{%
    \noalign {\ifnum 0=`}\fi \hrule height 1.3pt
    \futurelet \reserved@a \@xhline
}
\newcolumntype{"}{@{\hskip\tabcolsep\vrule width 1pt\hskip\tabcolsep}}
\makeatother

\usepackage{enumitem}
\usepackage{bm} 
\newlist{todolist}{itemize}{2}
\setlist[todolist]{label=$\square$}
\usepackage{enumitem}
\usepackage{bm} 


\newcommand{\wh}{\widehat}

\newcommand{\EE}{\mathbb{E}}
\newcommand{\RR}{\mathbb{R}}

\newcommand{\PP}{\mathbb{P}}

\newcommand{\ZZ}{\mathbb{Z}}

\newcommand{\NN}{\mathbb{N}}

\newtheorem{assumption}{Assumption}

\newcommand{\bu}{\mathbf{u}}
\newcommand{\bx}{\mathbf{x}}
\newcommand{\by}{\mathbf{y}}
\newcommand{\Lip}{\operatorname{Lip}}


\newcommand{\C}{\mathcal{C}}
\newcommand{\D}{\mathcal{D}}

\newcommand{\F}{\mathcal{F}}

\newcommand{\K}{\mathcal{K}}

\newcommand{\N}{\mathcal{N}}

\renewcommand{\S}{\mathcal{S}}

\newcommand{\W}{\mathcal{W}}

\newcommand{\Xb}{\textbf{X}}



\newtheorem{proposition}{Proposition}
\newtheorem{lemma}{Lemma}
\newtheorem{theorem}{Theorem}
\theoremstyle{definition}

\newtheorem{remark}{Remark}
\newtheorem{definition}{Definition}
\newtheorem{corollary}[proposition]{Corollary}
\newtheorem{example}{Example}

\begin{document}

\title{\huge{Ordinal Patterns Based Change Point Detection}}

\author{
 \textbf{Annika Betken}$^{1}$, \textbf{Giorgio Micali}$^{1}$,  and \textbf{Johannes Schmidt-Hieber} $^{1}$ \\
  \texttt{ a.betken@utwente.nl, g.micali@utwente.nl, a.j.schmidt-hieber@utwente.nl}
}
\date{}  
\maketitle
\footnotetext[1]{University of Twente, Faculty of Electrical Engineering, Mathematics, and Computer Science (EEMCS), Drienerlolaan 5, 7522 NB Enschede, Netherlands}

\begin{abstract}
The ordinal patterns of a fixed number of consecutive values in a time series is the spatial ordering of these values. Counting how often a specific ordinal pattern occurs in a time series provides important insights into the properties of the time series. In this work, we prove the asymptotic normality of the relative frequency of ordinal patterns for time series with linear increments.
Moreover, we apply  ordinal patterns to detect changes in the distribution of a time series. 
\end{abstract}

\section{Introduction}\label{sec1}
Ordinal patterns encode the spatial order of temporally-ordered data points. Specifically, the ordinal pattern of order $r+1$ of time series data $\xi_0, \ldots, \xi_{r}$ refers to the permutation $(\pi_0,\ldots, \pi_r)$, where $\pi_j$ denotes the rank of $\xi_j$ within the values $\xi_0, \ldots \xi_r$. For simplicity, we assume that the values of the data points are all different. Mathematically speaking, denoting with $\S_r$ the set of all $(r+1)!$  permutations of $\{0,\ldots, r\}$:
\begin{align}\label{eq:def_op}
\Pi: \RR^{r+1} \longrightarrow \S_r, \quad (\xi_0,  \ldots, \xi_r ) \mapsto (\pi_0, \ldots, \pi_{r}),
\end{align}
where $\pi_j$ denotes the rank of $\xi_j$ within the values $\xi_0, \ldots, \xi_r$. We call $\Pi(\xi_0, \ldots, \xi_r)$ the ordinal pattern of $\xi_0, \ldots, \xi_r$. The six ordinal patterns of order $r+1=3$ are visualized in Figure \ref{fig:op}.
\begin{figure}
\centering
\begin{tikzpicture}[x=1pt,y=1pt]
\path[use as bounding box,fill=black,fill opacity=0.00] (0,0) rectangle (361.35, 72.27);
\begin{scope}
\path[draw=black,line width= 0.6pt] ( 10.68, 10.19) --
	( 34.93, 29.58) --
	( 59.17, 48.97);
\path[draw=black,line width= 0.4pt,line join=round,line cap=round,fill=black] ( 10.68, 10.19) circle (  2.50);
\path[draw=black,line width= 0.4pt,line join=round,line cap=round,fill=black] ( 34.93, 29.58) circle (  2.50);
\path[draw=black,line width= 0.4pt,line join=round,line cap=round,fill=black] ( 59.17, 48.97) circle (  2.50);
\end{scope}

\begin{scope}
\path[draw=black,line width= 0.6pt,line join=round] ( 69.53, 10.19) --
	( 93.77, 48.97) --
	(118.02, 29.58);
\path[draw=black,line width= 0.4pt,line join=round,line cap=round,fill=black] ( 69.53, 10.19) circle (  2.50);
\path[draw=black,line width= 0.4pt,line join=round,line cap=round,fill=black] ( 93.77, 48.97) circle (  2.50);
\path[draw=black,line width= 0.4pt,line join=round,line cap=round,fill=black] (118.02, 29.58) circle (  2.50);
\end{scope}

\begin{scope}
\path[draw=black,line width= 0.6pt,line join=round] (128.37, 48.97) --
	(152.62, 10.19) --
	(176.87, 29.58);
\path[draw=black,line width= 0.4pt,line join=round,line cap=round,fill=black] (128.37, 48.97) circle (  2.50);
\path[draw=black,line width= 0.4pt,line join=round,line cap=round,fill=black] (152.62, 10.19) circle (  2.50);
\path[draw=black,line width= 0.4pt,line join=round,line cap=round,fill=black] (176.87, 29.58) circle (  2.50);
\end{scope}

\begin{scope}
\path[draw=black,line width= 0.6pt,line join=round] (187.22, 48.97) --
	(211.47, 29.58) --
	(235.72, 10.19);
\path[draw=black,line width= 0.4pt,line join=round,line cap=round,fill=black] (187.22, 48.97) circle (  2.50);
\path[draw=black,line width= 0.4pt,line join=round,line cap=round,fill=black] (211.47, 29.58) circle (  2.50);
\path[draw=black,line width= 0.4pt,line join=round,line cap=round,fill=black] (235.72, 10.19) circle (  2.50);
\end{scope}

\begin{scope}
\path[draw=black,line width= 0.6pt,line join=round] (246.07, 29.58) --
	(270.32, 48.97) --
	(294.57, 10.19);
\path[draw=black,line width= 0.4pt,line join=round,line cap=round,fill=black] (246.07, 29.58) circle (  2.50);
\path[draw=black,line width= 0.4pt,line join=round,line cap=round,fill=black] (270.32, 48.97) circle (  2.50);
\path[draw=black,line width= 0.4pt,line join=round,line cap=round,fill=black] (294.57, 10.19) circle (  2.50);
\end{scope}

\begin{scope}
\path[draw=black,line width= 0.6pt,line join=round] (304.92, 29.58) --
	(329.17, 10.19) --
	(353.42, 48.97);
\path[draw=black,line width= 0.4pt,line join=round,line cap=round,fill=black] (304.92, 29.58) circle (  2.50);
\path[draw=black,line width= 0.4pt,line join=round,line cap=round,fill=black] (329.17, 10.19) circle (  2.50);
\path[draw=black,line width= 0.4pt,line join=round,line cap=round,fill=black] (353.42, 48.97) circle (  2.50);
\end{scope}

\definecolor{text}{gray}{0.10}
\begin{scope}
\node[text=text,anchor=base,inner sep=0pt, outer sep=0pt, scale=  0.80] at ( 34.93, 56.08) {(0, 1, 2)};
\end{scope}

\begin{scope}
\node[text=text,anchor=base,inner sep=0pt, outer sep=0pt, scale=  0.80] at ( 93.77, 56.08) {(0, 2, 1)};
\end{scope}

\begin{scope}
\node[text=text,anchor=base,inner sep=0pt, outer sep=0pt, scale=  0.80] at (152.62, 56.08) {(2, 0, 1)};
\end{scope}

\begin{scope}
\node[text=text,anchor=base,inner sep=0pt, outer sep=0pt, scale=  0.80] at (211.47, 56.08) {(2, 1, 0)};
\end{scope}

\begin{scope}
\node[text=text,anchor=base,inner sep=0pt, outer sep=0pt, scale=  0.80] at (270.32, 56.08) {(1, 2, 0)};
\end{scope}

\begin{scope}
\node[text=text,anchor=base,inner sep=0pt, outer sep=0pt, scale=  0.80] at (329.17, 56.08) {(1, 0, 2)};
\end{scope}
\end{tikzpicture}
\caption{The six  ordinal patterns of order $r+1=3$.}
\label{fig:op}
\end{figure}
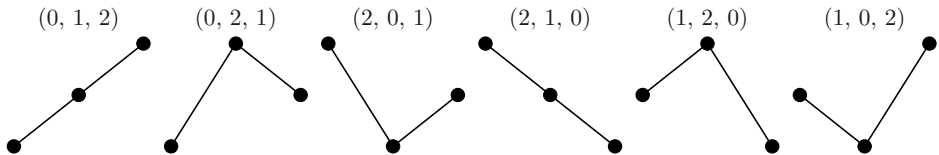
In  modern time series literature, the concept of ordinal patterns was first introduced by \citet{Bandt-Pompe} against the background of defining  permutation entropy as a complexity measure of time series data.
The latter is the Shannon
entropy of the ordinal pattern distribution. As an estimate for ordinal  pattern probabilities we choose the relative frequencies of ordinal patterns in the underlying time series. The corresponding estimator has been studied
by \citet{sinn2011estimation} (for short-range dependent Gaussian time series), and by \citet{betken2021ordinal} (for long-range dependent subordinated Gaussian time series).
Moreover, \citet{schnurr2017testing} measured non-linear correlation of two time series  counting the number of coincident  patterns in both time series, and as a byproduct also study this estimator for a class of short-range dependent time series. 

One application of ordinal patterns is the classification of sleep stages based on EEG signals, crucial for medical purposes such as sleep quality assessment and sleep disorder diagnosis. Manual classification of sleep stages is tedious, subjective, time-consuming, and error-prone. To address this, a plethora of machine learning-based techniques have emerged in recent years, often achieving high accuracy in classification; see, for example,  \citet{supratak2017deepsleepnet, chambon2018deep,CHENG2024106020}. For a comprehensive review of automated sleep stage classification methods see \citet{ZHANG2024651}. However, these methods typically lack interpretability and theoretical justification, may inherit biases present in training data, and are often sensitive to small perturbations of the input; see \citet{lipton2016mythos}. 
To address these challenges, \citet{bandt2020order} introduces an alternative approach to frequency classification in machine learning, based on the so-called turning rate.  In a time series, the turning rate corresponds to the relative number of local maxima and minima in a fixed epoch of the series.
More precisely, the turning rate corresponds to the frequency of observing one of the ordinal patterns $(0, 2, 1)$, $(1, 0, 2)$, $(1, 2, 0)$, $(2, 0, 1)$, each representing a local minimum or maximum, as illustrated in Figure \ref{fig:op}. 

The aim of this work is to propose a hypothesis test for detecting changes in the distribution of a time series, based on variation of the corresponding turning rate series. For this, we derive limit theorems for estimators of ordinal patterns, assuming that the increments of a time series form a linear process, thereby 
allowing for different distributions and dependencies between observations than in previous works on ordinal patterns (a more explicit comparison of our work to previous results will be given in Section \ref{subsec:rel_work}).
As theoretical background, we establish
 empirical process limit theory for short- and long-range dependent multivariate  linear time series. 
These general results, which are of independent interest, then serve as the basis for deriving the asymptotic properties of estimators for ordinal pattern probabilities.

The article is structured as follows: Section \ref{sec:main} establishes central limit theorems for empirical processes of short- and long-range dependent linear time series. In this section, ordinal patterns are formally introduced, and the derived limits are applied to obtain the limiting distribution for estimators of ordinal pattern probabilities. Section \ref{sec:appl} extends these results to define the turning rate and proposes a statistical test for detecting structural changes in the distribution of a time series. Section \ref{sec:sim} presents numerical experiments, including an application to EEG time series data. All proofs are provided in the appendix.

\subsection{Notation} A generic time series is denoted as \((\xi_t)_{t\geq 0}\) and  \(\bm{\xi}_t = (\xi_t, \xi_{t+1}, \ldots, \xi_{t+r})^\top\) represents the random vector of \(r+1 \geq 2\) consecutive time series values. The time series generated by the  increments of \((\xi_t)_{t\geq 0}\) is denoted by \((X_t)_{t\geq 1}\), where \(X_t := \xi_t - \xi_{t-1}\) and \(\mathbf{X}_t = (X_t, \ldots, X_{t+r-1})^\top\).  The set of invertible matrices of size \(r\) is denoted by \(\text{GL}(\mathbb{R}, r)\). The symbol \(\bar{X}_n\) denotes the average \(\bar{X}_n := \frac{1}{n}\sum_{j=1}^n X_j\).  The space \(\ell^m\) denotes the set of real sequences \((a_j)_{j\in \mathbb{N}}\) satisfying \(\sum_{j\in \mathbb{N}}|a_j|^m < \infty\), \(L^\infty(\mathbb{R})\) is the set of all real-valued functions defined on \(\mathbb{R}\) that are almost surely bounded, and \(\C^1(\mathbb{R})\) consists of continuously differentiable functions on \(\mathbb{R}\). The gradient is 
$\nabla f (\mathbf{x}) := \left( \partial_1 f(\mathbf{x}), \ldots, \partial_p f(\mathbf{x}) \right)\;.$ An i.i.d. sequence of centered random vectors  $(\mathbf{Z}_j)_{j\in \mathbb{Z}}$ is called  multivariate zero-mean white noise process with covariance matrix $\Sigma=\Cov(\mathbf{Z}_1)$.
  Lastly, the indicator function of an event $A$ is denoted by $\bm{1}(A)\;.$
\section{Main theoretical results}\label{sec:main}
Let $X_t=\sum_{j=0}^\infty a_j Z_{t-j},$ $t \in \mathbb{Z}$, be a linear process with deterministic coefficients $a_j$ and i.i.d.\ random variables $(Z_j)_{j \in \mathbb{Z}}$. We also assume that $\EE[Z_j]=0$ and $\Var(Z_j)=\sigma_Z^2$ for all $j,$ implying that $X_t$ is centered. By Kolmogorov's three-series theorem,  $X_t$ exists almost surely if $\sum_{j=0}^\infty a_j^2< \infty$; see \citet{wu2002central}. Since the innovations $Z_j$ are not assumed to be Gaussian, the process $X_t$ can be non-Gaussian. The monograph of \citet{MR1742357} highlights that allowing for non-Gaussianity makes the class of processes much richer.

It follows from the definition that the linear process $(X_t)_{t\geq 1}$ is moreover (strictly) stationary. As such, it admits an autocovariance function $\gamma_X(k)$ (for $k \in \mathbb{Z}$) and a corresponding spectral density $f_X(\lambda) = (2\pi)^{-1} \sum_{k=-\infty}^{\infty} \gamma_X(k) \exp(-ik\lambda)$ with $\lambda \in [-\pi,\pi]$. There are various definitions for short- and long-range dependence; see for instance  \citet{STO-004}. While sometimes more specific growth conditions on the spectral density are imposed, they all agree that the process \((X_t)_{t\geq 1}\) can be categorized as exhibiting long-range dependence, short-range dependence, or antipersistence, if, as $|\lambda| \to 0$, the spectral density converges to infinity, a finite positive constant, or zero, respectively. Since $2\pi f_X(0) = \sum \gamma_X(k)$, these conditions can also be equivalently stated in terms of the sum of autocovariances $\gamma(k)$. 
For a linear process, $\gamma_X(k)=\EE[X_tX_{t+k}]=\sum_{j, i\geq 0} a_j a_i \Cov(Z_{t-j},Z_{t+k-i})=\sum_{j\geq 0} a_j a_{|k|+j}$ and $\sum_{k=-\infty}^\infty \gamma_X(k)=(\sum_{j\geq 0} a_j)^2.$
This shows that long-range dependence happens if $\sum_{j\geq 0} a_j=\infty,$ short-range dependence occurs if $\sum_{j\geq 0} a_j$ is a finite constant, and antipersistence corresponds to $\sum_{j\geq 0} a_j=0.$ Standard textbooks restrict these classes further by adding decay conditions; see e.g. Section 2.1.1.3 in \citet{beran2013long}.
\(\Xb_t = \sum_{j\in \mathbb{Z}} A_j \mathbf{Z}_{t-j},\) 
where \((\mathbf{Z}_j)_{j \in \mathbb{Z}}\) is an i.i.d.\ multivariate white noise process, is said to have short memory if $\sum_{j \in \mathbb{Z}}\|A_j\|<\infty. $ In this work, we adopt the notion of long-range dependence proposed by \citet{Kerchagian_Pipiras} (Definitions 2.1 and 2.2). 
Proposition 3.1 in \citet{Kerchagian_Pipiras} shows that if the sequence of coefficient matrices \( A_j \) satisfies 
\begin{equation}  
\label{eq:A_j_matrices}  
A_j \sim \operatorname{diag}\left(j^{d - \frac{1}{2}}\right) A_\infty \quad \text{as } j \to \infty,  
\end{equation}  
for some matrix \( A_\infty \) and a vector \( d = (d_1, \ldots, d_r)^T \in \left(0, \frac{1}{2}\right)^r \), then the linear process \( (\Xb_t)_{t\geq 1} \) exhibits long-range dependence. Here, for $r\times r$ matrices \(U_j\) and \(V_j\),  we write \(U_j \overset{j \to \infty}{\sim} V_j\), if \(u_{ps,j}/v_{ps,j} \to 1\) as \(j \to \infty\) for all entries \((p,s)\) of \(U_j\) and \(V_j\). 

Our aim is to establish (functional) central limit theorems for the relative frequencies of linear processes and apply these to  estimators of ordinal pattern probabilities.
In this work, we focus on the increments $X_t=\xi_t -\xi_{t-1}$ of the process \((\xi_t)_{t \geq 0}\).  Lemma \ref{lemma:matrix_form_OP} shows that the increment process allows us to interpret the ordinal patterns \((\pi_0, \ldots, \pi_r)\) of \((\xi_t)_{t \geq 0}\) as the ordinal patterns of a linear transformation of the increments \((X_t)_{t\geq 1}\). Assuming that the increments form a linear process implies that the vectors  \(\mathbf{X}_t = (X_{t+1}, \ldots, X_{t+r})^\top\) form a multivariate linear process, and for an $s \times r$ matrix $V$, $(V\mathbf{X}_t)_t$ forms a multivariate linear process as well. 
Based on this setup, we establish limiting theorems for multivariate linear processes \((\mathbf{X}_t)_{t \geq 1}\) (see Section \ref{sec.CLTs_general}), using a martingale-decomposition approach.
\subsection{Central limit theorems for  relative frequencies in linear processes}\label{subsec:clt}
\label{sec.CLTs_general}
Let \( r \) be a positive integer, and assume that for \( n \geq r - 1 \), we observe a linear process \( (X_t)_{t \geq 1} \) for \( t = 1, \ldots, n + r - 1 \). A common approach to estimate the probability \( p(u_0, \ldots, u_{r-1}) := \mathbb{P}(X_{t} \leq u_0, X_{t+1} \leq u_1, \ldots, X_{t+r-1} \leq u_{r-1}) \) is by using the relative frequency of this event in the sample, expressed as:
\begin{equation}
    \label{eq.rel_freq_est}
    \wh p_n(u_0, \ldots, u_{r-1}) := \frac{1}{n}
    \sum_{t=1}^{n} \bm{1}\big(X_{t} \leq u_0, \ldots, X_{t+r-1} \leq u_{r-1}\big).
\end{equation}
By taking the expectation, we conclude that this is an unbiased estimator for the probability \( p(u_0, \ldots, u_{r-1}) \).

We now proceed to derive (functional) central limit theorems for the relative frequencies \eqref{eq.rel_freq_est}. To account for the \( r \) inequalities, it is convenient to first reformulate the univariate linear time series as an \( r \)-dimensional multivariate linear process.

\begin{lemma}
\label{lemma:vectorization}
 Given a linear process defined by $X_t=\sum_{j= 0 }^\infty a_j Z_{t-j}$, the multivariate process $\Xb_t:=\left(X_t, X_{t+1}, \ldots, X_{t+r-1}\right)^{\top}$ is linear and satisfies 
    \begin{equation}
        \Xb_t= \sum_{j= 0 }^\infty A_j \mathbf{Z}_{t-j}\;
        \label{consecutive-lineare-representation}
    \end{equation}
    with diagonal coefficient matrices
\begin{align*}
    A_j = \left( \begin{array}{ccccc}
        a_{j-r+1} & 0 &\cdots & 0  \\
        0& a_{j-r+2} & \ddots  & \vdots\\
        \\
        \vdots &\ddots & \ddots& 0 \\
        0 & \cdots& 0 & a_j\\
   \end{array}\right)
\end{align*}
(setting $a_i:=0$ whenever $i <0$) and  i.i.d. innovations  $\mathbf{Z}_{t-j}= Z_{t-j+r-1}\left(1, \ldots, 1\right)^{\top} $ with variance $\mathbf{E}$, where $\mathbf{E}$ denotes the $r\times r$ matrix with all entries equal to $1$.
\end{lemma}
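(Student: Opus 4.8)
The plan is to verify the claimed representation coordinate by coordinate, since saying $(\mathbf{X}_t)_t$ is linear in the multivariate sense means precisely that it admits a representation $\sum_{j\ge 0} A_j \mathbf{Z}_{t-j}$ with i.i.d.\ innovations $(\mathbf{Z}_s)_s$. The entire content is an index-matching identity together with a short check on the innovation structure and the convergence of the series; there is no deep obstacle here, only bookkeeping that must be carried out carefully so that the staircase of coefficients on the diagonals lines up correctly.

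First I would fix a coordinate $k \in \{0, \ldots, r-1\}$ and compute the $k$-th entry of $\sum_{j \ge 0} A_j \mathbf{Z}_{t-j}$. Because $A_j$ is diagonal with $(k+1)$-th diagonal entry $a_{j-r+1+k}$, and every coordinate of $\mathbf{Z}_{t-j}$ equals $Z_{t-j+r-1}$, the $k$-th entry of $A_j \mathbf{Z}_{t-j}$ is $a_{j-r+1+k}\, Z_{t-j+r-1}$. Summing over $j \ge 0$ and substituting $i = j - r + 1 + k$ yields $t - j + r - 1 = t + k - i$, while the convention $a_i := 0$ for $i < 0$ removes exactly the terms with $j < r - 1 - k$, so the sum collapses to $\sum_{i \ge 0} a_i Z_{t+k-i} = X_{t+k}$, the $k$-th entry of $\mathbf{X}_t$. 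This is the crux of the argument: the shifted placement of the coefficients along the diagonals of $A_j$ is designed precisely so that each coordinate telescopes back to the original univariate linear representation.

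Next I would verify the innovation structure. Writing $\mathbf{Z}_s = Z_{s+r-1}(1,\ldots,1)^\top$, the family $(\mathbf{Z}_s)_s$ inherits independence and identical distribution directly from $(Z_j)_j$, since $s \mapsto s + r - 1$ is a bijection of $\mathbb{Z}$ and each $\mathbf{Z}_s$ is a fixed deterministic vector times a single innovation. A direct computation then gives $\Cov(\mathbf{Z}_s) = \sigma_Z^2\, \mathbf{E}$, i.e.\ the scalar $\sigma_Z^2$ times the rank-one all-ones matrix $\mathbf{E}$, which is the claimed covariance structure.

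Finally, I would address convergence of the infinite series. Since each coordinate of $\sum_{j\ge 0} A_j \mathbf{Z}_{t-j}$ is, after the reindexing above, merely a reordering of the series $\sum_{i \ge 0} a_i Z_{t+k-i}$, its almost sure (and $L^2$) existence follows from $\sum_{i\ge 0} a_i^2 < \infty$ by the same Kolmogorov three-series argument already invoked for the univariate process. This makes the vector-valued series well defined and completes the identification of $(\mathbf{X}_t)_t$ as a multivariate linear process with the stated coefficient matrices and innovations.
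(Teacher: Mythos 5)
Your proof is correct and follows essentially the same route as the paper's: the paper's entire argument is the single reindexing $j \mapsto j-r+s+1$ giving $X_{t+s}=\sum_{j\ge 0} a_{j-r+s+1}Z_{t-j+r-1}$ for each coordinate $s$, which is exactly your coordinate-wise index-matching step read in the opposite direction. Your additional checks of the i.i.d.\ innovation structure, the covariance (correctly $\sigma_Z^2\mathbf{E}$ rather than the paper's implicit normalization), and the convergence of the series are left implicit in the paper but are consistent with it.
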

\begin{proof}
Changing $j$ to $j-r+s+1,$ we find for any $s=0,\ldots, r-1,$ $X_{t+s}=\sum_{j= 0 }^\infty a_j Z_{t+s-j}=\sum_{j=0}^\infty a_{j-r+s+1}Z_{t-j+r-1}$ with $a_i=0$ whenever $i<0.$    
\end{proof}
We can now rewrite the relative frequency estimator $\wh p_n(u_1,\ldots,u_r)$ defined in \eqref{eq.rel_freq_est} as 
\begin{align}
    \wh p_n(\bu) :=\frac{1}n
    \sum_{t=1}^{n} \bm{1}\big(\Xb_t\leq \bu\big),
    \label{eq.rel_freq_est2}
\end{align}
with $\bu:=(u_1,\ldots,u_r)^\top$ and $\leq$ understood component-wise. This is an estimator for the probability $
    p(\bu) :=P\big(\Xb_t\leq \bu\big). $

In a next step, we prove a functional central limit theorem for multivariate linear processes with general covariance matrix for $\mathbf{Z}_{j}$ and general coefficient matrix $A_j$ satisfying the following assumption:
\begin{assumption}
\label{assump.1}
There exists a $J\in \mathbb{N}$ and an invertible $r\times r$ matrix $D$ such that $D\sum_{j=0}^J A_j \mathbf{Z}_{t-j}$ is a vector of independent random variables with bounded Lebesgue density. Furthermore, $A_j \neq \mathbf{0}_{r\times r}$ for $j=0,\ldots, J\;.$
\end{assumption}
If $A_0$ is an invertible matrix and $\mathbf{Z}_{t-j}$ consists of independent random variables with bounded Lebesgue density, the condition is satisfied with $J=0$ and $D=A_0^{-1}.$ If the multivariate process has been generated by a univariate linear process as in the setting of Lemma \ref{lemma:vectorization}, $a_0\neq 0$, and the innovations admit a bounded Lebesgue density, then the condition holds with $J=r-1.$ Indeed, note that
\begin{align}
  \sum_{j=0}^{r-1} A_j \mathbf{Z}_{t-j} = 
  \sum_{j=0}^{r-1} Z_{t-j+r-1} A_j
  \left( \begin{array}{c}
  1 \\
  1 \\
  \vdots \\
  1
  \end{array}\right)
  =\left( \begin{array}{ccccc}
        a_0 & 0 &\cdots & 0  \\
        a_1& a_0 & \ddots  & \vdots\\
        \vdots & & \ddots& 0 \\
        a_{r-1} & \cdots& a_1 & a_0\\
   \end{array}\right) \left( \begin{array}{c}
        Z_t  \\
        Z_{t+1} \\
        \vdots\\
        Z_{t+r-1}
   \end{array} \right)=: B\mathbf{Z}_{t,r}.
   \label{expression_density_of_X_MAIN}
\end{align}
Since $a_0\neq 0,$ the triangular matrix on the right-hand side is invertible and $D$ can be taken as its inverse which then gives $D\sum_{j=0}^J A_j \mathbf{Z}_{t-j}=(Z_t,\ldots,Z_{t+r-1})^\top=:\mathbf{Z}_{t,r}.$ Since by assumption, the innovations $Z_t$ admit a bounded Lebesgue density, this verifies Assumption \ref{assump.1} in this case. 

Let $\|\cdot\|$ be the operator norm and denote by $\overset{\D[0,1]}{\Longrightarrow} $ the convergence in distribution in the Skorokhod space $\D[0,1]$ with respect to the  Skorokhod topology; see \citet{billingsley1968convergence}.
\begin{theorem}[Short-Range Dependence]
\label{theorem:SRD_multivariate}
Let $\Xb_t= \sum_{j= 0 }^\infty A_j \mathbf{Z}_{t-j}$ be a multivariate linear process satisfying $\sum_{j=0}^\infty \|A_j\|< \infty$ and Assumption \ref{assump.1}. Then, for any $r$-dimensional vector $\bu=(u_0, \ldots, u_{r-1})^\top,$
 \begin{equation}
  \frac{1}{\sqrt{n}}\sum_{t=1}^{[n\tau]} \Big(\bm{1}\big(\Xb_t\leq \bu\big)
  -p(\bu)\Big)\overset{\D[0,1]}{\Longrightarrow} \sigma B(\tau)\;, \quad \tau \in [0,1]\;,   
 \end{equation}
with variance $\sigma^2:= \Var\left( \bm{1}( \mathbf{X}_1 \leq \mathbf{u}) \right) +2\sum_{j=1}^{\infty}\Cov \left(  \bm{1}( \mathbf{X}_1 \leq \mathbf{u}),  \bm{1}( \mathbf{X}_{1+j} \leq \mathbf{u}) \right) $. 
\end{theorem}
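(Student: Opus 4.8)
The plan is to fix the threshold $\bu$ and reduce the statement to a functional central limit theorem for the partial sums of the single bounded, centred, stationary sequence $Y_t := \bm{1}(\mathbf{X}_t \le \bu) - p(\bu)$. Since $\mathbf{X}_t = \sum_{j\ge 0} A_j \mathbf{Z}_{t-j}$ is a measurable function of the i.i.d.\ innovations $(\mathbf{Z}_s)_{s\le t}$, the sequence $(Y_t)$ is a Bernoulli-shift functional, hence strictly stationary and ergodic, and adapted to the filtration $\F_t := \sigma(\mathbf{Z}_s : s \le t)$. Writing $S_m := \sum_{t=1}^m Y_t$, the claim becomes $n^{-1/2} S_{[n\tau]} \overset{\D[0,1]}{\Longrightarrow} \sigma B(\tau)$, which I would establish through an $L^2$ martingale approximation in the spirit of Gordin and of Wu, exactly the martingale-decomposition approach announced in the text.

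The heart of the argument is a quantitative control of the dependence of the \emph{discontinuous} functional $Y_t$, and this is where Assumption \ref{assump.1} enters. I would use the coupling in which the innovation $\mathbf{Z}_0$ is replaced by an independent copy $\mathbf{Z}_0^\ast$, producing $\mathbf{X}_t^\ast = \mathbf{X}_t + A_t(\mathbf{Z}_0^\ast - \mathbf{Z}_0)$ and $Y_t^\ast := \bm{1}(\mathbf{X}_t^\ast \le \bu) - p(\bu)$. Because $Y_t$ and $Y_t^\ast$ are indicators of the box $\prod_{i}(-\infty, u_i]$, the event $\{Y_t \ne Y_t^\ast\}$ forces $\mathbf{X}_t$ to lie, in at least one coordinate $i$, within a slab of half-width $|(A_t(\mathbf{Z}_0^\ast - \mathbf{Z}_0))_i|$ of the face $\{x_i = u_i\}$, so that $\EE|Y_t - Y_t^\ast| \le \sum_{i=1}^r \PP(|X_{t,i} - u_i| \le |(A_t(\mathbf{Z}_0^\ast - \mathbf{Z}_0))_i|)$. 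The key lemma I would prove is that Assumption \ref{assump.1} yields a uniform bound on the conditional Lebesgue density of $\mathbf{X}_t$ given all innovations outside the window $\{t-J, \ldots, t\}$: conditioning on that window, $D\sum_{j=0}^J A_j \mathbf{Z}_{t-j}$ has bounded density by hypothesis, and the remaining far-past terms only shift the law, so the conditional density of $\mathbf{X}_t$ is bounded by a constant $C$. This anti-concentration estimate turns each slab probability into $C$ times the slab width, giving $\EE|Y_t - Y_t^\ast| \lesssim \|A_t\|$.

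With this estimate in hand, I would bound the martingale-projection coefficients $\|P_0 Y_t\|_2$, where $P_k := \EE[\,\cdot\mid\F_k] - \EE[\,\cdot\mid\F_{k-1}]$, by the coupling quantity $\|Y_t - Y_t^\ast\|_2 = (\EE|Y_t - Y_t^\ast|)^{1/2}$, and combine these with the short-memory summability $\sum_j \|A_j\| < \infty$ to verify a projective (Hannan / Maxwell--Woodroofe) condition. This yields a stationary, square-integrable martingale-difference sequence $D_t := \sum_{k\ge 0} P_t Y_{t+k}$ and an approximating martingale $M_m := \sum_{t=1}^m D_t$ with $\|S_m - M_m\|_2 = o(\sqrt{m})$. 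The martingale invariance principle (Brown--McLeish) then gives $n^{-1/2} M_{[n\tau]} \overset{\D[0,1]}{\Longrightarrow} \sigma B(\tau)$ with $\sigma^2 = \EE[D_0^2] = \lim_{n} n^{-1}\Var(S_n)$; an analogous coupling bound shows that $|\Cov(Y_1, Y_{1+j})|$ is summable, so this long-run variance is finite and equals the stated $\Var(\bm{1}(\mathbf{X}_1 \le \bu)) + 2\sum_{j\ge1}\Cov(\bm{1}(\mathbf{X}_1 \le \bu), \bm{1}(\mathbf{X}_{1+j} \le \bu))$. Transferring the convergence from $M$ to $S$ uniformly in $\tau$, via Doob's maximal inequality applied to $S - M$, upgrades the one-dimensional convergence to convergence in $\D[0,1]$ and supplies tightness.

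The main obstacle is precisely the discontinuity of $\bx \mapsto \bm{1}(\bx \le \bu)$: the Lipschitz-type dependence bounds that make short-memory linear processes tractable do not apply to the indicator, and naive coupling only controls where $\mathbf{X}_t$ moves, not whether the indicator flips. Bridging this gap is the role of the anti-concentration lemma, and the delicate point is to propagate the bounded-density hypothesis on the freshest $J+1$ innovations (Assumption \ref{assump.1}, through the invertible transform $D$) to a uniform conditional density bound for the full dependent vector $\mathbf{X}_t$, and then to make this estimate interface with $\sum_j \|A_j\| < \infty$ so that the projective condition holds and $\sigma^2 < \infty$. The remaining ingredients---stationarity and ergodicity, the martingale central limit theorem, tightness, and the identification of the variance---are comparatively routine.
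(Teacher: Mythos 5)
Your overall architecture (Bernoulli-shift representation, martingale projections $P_k=\EE[\,\cdot\mid\F_k]-\EE[\,\cdot\mid\F_{k-1}]$, a Gordin/Hannan-type approximation, the martingale invariance principle, and a maximal inequality to transfer to $\D[0,1]$) is the same family of techniques the paper uses, and your variance identification is fine. But there is a genuine quantitative gap in the step that is supposed to interface the dependence bound with the hypothesis $\sum_j\|A_j\|<\infty$. Your coupling plus anti-concentration argument yields $\EE|Y_t-Y_t^\ast|\lesssim \|A_t\|$, and since $Y_t-Y_t^\ast$ is a difference of indicators you correctly get $\|Y_t-Y_t^\ast\|_2=(\EE|Y_t-Y_t^\ast|)^{1/2}\lesssim\|A_t\|^{1/2}$, hence $\|P_0Y_t\|_2\lesssim\|A_t\|^{1/2}$. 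Hannan's projective condition then requires $\sum_t\|A_t\|^{1/2}<\infty$, which does \emph{not} follow from $\sum_t\|A_t\|<\infty$ (take $\|A_t\|\asymp t^{-1}(\log t)^{-2}$). The weaker Maxwell--Woodroofe condition does not rescue you either: with your bound it reduces to $\sum_t t^{-1/2}\bigl(\sum_{j\geq t}\|A_j\|\bigr)^{1/2}<\infty$, which is exactly Wu's condition that the paper explicitly identifies (Section~\ref{subsec:rel_work}) as strictly stronger than $\sum_j\|A_j\|<\infty$. So as written your argument proves the theorem only under a stronger summability hypothesis than the one stated.

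The repair is to smooth \emph{before} taking the $L^2$ norm, which is precisely what the paper does. Since $P_0Y_t=\EE[Y_t-Y_t^\ast\mid\F_0]$ and $\EE[\bm{1}(\Xb_t\leq\bu)\mid\F_{t-j}]=p_j(\bu-\mathbf{R}_{t,j})$, the projection $U_{t,j}=p_j(\bu-\mathbf{R}_{t,j})-p_{j+1}(\bu-\mathbf{R}_{t,j+1})$ is a difference of the \emph{conditional} distribution functions $p_j$, evaluated at arguments differing by $A_{j+1}\mathbf{Z}_{t-j-1}$ plus a centering term. The paper's Lemma~\ref{lem.Lip} shows that Assumption~\ref{assump.1} makes $p_J$ Lipschitz and that the recursion $p_{j+1}(\bu)=\int p_j(\bu-A_{j+1}\mathbf{t})\,d\mathcal{G}(\mathbf{t})$ propagates the Lipschitz constant to all $j\geq J$; this gives the pointwise bound $|U_{t,j}|\lesssim\|A_{j+1}\|(\|\mathbf{Z}_{t-j-1}\|+\EE\|\mathbf{Z}_1\|)$ and hence $\|U_{t,j}\|_2\lesssim\|A_{j+1}\|$ --- linear rather than square-root in $\|A_{j+1}\|$ --- which is exactly what is needed for the long-run variance and the negligibility of the truncation remainder under $\sum_j\|A_j\|<\infty$. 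Your anti-concentration lemma contains the right hypothesis-use (bounded conditional density via the invertible $D$ of Assumption~\ref{assump.1}), but it must be deployed to establish Lipschitz continuity of the $p_j$, not a slab bound on the raw indicator flip probability.
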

In particular, for $\tau=1$, we obtain
\begin{equation}
    \sqrt{n}\Big(\widehat{p}_n(\bu) - p(\bu)\Big)\xrightarrow{\D} \N(0,\sigma^2)\;. 
    \label{eq:asymptotic_normality_SRD}
\end{equation}
The proof of Theorem \ref{theorem:SRD_multivariate} is provided in Appendix \ref{appendix:SRD}. The key ingredient is to verify the conditions of a modified version of Theorem \ref{Theorem:Furmanczyk} in \citet{Furmanczyk}.

We now discuss the case where the underlying multivariate linear process exhibits long-range dependence. 
\begin{theorem}[Long-Range Dependence]
\label{thm.2}
Let \( \mathbf{X}_t = \sum_{j=0}^\infty A_j \mathbf{Z}_{t-j} \) be a multivariate linear process satisfying \( A_j \sim j^{d-1} A_\infty \) as \( j \to \infty \), where \( A_\infty \in \text{GL}(\mathbb{R}, r) \) and \( d \in (0,1/2) \). The innovations \( ( \mathbf{Z}_j )_{j \in \mathbb{Z}} \) are i.i.d.\ with variance \( \Sigma \), and are assumed to satisfy the moment condition \( \mathbb{E}[\|\mathbf{Z}_1\|^4] < \infty \). Define the cumulative distribution function \( p_s(\cdot) = \mathbb{P}( \sum_{j=0}^s A_j \mathbf{Z}_{t-j} \leq \cdot) \). If there exists a positive integer \( s_0 \) such that  
\begin{equation}
    \sup_{\bx\in \mathbb{R}^r} \, \max_{s\geq s_0} \left(  |p_s(\mathbf{x}) |
    + \sum_{i=1}^r
    | \partial_i p_s(\mathbf{x}) |
    + \sum_{i, j=1}^r
    | \partial^2_{i,j} p_s(\mathbf{x}) | \right)<\infty,
    \label{eq:LIP-condition_ORIGINAL}
\end{equation}
then, 
\begin{equation}
    n^{1/2-d}\Big(\widehat{p}_n(\bu) - p(\bu)\Big)\xrightarrow{\D} \N\bigg(0,\frac{\Gamma(d)^2 }{\Gamma(2d+2)\cos (\pi d)}( \nabla p(\bu))^{\top} A_\infty \Sigma A_\infty^{\top} \nabla p(\bu) \bigg)\;. 
    \label{eq:asymptotic_normality_LRD}
\end{equation}
\end{theorem}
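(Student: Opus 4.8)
The plan is to reduce the empirical fluctuation of $\wh p_n$ to a scalar long-range dependent partial sum by a first-order expansion of the conditional distribution function, and then to apply a martingale central limit theorem to the resulting linear term. Fix $\bu$, write $\zeta_t := \bm 1(\mathbf X_t \le \bu) - p(\bu)$, and note that $n^{1/2-d}(\wh p_n(\bu)-p(\bu)) = n^{-1/2-d}\sum_{t=1}^n \zeta_t$. Introduce a truncation level $s = s_n = \lfloor n^\beta\rfloor$ with $\beta \in (1/2,1)$ to be fixed below, and split $\mathbf X_t = \mathbf V_t^{(s)} + \mathbf W_t^{(s)}$ with $\mathbf V_t^{(s)} = \sum_{j=0}^s A_j \mathbf Z_{t-j}$ and $\mathbf W_t^{(s)} = \sum_{j>s} A_j \mathbf Z_{t-j}$. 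Since $\mathbf V_t^{(s)}$ and $\mathbf W_t^{(s)}$ are independent and $p_s$ is the law of $\mathbf V_t^{(s)}$, conditioning on $\mathbf W_t^{(s)}$ yields $\mathbb E[\bm 1(\mathbf X_t \le \bu)\mid \mathbf W_t^{(s)}] = p_s(\bu - \mathbf W_t^{(s)})$, and in particular $\mathbb E[p_s(\bu - \mathbf W_t^{(s)})] = p(\bu)$.

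First I would decompose $\zeta_t = M_t + B_t$ with $M_t := \bm 1(\mathbf X_t \le \bu) - p_s(\bu - \mathbf W_t^{(s)})$ and $B_t := p_s(\bu - \mathbf W_t^{(s)}) - p(\bu)$, both centered. Since by \eqref{eq:LIP-condition_ORIGINAL} the functions $p_s$ have first and second derivatives bounded uniformly in $s \ge s_0$ and in $\bx$, a second-order Taylor expansion of $p_s$ around $\bu$ gives
\[
B_t = \big(p_s(\bu) - p(\bu)\big) - \nabla p_s(\bu)^\top \mathbf W_t^{(s)} + \tfrac12 (\mathbf W_t^{(s)})^\top \nabla^2 p_s(\tilde{\bx}_t)\, \mathbf W_t^{(s)}
\]
for intermediate points $\tilde{\bx}_t$. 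The aim of the reduction step is to show that, after summation and normalization, only the linear term survives, i.e.
\[
\sum_{t=1}^n \zeta_t = -\,\nabla p(\bu)^\top \sum_{t=1}^n \mathbf X_t + o_P\big(n^{1/2+d}\big).
\]

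The bulk of the work---and the main obstacle---is to verify that every contribution other than the linear term is $o_P(n^{1/2+d})$. Summing the Taylor expansion, the deterministic term $n(p_s(\bu)-p(\bu))$ and the mean of the quadratic term satisfy $n(p_s(\bu)-p(\bu)) + \tfrac12\sum_t \mathbb E[(\mathbf W_t^{(s)})^\top \nabla^2 p_s(\tilde{\bx}_t)\, \mathbf W_t^{(s)}] = \mathbb E[\sum_t B_t] = 0$, while each is of order $n\,\|\Cov(\mathbf W_t^{(s)})\| \asymp n s^{2d-1}$, which is $o(n^{1/2+d})$ exactly when $\beta>1/2$ (here $d<1/2$ guarantees $\Cov(\mathbf W_t^{(s)}) = \sum_{j>s}A_j\Sigma A_j^\top \asymp s^{2d-1}\to 0$). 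The centered quadratic form $\sum_t[(\mathbf W_t^{(s)})^\top \nabla^2 p_s\,\mathbf W_t^{(s)} - \mathbb E(\cdot)]$ has standard deviation of order $n^{\max(1/2,\,2d)} = o(n^{1/2+d})$, which is where the moment assumption $\mathbb E\|\mathbf Z_1\|^4<\infty$ enters, through the variance of a quadratic form in the innovations. The martingale part $\sum_t M_t$ is negligible since each $M_t$ is conditionally centered given the tail innovations, so its partial sums are controlled through the martingale/weak-dependence structure together with the uniform smoothness of $p_s$. Finally, the linear term $-\nabla p_s(\bu)^\top\sum_t \mathbf W_t^{(s)}$ is replaced by $-\nabla p(\bu)^\top\sum_t\mathbf X_t$ at cost $o_P(n^{1/2+d})$, using $\nabla p_s(\bu)\to\nabla p(\bu)$ at rate $O(s^{2d-1})$ and $\Var(\sum_t\mathbf V_t^{(s)})\asymp n s^{2d}$, the latter forcing $\beta<1$. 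These constraints leave the nonempty window $\beta\in(1/2,1)$.

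It remains to prove a central limit theorem for the linear leading term. The scalar process $W_t := \nabla p(\bu)^\top \mathbf X_t = \sum_{j\ge 0} (\nabla p(\bu)^\top A_j)\, \mathbf Z_{t-j}$ is a long-range dependent linear process with row coefficients $\nabla p(\bu)^\top A_j \sim j^{d-1}\,\nabla p(\bu)^\top A_\infty =: j^{d-1} b^\top$, so that its autocovariance satisfies $\gamma_W(k) \sim (b^\top \Sigma b)\, B(d,1-2d)\, k^{2d-1}$ and $n^{-(2d+1)}\Var(\sum_{t=1}^n W_t) \to \frac{B(d,1-2d)}{d(2d+1)}\, b^\top \Sigma b$. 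Writing $W_t$ through the innovation projections $\mathcal P_m W_t := \mathbb E[W_t\mid \mathcal F_m] - \mathbb E[W_t\mid \mathcal F_{m-1}]$, where $\mathcal F_m = \sigma(\mathbf Z_m, \mathbf Z_{m-1}, \dots)$, expresses $\sum_{t=1}^n W_t$ as a sum of martingale differences whose conditional variances converge after normalization; the martingale central limit theorem then gives $n^{-1/2-d}\sum_{t=1}^n W_t \xrightarrow{\D} \N\big(0, \tfrac{\Gamma(d)^2}{\Gamma(2d+2)\cos(\pi d)}\, b^\top \Sigma b\big)$, where I use the identity $\frac{B(d,1-2d)}{d(2d+1)} = \frac{\Gamma(d)^2}{\Gamma(2d+2)\cos(\pi d)}$ (obtained from the reflection and duplication formulas for the Gamma function). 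Since $b^\top \Sigma b = (\nabla p(\bu))^\top A_\infty \Sigma A_\infty^\top \nabla p(\bu)$, combining with the reduction principle and Slutsky's theorem yields \eqref{eq:asymptotic_normality_LRD}.
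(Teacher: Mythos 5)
Your proposal is correct in its overall architecture but follows a genuinely different route from the paper. The paper proves a reduction principle via the full Ho--Hsing telescoping martingale decomposition $\bm{1}(\Xb_t\leq\bu)-p(\bu)=\sum_{j=0}^\infty U_{t,j}$, inserts a first-order correction $\nabla p_{j-1}(\bu-\mathbf{R}_{t,j})A_j\mathbf{Z}_{t-j}$ at \emph{every} lag $j$, and bounds the variances of the three resulting remainder terms $T_n^{(1)},T_n^{(2)},T_n^{(3)}$ directly, with no tuning parameter; the CLT for the surviving linear term $\nabla p(\bu)^\top\bar{\Xb}_n$ is then imported from Chung (2002). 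You instead truncate once at a level $s_n=\lfloor n^\beta\rfloor$, condition on the tail $\sigma$-field to split off $M_t$ and $B_t$, Taylor-expand $p_{s_n}$ a single time, and re-derive the linear-term CLT from innovation projections; your window $\beta\in(1/2,1)$ is exactly what makes the deterministic/quadratic bias $O(ns_n^{2d-1})$ and the truncated-part variance $O(ns_n^{2d})$ both negligible against $n^{1/2+d}$. Your approach is arguably more transparent (one Taylor expansion instead of infinitely many), at the price of the extra parameter $\beta$; the paper's approach avoids the truncation bookkeeping and yields slightly sharper remainder rates. Both hinge on the same hypothesis, the uniform bounds on $p_s$ and its first two derivatives for $s\geq s_0$, and your Gamma-function identity for the limiting constant checks out.

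Two spots in your sketch deserve more care. First, the negligibility of $\sum_t M_t$ is the step you gloss over most, and a naive variance bound gives only $O(n^{1+2d})$, the same order as the main term; the argument that actually works is the orthogonality $\EE[U_{t,j}U_{t',j'}]=0$ unless $t-j=t'-j'$ (the paper's Lemma~\ref{lemma6.4}) combined with the restriction $j,j'\leq s_n$, which forces $|t-t'|\leq s_n$ and, together with $\EE[U_{t,j}^2]\lesssim\|A_{j+1}\|^2$ (valid for $j\geq s_0$ by the uniform Lipschitz bound on $p_j$), yields $\Var(\sum_t M_t)=O(ns_n^{2d})=o(n^{1+2d})$ --- so the constraint $\beta<1$ is needed here too, not only for replacing $\mathbf{W}_t^{(s)}$ by $\Xb_t$. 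Second, your claimed rate $\nabla p_s(\bu)-\nabla p(\bu)=O(s^{2d-1})$ is optimistic: since the intermediate point in the Taylor expansion of $\nabla p_s$ is correlated with $\mathbf{W}_t^{(s)}$, the available bound is $O(\EE\|\mathbf{W}_t^{(s)}\|)=O(s^{d-1/2})$; this is still $o(1)$ and suffices for the replacement, so the conclusion is unaffected.
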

The proof of Theorem \ref{thm.2} can be found in Appendix \ref{appendix:LRD}. 
A key ingredient of the proof is to establish a so-called reduction principle, stating that for any $\mathbf{u} \in \mathbb{R}^r,$
\begin{equation}
    n^{\frac{1}{2} - d} \left| \frac{1}{n} \sum_{t=1}^n \bm{1}\big(\mathbf{X}_t \leq \bu\big) - p(\bu) + (\nabla p(\bu))^\top \bar{\mathbf{X}}_n \right| \xrightarrow{\mathbb{P}} 0.
\end{equation}
\subsection{Ordinal patterns }
\label{preliminaries}

We apply the central limit theorems derived in the previous section to ordinal patterns. Consider a univariate time series $(\xi_t)_{t\geq 0}$. 
\begin{definition}
\label{def:ordinal_pattern}
Let \( S_r \) denote the set of permutations of \(\{0, \dots, r\}\), which we write as \((r + 1)\)-tuples containing each of the numbers \( 0, \dots, r \) exactly once. By the \textit{ordinal pattern} of order \( r \), we refer to the permutation
\[
\Pi(\xi_0, \dots, \xi_r) = (\pi_0, \dots, \pi_r) \in S_r,
\]
which satisfies
\[
\xi_{\pi_0} \geq \cdots \geq \xi_{\pi_r},
\]
and \(\pi_{i-1} > \pi_i\) if \(\xi_{\pi_{i-1}} = \xi_{\pi_i}\) for \( i = 1, \dots, r - 1 \). We say that the time series $(\xi_t)_{t\geq 0}$ has ordinal pattern $(\pi_0,\ldots,\pi_r)\in S_r$ at time $t,$ if $\Pi(\xi_t, \cdots, \xi_{t+p})=(\pi_0, \ldots, \pi_{r})\;.$
\end{definition}
Ordinal patterns look for specific orderings of the time series values over $r+1$ consecutive time instances. The frequency at which they occur can provide some insights about the distributional properties of the time series. 

Following \citet{sinn2011estimation} and \citet{betken2021ordinal}, we can rewrite ordinal patterns as inequalities of the increment process $X_t:=\xi_t-\xi_{t-1}.$ Let the row vector $e_k$ be the $k$-th standard basis vector of $\mathbb{R}^{r+1}$ and recall that $\bu \leq 0$ for a vector $\bu$ means that all entries of $\bu$ are $\leq 0$.
\begin{lemma}
\label{lemma:matrix_form_OP}
The time series $(\xi_t)_{t\geq 0}$ has ordinal pattern $\pi=(\pi_0,\ldots,\pi_r)\in S_r$ at time $t,$ if and only if the stacked increment process $\Xb_{t+1}=(X_{t+1},\ldots,X_{t+r})^\top$ satisfies $V_\pi \Xb_{t+1}\leq 0$
with 
\begin{equation}
V_\pi :=
\underbrace{\left( \begin{array}{cccccc}
    -1 & 1 & 0 & 0 & \cdots & 0\\
     0 & -1 & 1 & 0 & \cdots & 0\\
     \vdots & &\ddots & \ddots & & \vdots \\
     0 & \cdots &0  & -1 & 1 & 0\\
0 & \cdots &0 &0 & -1 & 1 \\
\end{array}\right)}_{r \times (r+1)}
\underbrace{
 \left( \begin{array}{c}
      e_{\pi_0+1} \\
            e_{\pi_1+1} \\
      \vdots \\ \vdots \\
      e_{\pi_{r}+1} \\
\end{array}\right)}_{(r+1) \times (r+1)} \underbrace{ \left( \begin{array}{cccc}
    0 & \cdots &\cdots &  0   \\
     1& \ddots& &  \vdots\\
     \vdots & \ddots & \ddots & \vdots\\
     \vdots && \ddots &  0\\
     1 & \cdots &\cdots &  1  \\
\end{array}\right)}_{(r+1)\times r}\;.
\label{matrix_W}
\end{equation}
In other words, $\left\{\Pi(\xi_t, \xi_{t+1}, \ldots, \xi_{t+r})  =\pi\right\} = \{ V_\pi \mathbf{X}_{t+1}\leq \mathbf{0}\}\;.$ The matrix $V_{\pi}$ is moreover invertible.  
\end{lemma}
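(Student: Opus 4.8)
The plan is to read the displayed factorization as a product $V_\pi = M P_\pi L$, where $M$ is the $r\times(r+1)$ first-difference matrix (left factor), $P_\pi$ is the $(r+1)\times(r+1)$ permutation with rows $e_{\pi_0+1},\ldots,e_{\pi_r+1}$ (middle factor), and $L$ is the $(r+1)\times r$ partial-summation matrix whose $i$-th row ($i=0,\ldots,r$) carries ones in its first $i$ entries (right factor). I would then track the increment vector $\Xb_{t+1}=(X_{t+1},\ldots,X_{t+r})^\top$ through each factor. First, the telescoping identity $\sum_{k=1}^i X_{t+k}=\xi_{t+i}-\xi_t$ shows that $L\Xb_{t+1}$ is the recentered block $(\xi_{t+i}-\xi_t)_{i=0,\ldots,r}$ (its leading entry being $0$, since the top row of $L$ vanishes). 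Next, because the $i$-th row of $P_\pi$ is $e_{\pi_i+1}$, the product $P_\pi L\Xb_{t+1}$ reorders this into the vector with $i$-th entry $\xi_{t+\pi_i}-\xi_t$. Finally, applying $M$ takes consecutive differences, producing the vector with entries $(\xi_{t+\pi_i}-\xi_t)-(\xi_{t+\pi_{i-1}}-\xi_t)=\xi_{t+\pi_i}-\xi_{t+\pi_{i-1}}$ for $i=1,\ldots,r$, where the common offset $\xi_t$ cancels. Hence $V_\pi\Xb_{t+1}\le 0$ holds componentwise if and only if $\xi_{t+\pi_{i-1}}\ge\xi_{t+\pi_i}$ for every $i$, i.e.\ if and only if $\xi_{t+\pi_0}\ge\cdots\ge\xi_{t+\pi_r}$, which is precisely the defining condition for $(\xi_t)_{t\geq 0}$ to have ordinal pattern $\pi$ at time $t$ in Definition \ref{def:ordinal_pattern} (under the standing assumption of distinct values, the non-strict inequalities are automatically strict; the tie-breaking clause is needed only on the measure-zero event of coincident values).

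For the invertibility claim, I would argue through the kernel of the $r\times r$ matrix $V_\pi=M P_\pi L$. Suppose $V_\pi x=0$. Since $\ker M=\operatorname{span}\{(1,\ldots,1)^\top\}$ (a first difference vanishes exactly on constant vectors), $P_\pi L x$ must be a constant vector; as $P_\pi$ only permutes coordinates, it maps constant vectors to constant vectors, so $Lx$ is itself constant. But the first row of $L$ is zero, which forces that constant to be $0$, hence $Lx=0$. The map $L$ is injective because $x_i=(Lx)_i-(Lx)_{i-1}$ recovers $x$ from $Lx$; therefore $x=0$, and $V_\pi$ is invertible.

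I expect the only delicate part to be bookkeeping: fixing the exact indexing of the three factors so that the offset $\xi_t$ cancels and the resulting inequalities appear in the correct, non-reversed order, together with the mild strict-versus-non-strict subtlety around ties. Once the index conventions are pinned down, both the equivalence and the kernel computation for invertibility are routine.
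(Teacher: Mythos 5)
Your proposal is correct and follows essentially the same route as the paper: tracking $\Xb_{t+1}$ through the partial-summation, permutation, and first-difference factors to obtain the vector $(\xi_{t+\pi_i}-\xi_{t+\pi_{i-1}})_{i=1}^r$, and proving invertibility by intersecting the kernel of the left two factors (constant vectors) with the image of the right factor (vectors with vanishing first entry). The only difference is cosmetic — you make the injectivity of the summation matrix explicit and flag the tie-breaking subtlety, both of which the paper leaves implicit.
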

\begin{proof} Observe that the matrix vector product of the right-most matrix in \eqref{matrix_W} with $\Xb_{t+1}$ gives the column vector $(\xi_t-\xi_t,\xi_{t+1}-\xi_t,\ldots,\xi_{t+r}-\xi_t)^\top.$ Multiplying this vector from the left with the $(r+1)\times (r+1)$ permutation matrix that occurs in the middle of the matrix product that defines $V_{\pi},$ reorders the entries and gives $(\xi_{t+\pi_0}-\xi_t,\xi_{t+\pi_1}-\xi_t,\ldots,\xi_{t+\pi_r}-\xi_t)^\top.$ Thus, $V_{\pi}\Xb_{t+1}=(\xi_{\pi_1}-\xi_{\pi_0},\ldots,\xi_{\pi_r}-\xi_{\pi_{r-1}})^\top.$ Hence $V_{\pi}\Xb_{t+1}\leq 0$ if and only if $\xi_{t+\pi_{i+1}}- \xi_{t+\pi_{i}}\leq 0,$ for all $i=0, \ldots, r-1.$ 
To see that $V_{\pi}$ is invertible, one can use again the definition of $V_{\pi}$ as a product of three matrices in \eqref{matrix_W} and observe that the kernel of the product of the two matrices on the left is $(1,\ldots,1)^\top$ whereas the image of the right-most matrix consists of all vectors with the first entry equaling zero. Since the intersection of this kernel and this image is the zero-vector, the kernel of $V_{\pi}$ is trivial and thus, $V_{\pi}$ is invertible and the lemma holds.
\end{proof}
The relative frequency of the ordinal patterns $\pi$ occurring in the time series $\xi_0,\ldots,\xi_{n+r-1}$ is 
\begin{align}
    \wh p_n(\pi) 
    = \frac{1}{n}\sum_{t=0}^{n-1} \bm{1} \left( \Pi(\xi_t, 
    \ldots, \xi_{t+r})= \pi \right) =\frac{1}{n}\sum_{t=1}^n \bm{1}\big(V_{\pi}\Xb_t\leq \mathbf{0}\big).
\label{ordinal_pattern_estimator}
\end{align}
This is an estimator of the probability that the ordinal pattern $\pi$ occurs, which is,
\begin{align}
    p(\pi):=\PP \left( \Pi(\xi_0, 
    \ldots, \xi_{r})= \pi \right) = \mathbb{P}\big(V_\pi \Xb_1\leq \mathbf{0}\big).
\end{align}
To analyze the estimator $\hat{p}_n(\pi)$, we assume that the increment process $(X_t)_{t\geq 1}$ is a linear process $X_t=\sum_{j=0}^\infty b_j Z_{t-j}.$ Applying Lemma \ref{lemma:vectorization}, it then follows that $\Xb_t=(X_t,X_{t+1},\ldots,X_{t+r-1})^\top$ is a multivariate linear process that can be written as $\Xb_t=\sum_{j=0}^\infty B_j \mathbf{Z}_{t-j}$ with diagonal coefficient matrix 
\begin{align}
    B_j = \left( \begin{array}{ccccc}
        b_{j-r+1} & 0 &\cdots & 0  \\
        0& b_{j-r+2} & \ddots  & \vdots\\
        \\
        \vdots &\ddots & \ddots& 0 \\
        0 & \cdots& 0 & b_j\\
   \end{array}\right)
    \label{eq.Bj_def}
\end{align}
(setting $b_\ell:=0$ whenever $\ell <0$) and i.i.d. innovations  $\mathbf{Z}_{t-j}= Z_{t-j+r-1}\left(1, \ldots, 1\right)^{\top} $ with variance $\mathbf{E}$, where $\mathbf{E}$ denotes the $r\times r$-matrix with all entries equal to $1$. Moreover, $V_\pi \Xb_t=\sum_{j=0}^\infty A_j \mathbf{Z}_{t-j}$ with $A_j=V_{\pi}B_j.$ Thus, also $(V_\pi \Xb_t)_t$ is a multivariate linear process. By applying now the central limit theorems from the previous section to $(V_\pi \Xb_t)_t$, we obtain central limit theorems for the relative frequencies $\widehat p_n(\pi)$  of ordinal patterns. A consequence of Theorem \ref{theorem:SRD_multivariate} is the following theorem:
\begin{theorem}[Short-Range Dependence]
\label{theorem:convergence_OP_SRD}
Let $(\xi_t)_{t\geq0}$ be a time series whose increments $X_t=\xi_t-\xi_{t-1}$ form a linear process $X_t=\sum_{j=0}^\infty b_jZ_{t-j}$ with $\sum_{j=0}^\infty |b_j|<\infty,$ and $b_0\neq 0.$ If $Z_1$ admits a continuous and bounded probability density function, then, 
 \begin{equation}
  \frac{1}{\sqrt{n}}\sum_{t=1}^{[n\tau]} \Big(\bm{1} \left( \Pi(\xi_{t-1}, \ldots, \xi_{t+r-1})=\pi \right) - p(\pi)\Big)\overset{\D[0,1]}{\Longrightarrow} \sigma_\pi B(\tau)\;, \quad \tau \in [0,1]\;, 
\label{eq:uniform_asymptotic_normality_op} 
 \end{equation}
 with variance $\sigma_\pi^2= \Var \left( \bm{1} \left( \Pi(\xi_{0}, \ldots, \xi_{r})=\pi \right)\right) +2\sum_{j=1}^\infty \Cov \left( \bm{1} \left( \Pi(\xi_{0}, \ldots, \xi_{r})=\pi \right), \bm{1} \left( \Pi(\xi_{j}, \ldots, \xi_{j+r})=\pi \right) \right)\;. $
\end{theorem}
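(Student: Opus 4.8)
The plan is to obtain the statement as a direct corollary of the multivariate short-range result, Theorem \ref{theorem:SRD_multivariate}, applied to the process $(V_\pi \Xb_t)_t$ at the threshold $\bu=\mathbf{0}$. Shifting the time index in Lemma \ref{lemma:matrix_form_OP} by one, the event $\{\Pi(\xi_{t-1},\ldots,\xi_{t+r-1})=\pi\}$ equals $\{V_\pi \Xb_t\leq\mathbf{0}\}$, so that the partial sums of ordinal-pattern indicators coincide with $\sum_{t=1}^{[n\tau]}\bm{1}(V_\pi\Xb_t\leq\mathbf{0})$ and $p(\pi)=\PP(V_\pi\Xb_1\leq\mathbf{0})$. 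As recorded after Lemma \ref{lemma:matrix_form_OP}, the transformed process $V_\pi\Xb_t=\sum_{j=0}^\infty A_j\mathbf{Z}_{t-j}$ with $A_j=V_\pi B_j$ is again a multivariate linear process, the matrices $B_j$ being the diagonal matrices in \eqref{eq.Bj_def}. It thus suffices to verify, for $(V_\pi\Xb_t)_t$, the two hypotheses of Theorem \ref{theorem:SRD_multivariate}: the summability $\sum_{j=0}^\infty\|A_j\|<\infty$ and Assumption \ref{assump.1}.

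For the summability, I would use that the operator norm of the diagonal matrix $B_j$ equals $\max_{0\leq k\leq r-1}|b_{j-r+1+k}|$, whence
\begin{equation*}
\sum_{j=0}^\infty\|A_j\|\leq\|V_\pi\|\sum_{j=0}^\infty\|B_j\|\leq\|V_\pi\|\sum_{j=0}^\infty\sum_{k=0}^{r-1}|b_{j-r+1+k}|\leq r\,\|V_\pi\|\sum_{\ell=0}^\infty|b_\ell|<\infty,
\end{equation*}
the last sum being finite by hypothesis; each coefficient $b_\ell$ is counted at most $r$ times, since it occupies a diagonal entry of exactly $r$ of the matrices $B_j$.

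To verify Assumption \ref{assump.1}, I would take $J=r-1$ and reuse the factorization in \eqref{expression_density_of_X_MAIN}. Summing the first $r$ coefficient matrices gives
\begin{equation*}
\sum_{j=0}^{r-1}A_j\mathbf{Z}_{t-j}=V_\pi\sum_{j=0}^{r-1}B_j\mathbf{Z}_{t-j}=V_\pi B\,\mathbf{Z}_{t,r},
\end{equation*}
with $B$ the lower-triangular matrix of \eqref{expression_density_of_X_MAIN} (with the $b_\ell$ replacing the $a_\ell$) and $\mathbf{Z}_{t,r}=(Z_t,\ldots,Z_{t+r-1})^\top$. Since $b_0\neq0$ the matrix $B$ is invertible, and $V_\pi$ is invertible by Lemma \ref{lemma:matrix_form_OP}, so $V_\pi B\in\mathrm{GL}(\RR,r)$. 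Taking $D=(V_\pi B)^{-1}$ then gives $D\sum_{j=0}^{r-1}A_j\mathbf{Z}_{t-j}=\mathbf{Z}_{t,r}$, a vector of independent random variables each distributed like $Z_1$, which has a continuous bounded Lebesgue density by assumption. Finally $b_0$ lies on the diagonal of $B_j$ for every $j\in\{0,\ldots,r-1\}$, so $B_j\neq\mathbf{0}$ and hence $A_j=V_\pi B_j\neq\mathbf{0}$ for these $j$, completing the verification of Assumption \ref{assump.1}.

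With both hypotheses established, Theorem \ref{theorem:SRD_multivariate} with $\bu=\mathbf{0}$ yields the functional limit, and substituting $\bm{1}(V_\pi\Xb_t\leq\mathbf{0})=\bm{1}(\Pi(\xi_{t-1},\ldots,\xi_{t+r-1})=\pi)$ together with $p(\mathbf{0})=p(\pi)$ converts the variance of Theorem \ref{theorem:SRD_multivariate} into the stated $\sigma_\pi^2$. The one point requiring care is not analytic but structural: one must ensure that left-multiplication by $V_\pi$ does not destroy the density and invertibility structure underlying Assumption \ref{assump.1}. This is precisely where the invertibility of $V_\pi$ combines with $b_0\neq0$ through the composite map $V_\pi B$, so that the first $r$ innovation vectors are still sent, by a linear isomorphism, onto the i.i.d.\ coordinates $Z_t,\ldots,Z_{t+r-1}$.
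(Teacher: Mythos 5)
Your proposal is correct and follows essentially the same route as the paper: both apply Theorem \ref{theorem:SRD_multivariate} to the transformed process $(V_\pi\Xb_t)_t$ at $\bu=\mathbf{0}$ and verify Assumption \ref{assump.1} via the invertibility of $V_\pi B$ coming from $b_0\neq 0$ and Lemma \ref{lemma:matrix_form_OP}. Your write-up is in fact slightly more explicit than the paper's, which leaves the summability check $\sum_j\|V_\pi B_j\|<\infty$ implicit.
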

For $\tau=1$, we obtain
\begin{equation}
    \sqrt{n}\Big(\hat{p}_n(\pi) - p(\pi)\Big)\xrightarrow{\D} \N(0,\sigma_\pi^2)\;. 
    \label{eq:asymptotic_normality_OP_SRD}
\end{equation}
\begin{proof}[Proof of Theorem \ref{theorem:convergence_OP_SRD}]
As a consequence of Lemma \ref{lemma:vectorization} and relation \eqref{ordinal_pattern_estimator}, the proof applies Theorem \ref{theorem:SRD_multivariate} to $(V_{\pi}\Xb_t)_{t\geq 1}\;.$ By assumption $b_0\neq 0.$ In \eqref{expression_density_of_X_MAIN} we have already verified Assumption \ref{assump.1} for the linear process $(\Xb_t)_t$ with $\Xb_t=\left( X_t, \ldots, X_{t+r-1} \right)^\top.$ Since $V_{\pi}$ is invertible, Assumption \ref{assump.1} also holds for the linear process $(V_{\pi}\Xb_t)_{t\geq 1}.$ All the assumptions needed for Theorem \ref{theorem:SRD_multivariate} are satisfied, concluding the proof. 
\end{proof}
We now derive the limit distribution of the estimator for ordinal pattern probabilities for a class of processes whose increments exhibit long-range dependence. 
\begin{theorem}[Long-Range Dependence]
\label{theorem:convergence_OP_LRD}
Let $(\xi_t)_{t\geq0}$ be a time series whose increments $X_t=\xi_t-\xi_{t-1}$ form a linear process $X_t=\sum_{j=0}^\infty b_jZ_{t-j}$ with $b_j \overset{j\to \infty}{\sim} j^{d-1}$ for $d\in (0,1/2)$. If $Z_1$ admits a density $f$ such that $f\in L^{\infty}(\RR)\cap C^1(\RR)$ and $f' \in L^\infty(\RR)$, with  finite fourth moment \(\EE[|Z_1|^4] < \infty\), then
\[
n^{ \frac{1}{2}-d} \left( \hat{p}_n(\pi) - p(\pi) \right) \overset{\D}{\longrightarrow} \N (0, \sigma_\pi^2),
\]
where \(\sigma_\pi^2 = \C_d (\nabla \tilde{p}(\mathbf{0}))^{\top} V_\pi \mathbf{E} V_\pi^{\top} \nabla \tilde{p}(\mathbf{0})\), with \(\C_d = \frac{\Gamma(d)^2 }{\Gamma(2d+2)\cos (\pi d)}\), \(\tilde{p}(\cdot) := \PP(V_\pi \Xb_1 \leq \cdot)\), and \(\mathbf{E}\) the \(r \times r\) matrix with all entries equal to 1.
\end{theorem}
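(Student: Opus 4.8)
The plan is to reduce the statement to the long-range dependence Theorem~\ref{thm.2}, applied to the transformed multivariate process $(V_\pi\Xb_t)_{t\ge1}$, in exactly the way Theorem~\ref{theorem:convergence_OP_SRD} was deduced from Theorem~\ref{theorem:SRD_multivariate}. By Lemma~\ref{lemma:matrix_form_OP} together with \eqref{ordinal_pattern_estimator}, we have $\hat p_n(\pi)=\frac1n\sum_{t=1}^n\bm1(V_\pi\Xb_t\le\mathbf{0})$ and $p(\pi)=\PP(V_\pi\Xb_1\le\mathbf{0})=\tilde p(\mathbf{0})$, so it suffices to establish a long-range dependence central limit theorem for the relative frequency of the event $\{V_\pi\Xb_t\le\mathbf{0}\}$. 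Writing $\Xb_t=\sum_{j\ge0}B_j\mathbf{Z}_{t-j}$ with the diagonal matrices $B_j$ of \eqref{eq.Bj_def} (Lemma~\ref{lemma:vectorization}), the process $V_\pi\Xb_t=\sum_{j\ge0}A_j\mathbf{Z}_{t-j}$ with $A_j:=V_\pi B_j$ is again a multivariate linear process, and I would invoke Theorem~\ref{thm.2} for it at the point $\bu=\mathbf{0}$.

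First I would check the structural hypotheses. Since $B_j$ is diagonal, $(A_j)_{ps}=(V_\pi)_{ps}\,b_{j-r+s}$, so $b_j\sim j^{d-1}$ yields $A_j\sim j^{d-1}V_\pi$ entrywise; hence $A_\infty=V_\pi$, which is invertible by Lemma~\ref{lemma:matrix_form_OP}, so $A_\infty\in\mathrm{GL}(\RR,r)$. The exponent $d\in(0,1/2)$ is given. Because $\mathbf{Z}_1=Z_{r-1}(1,\dots,1)^\top$, the innovations have covariance $\Sigma=\mathbf{E}$ and $\EE\|\mathbf{Z}_1\|^4=r^2\,\EE|Z_1|^4<\infty$. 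With these identifications, \eqref{eq:asymptotic_normality_LRD} produces the rate $n^{1/2-d}$ and asymptotic variance $\C_d\,(\nabla\tilde p(\mathbf{0}))^\top V_\pi\mathbf{E}V_\pi^\top\nabla\tilde p(\mathbf{0})=\sigma_\pi^2$, which is exactly the assertion.

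The substantial step, and the one I expect to be the main obstacle, is verifying the regularity condition \eqref{eq:LIP-condition_ORIGINAL} for the truncated laws $p_s(\cdot)=\PP(W_s\le\cdot)$, where $W_s:=\sum_{j=0}^sA_j\mathbf{Z}_{t-j}$, uniformly over $s\ge s_0$. The term $|p_s|$ is bounded by $1$; the first-order terms $\sum_i|\partial_ip_s|$ are controlled by the one-dimensional marginal densities of the components of $W_s$, while the second-order terms $\sum_{i,j}|\partial^2_{ij}p_s|$ are controlled by the bivariate marginal densities (for $i\neq j$) and by the derivatives of the one-dimensional marginal densities (for $i=j$). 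To bound these uniformly in $s$, I would peel off the first $r$ innovations, writing $W_s=V_\pi B\,\mathbf{Z}_{t,r}+R_s$, where $\mathbf{Z}_{t,r}=(Z_t,\dots,Z_{t+r-1})^\top$ is independent of the remainder $R_s=\sum_{j=r}^sA_j\mathbf{Z}_{t-j}$ and $B$ is the invertible triangular matrix of \eqref{expression_density_of_X_MAIN} (here the leading non-degeneracy $b_0\neq0$ enters, making $V_\pi B$ invertible). Conditionally on $R_s$, the vector $W_s$ then has the explicit density $\mathbf{w}\mapsto|\det(V_\pi B)|^{-1}\prod_{l=1}^{r}f\big(((V_\pi B)^{-1}(\mathbf{w}-R_s))_l\big)$; since $f,f'\in L^\infty(\RR)$, the marginal and bivariate densities above and their first derivatives are bounded by fixed multiples of $\|f\|_\infty$, $\|f'\|_\infty$ and the entries of $(V_\pi B)^{-1}$, uniformly in the conditioning value and, crucially, in $s$, because $V_\pi B$ does not depend on $s$. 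Averaging over $R_s$ preserves these bounds, so \eqref{eq:LIP-condition_ORIGINAL} holds with any $s_0\ge r-1$.

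The reason this step is delicate is that the coefficient vectors $A_j\mathbf{1}\sim j^{d-1}V_\pi\mathbf{1}$ become asymptotically parallel as $j\to\infty$, so no non-degeneracy can be extracted from large lags; the bivariate density bounds must instead come from the genuinely lower-triangular block $V_\pi B$ furnished by the first $r$ lags. It is worth noting that only first derivatives of densities appear, so $f'\in L^\infty(\RR)$ suffices and no bound on $f''$ is needed; continuity of $f$ guarantees that the conditional densities are well defined, and the moment condition $\EE|Z_1|^4<\infty$ is consumed inside Theorem~\ref{thm.2} rather than in this verification. Once \eqref{eq:LIP-condition_ORIGINAL} is established, the conclusion is an immediate specialization of \eqref{eq:asymptotic_normality_LRD}.
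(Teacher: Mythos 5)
Your proposal is correct and follows essentially the same route as the paper: apply Theorem~\ref{thm.2} to $(V_\pi\Xb_t)_{t\ge1}$ with $A_j=V_\pi B_j\sim j^{d-1}V_\pi$, $A_\infty=V_\pi$, $\Sigma=\mathbf{E}$, and verify the regularity condition via the invertible triangular block $B$ supplied by the first $r$ lags. The paper packages that verification as Proposition~\ref{prop:Lipschitzness_gradient} (an induction on $s$ with base case $s=r$, propagated by the convolution identity $p_s(\mathbf{x})=\int p_{s-1}(\mathbf{x}-A_s\mathbf{t})\,d\mathcal{G}(\mathbf{t})$), which is the same mechanism as your direct conditioning on the remainder $R_s$.
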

Since $\mathbf{E}=\bm{1}\bm{1}^\top$ with $\bm{1}=(1,\ldots,1)^\top,$ we can also write $((\nabla \tilde{p}(\mathbf{0}))^{\top} V_\pi \mathbf{E} V_\pi^{\top} \nabla \tilde{p}(\mathbf{0})=(\nabla \tilde{p}(\mathbf{0}))^{\top} V_\pi \bm{1})^2.$
\begin{proof}[Proof of Theorem \ref{theorem:convergence_OP_LRD}]
We apply Theorem \ref{thm.2}. 
Since $b_j\sim j^{d-1}$ and $(\mathbf{Z}_{j})_{j \in \mathbb{Z}}$ forms an i.i.d. sequence with variance $\mathbf{E}$, Lemma \ref{lemma:vectorization} shows that the multivariate linear process $\Xb_t = (X_t, \ldots, X_{t+r-1})^\top=\sum_{j=0}^\infty B_j \mathbf{Z}_{t-j}$ satisfies all the assumptions of Theorem \ref{thm.2}, and it inherits the decay characteristics in the sense that $B_j \sim j^{d-1} I_r$ as $j \to \infty,$ and $V_\pi I_r $ is invertible.  Moreover, in Proposition \ref{prop:Lipschitzness_gradient}, we show that in this case \eqref{eq:LIP-condition1} is satisfied, such that Theorem \ref{thm.2} can be applied for $\left(V_\pi \Xb_t\right)_{t\geq 1}$ and $\mathbf{u}=\mathbf{0}$, leading to
\begin{align}
 n^{1/2-d}( \hat{p}(\pi)-p(\pi))
    \xrightarrow{\D} \N\bigg(0,\frac{\Gamma(d)^2 }{\Gamma(2d+2)\cos (\pi d)}( \nabla \tilde{p}(\mathbf{0}))^{\top} V_\pi E V_\pi^\top \nabla \tilde{p}(\mathbf{0}) \bigg)\;.
\end{align}
\end{proof}
We conclude this section by providing two examples. In the first example, the underlying time series $(\xi_t)_{t\geq0}$ exhibits long-range dependence but the increment process $(X_t)_{t\geq 1}$ is short-range dependent. This allows us then to get the parametric $\sqrt{n}$ convergence rate.  The second example shows that Theorem \ref{theorem:convergence_OP_LRD} applies to a class of FARIMA processes. 
\begin{example}(Short-range dependent increments)
\label{example:degeneracy_FARIMA}
Let $d\in (0,1/2)$ and consider a linear process $\xi_t = \sum_{j=0}^\infty a_j Z_{t-j}$ with $a_j\sim j^{d-1}$ with $a_0\neq 0$ and $Z_1$ admitting a continuous and bounded probability density function. We now show that the increment process $X_t=\xi_t-\xi_{t-1}$ is a short-range dependent linear process. Indeed, it holds that
\begin{align*}
    X_t = \sum_{j=0}^\infty b_j Z_{t-j}\; \quad \text{with} \ b_j = \begin{cases}
        a_j- a_{j-1}\;, \quad j\geq 1 \\
        a_0\neq 0\;, \quad j=0\\
    \end{cases}\;.
\end{align*}
Moreover, $a_j \sim j^{d-1}$ as $j\to \infty$ and the mean value theorem imply $b_j \sim (d-1)j^{d-2}$ as $j\to \infty.$ Because of $d\in (0,1/2),$ $d-2 \in (-2,-3/2)$ and $(b_j)_{j \in \NN}$ is summable. Since $Z_1$ has continuous and bounded probability density function, Theorem \ref{theorem:convergence_OP_SRD} yields for any ordinal pattern $\pi$,
     \begin{equation*}
     \label{non_degeneracy_farima}
         \sqrt{n}\left(\hat{p}_n(\pi) - p(\pi) \right)\xrightarrow{\D} \N(0, \sigma_\pi^2) \;,
     \end{equation*}
     with $\sigma_\pi^2$ defined as in Theorem \ref{theorem:convergence_OP_SRD}.
\end{example}
\begin{example}(Long-range dependent increments)
We consider a FARIMA\((p,d,q)\) process with \(d < 1/2\), defined by \(\phi(B)X_t = \theta(B)(I-B)^{-d}Z_t\), where \(B^jX_t = X_{t-j}\). As shown by \citet{pipiras_taqqu_2017}, if \(\phi(z)\) and \(\theta(z)\) have no common roots, and \(\phi(z)\) has no zeros on the unit circle, \((X_t)_{t \geq 1}\) exhibits long-range dependence and can be expressed as:
\[
X_t = \sum_{j=0}^\infty b_j Z_{t-j}, \quad b_j \sim \frac{\theta(1)}{\phi(1)}\frac{j^{d-1}}{\Gamma(d)}.
\]
Then, the processes $\xi_t=\sum_{j=1}^t X_j$ satisfies the conditions of Theorem \ref{theorem:convergence_OP_LRD}.
\end{example}

\subsection{Some related works}\label{subsec:rel_work}
Empirical processes of linear models have been widely explored in the literature. Notable contributions include the work of \citet{Hsing}, who developed asymptotic expansions for the empirical process of long-range dependent linear processes, and \citet{giraitis1999central}, who established functional non-central limit theorems for linear processes with long-range dependence. These results, which rely on the reduction principle, were further extended by \citet{wu2003empirical}.

Limiting theorems for empirical processes in linear time series with short-range dependence have been studied by various authors. \citet{Furmanczyk} presented a central limit theorem for \( g(\mathbf{X}_j) \), where \( (\mathbf{X}_t)_{t\geq 1} \) is a multivariate linear process. Under mild conditions on the subordinated function \( g \) and the finite second moment of the innovations \( \mathbf{Z}_1 \), Furmańczyk concluded that 
\(\frac{1}{\sqrt{n}}\sum_{j=1}^{\lfloor n\tau \rfloor} g(\mathbf{X}_j)\Longrightarrow (B(\tau))_{\tau \in [0,1]}\). 
A similar setting was discussed in \citet{wu2002central}, who expanded on the results of \citet{Ho1997LimitTF}. In their Theorem 4.1, Ho and Hsing presented a central limit theorem for univariate linear processes with short-range dependence, imposing technical conditions and convergence similar to those in our Theorem \ref{theorem:convergence_OP_SRD}, specifically the condition \(\sum_j |a_j|<\infty\), where \(a_j\) are the coefficients of the process. \citet{wu2002central} obtained the same result under less stringent conditions. Additionally, in his Theorem 4, he derived the limiting distribution for empirical processes with the indicator function \( g(x_1, \ldots, x_p)=\bm{1}_{\{x_1\leq s_1, \ldots, x_p\leq s_p \}} - \PP( X_1\leq s_1, \ldots, X_p\leq s_p ) \), which directly applies to \eqref{ordinal_pattern_estimator}. Wu assumed that the characteristic function \( \phi_Z \) of \( Z_1 \) satisfies \( \int |\phi_Z(t)|^r\,dt < \infty \) for some \( r \in \mathbb{N} \). He also defined \( A_k(\delta) := \sum_{t=k}^{\infty} |a_t|^\delta \), and required that \( \sum_{n=1}^{\infty} \sqrt{A_n(\delta)/n} < \infty \) to derive \eqref{eq:asymptotic_normality_OP_SRD}. With the additional assumption \( A_n(\delta) = \mathcal{O}(n^{-q}) \) for some \( q > 1 \), he further derived \eqref{eq:uniform_asymptotic_normality_op}.
While Wu's approach imposes less stringent conditions on the innovations of the process (allowing, for example, discrete innovations), the only assumption we impose on the coefficients is \(\sum_j |a_j|<\infty\), which is weaker than Wu's condition \( \sum_{n=1}^{\infty} \sqrt{A_n(\delta)/n} < \infty \). Indeed, Wu's Lemma 1 shows \(\sum_j|a_j|<\sum_{n=1}^{\infty} \sqrt{A_n(\delta)/n}\). For instance, the imposed conditions in this paper are suitable for processes with summable coefficients and innovations with \( L^1 \) characteristic functions, which, by the well known inversion formula, would admit continuous and bounded densities, thereby fulfilling our conditions. Conversely, there are examples where Wu's conditions hold, such as when the process is \( m \)-dependent and the innovations are sufficiently regular. Another example are uniform densities, which are not continuous but still satisfy Wu's conditions.

For the case of short-range dependence, \citet{schnurr2017testing} establish the asymptotic distribution of the ordinal patterns estimator $\hat{p}_n(\pi)$ for 1-approximating functionals of the absolutely regular process $(Z_j)_{j\in \NN}$. 
  They assume that \((X_t)_{t \geq 1}\) is a 1-approximating functional of \((Z_{j})_{j \in \mathbb{N}}\), with summable mixing coefficients \((\beta_j)_{j \in \mathbb{N}}\) (i.e., \(\sum_{j} \beta_j < \infty\)), indicating that \((X_t)_{t\geq 1}\) exhibits short-range dependence. Furthermore, if the 1-approximating coefficients \((k_j)_{j \in \mathbb{N}}\) of $(X_t)_{t\geq 1}$ are such that \(\sum_{j} \sqrt{k_j} < \infty\), and the distributions of \(X_i - X_1\) are  Lipschitz continuous for \(i \in \{1, \ldots, r\}\) then, \eqref{eq:asymptotic_normality_OP_SRD} holds. 
  However, their result does not cover the full spectrum of short-range dependent processes. For instance, consider $X_t= \sum_{j=0} a_j Z_{t-j}$ with $a_0=1 $ and $a_n = \frac{1}{n^{\alpha}}$. For $\alpha>1$,  $(a_j)_{j \in \NN}\in \ell^1 \subset \ell^2$. Suppose the innovations are i.i.d. and $Z_1\sim f$ where $f$ is continuous and bounded. Under these conditions, the assumptions of Theorem \ref{theorem:convergence_OP_SRD} are satisfied for all $\alpha>1$, however $\sum_{j}\sqrt{k_j}<\infty$ only holds for $\alpha > 5/2\;.$ To see this, applying their Lemma 1, the 1-approximating sequence $(k_m)_{m \in \NN}$ for $X_t$, is given by $k_m= \left( \sum_{n=m+1}^\infty a_n^2 \right)^{1/2}\;.$ Therefore,
  \begin{align*}
k_m^2=\sum_{n=m+1}^\infty a_n^2 = \sum_{n=m+1}^\infty \frac{1}{n^{2\alpha}} \sim \int_{m}^\infty x^{-2\alpha }\, dx = \frac{1}{2\alpha-1} m^{1-2\alpha}\;.
  \end{align*}
Thus, the condition $\sum_{m=1}^\infty \sqrt{k_m} < \infty$  is satisfied only for $ 1-2\alpha< -4 $, that is, $\alpha > 5/2\;.$

 Lastly, \citet{Beran_Telkmann} demonstrated a reduction principle for multivariate empirical processes under long-range dependence. Their work provided a heuristic proof. Similar to their approach, and based on the martingale decomposition of Ho and Hsing, we reprove a pointwise multivariate reduction principle tailored to the ordinal patterns map. Importantly, our result does not hold uniformly but for each fixed point. This choice is motivated by the fact that, as discussed, the  estimator for the ordinal pattern probabilities \eqref{ordinal_pattern_estimator} is the empirical sum process evaluated at \(\mathbf{0}\). This simplification also allows us to eliminate the smoothness assumption on the innovations imposed by \citet{Beran_Telkmann}.

\section{Applications of ordinal patterns}\label{sec:appl}
 Electroencephalography (EEG) is a non-invasive method widely utilized in medical and scientific research to measure and analyze the brain's electrical activity. EEG records the electrical potentials generated by neural activity through electrodes placed on the scalp. An application of EEG is the identification and characterization of sleep stages.
According to the classical methodology of \citet{kales1968manual}, adult sleep is divided into six stages: wakefulness (W), stage 1 (S1), stage 2 (S2), stage 3 (S3), stage 4 (S4), and rapid eye movement (REM). Each stage corresponds to distinct patterns of brain activity observed in EEG recordings. 
The classification of sleep stages is based on the segmentation of EEG recordings into non-overlapping intervals, referred to as epochs (typically 30 seconds); see \citet{kales1968manual}. For each epoch, the corresponding sleep stage is determined by experts.
For a comprehensive overview of techniques used to analyze EEG data see \citet{ZHANG2024651} and \citet{Gonen}.

Ordinal patterns have recently been used for sleep stage classifications in EEG data. \citet{sinn2013segmentation} classified sleep-stages by computing the distribution of ordinal patterns in different parts of the time series, and then detect the locations of breaks by the Maximum Mean Discrepancy statistics of the sequence of ordinal pattern distributions. Another methodology that has been receiving great attention in the field is the analysis of EEG via permutation entropy (PeEn). Introduced by \citet{Bandt-Pompe}, PeEn is defined as the Shannon entropy of the distribution of ordinal patterns of a fixed length, i.e. as the quantity $\text{PeEn}(p(\pi^1), \ldots, p(\pi^{(r+1)!}))= -\sum_{i=1}^{r!} p(\pi^i) \log( p(\pi^i)) $, where $\{\pi^1,\ldots, \pi^{(r+1)!}\}=\mathcal{S}^r$ is the set of permutations of length $r+1$. The value of the PeEn is computed for each epoch, and considerable differences in value can be observed for different sleep stages. Thus, the complex dynamics of EEG is then measured by looking at the series of permutation entropies across non-overlapping epochs of EEG recordings.  A more refined analysis of the permutation entropy is due to  \citet{BergerSebastian}. Their data analysis of the PeEn with $r=2$ in one stage of EEG data shows that, for $\hat{p}$ being the ordinal patterns estimator \eqref{ordinal_pattern_estimator}, 
\begin{equation}
   \EE[\hat{p}(0,1,2)]\approx \EE[\hat{p}(2,1,0)]\quad \text{and}\quad \EE[\hat{p}(0,2,1)]\approx \EE[\hat{p}(2,1,0)]\approx \EE[\hat{p}(1,2,0)]\approx \EE[\hat{p}(1,0,2)]\;. \label{eq:symmetries_EEG} 
\end{equation}
On a population level, assuming stationary Gaussian observations is sufficient for equality, in the sense that if $(\xi_t)_{t\geq 0}$ is stationary Gaussian then
\begin{equation}
    p(0, 1, 2)=p(2,1,0)  \quad \text{and}\quad p(0,2,1)=p(2,1,0)=p(1,2,0)=p(1,0,2)\;.
    \label{eq:symmetries_gaussian} 
\end{equation}
Thus, in order to study the dynamics of PeEn, we can study the set of patterns $$\mathcal{T}=\{ (0,2,1),(2,1,0),(1,2,0),(1,0,2)\}\;.$$ An element of this set occurs with probability $q:= p(0,2,1)+p(2,1,0)+p(1,2,0)+p(1,0,2)$.
Likewise, the probability to observe the all-raising pattern $(0,1,2)$ or the all-falling pattern $(2,1,0)$ is $1-q$. It follows that the permutation entropy
$$\text{PeEn}=q\log \frac{4}{q} + (1-q) \log \frac{2}{1-q}\;$$
only depends on $q.$
The corresponding estimator of $q$ for an epoch \(\left(\xi_t, \ldots, \xi_{t+m+1}\right)\) of \(m+2\) observations is
\begin{align}
    \hat{q}_m:=& \frac{1}{m} \sum_{i=0}^{m-1} \sum_{\gamma \in \mathcal{T}} \bm{1} \left(\{\Pi(\xi_{t+i}, \xi_{t+i+1}, \xi_{t+i+2}) = \gamma \}\right)=\hat{p}_m(0,2,1)+\hat{p}_m(2,1,0)+\hat{p}_m(1,2,0)+\hat{p}_m(1,0,2)\;.
    \label{eq:turning_rate_1}
\end{align}
The probability $q = \sum_{\gamma \in \mathcal{T}} p(\gamma)$ is called \textit{turning rate} and the quantity $\hat{q}_m$ is called \textit{turning rate estimator}; see \citet{bandt2017crude, bandt2020order}.  Based on the turning rate, Bandt empirically identified different stages of sleep in EEG recordings that closely aligned with expert stage annotations.
For this, Bandt partitions the EEG data recording into consecutive blocks of a length corresponding to 30s, i.e. $(\xi_1,\ldots, \xi_{30s}), (\xi_{31s}, \ldots, \xi_{60s}),\ldots$ and  computes the turning rate estimate for each block via \eqref{eq:turning_rate_1}. 
The corresponding turning rate series  is plotted in Figure \ref{turning-rate-ins4}, where the  sleep cycles become immediately visible.
 \begin{figure}[ht]
     \centering
     \includegraphics[scale=0.25]{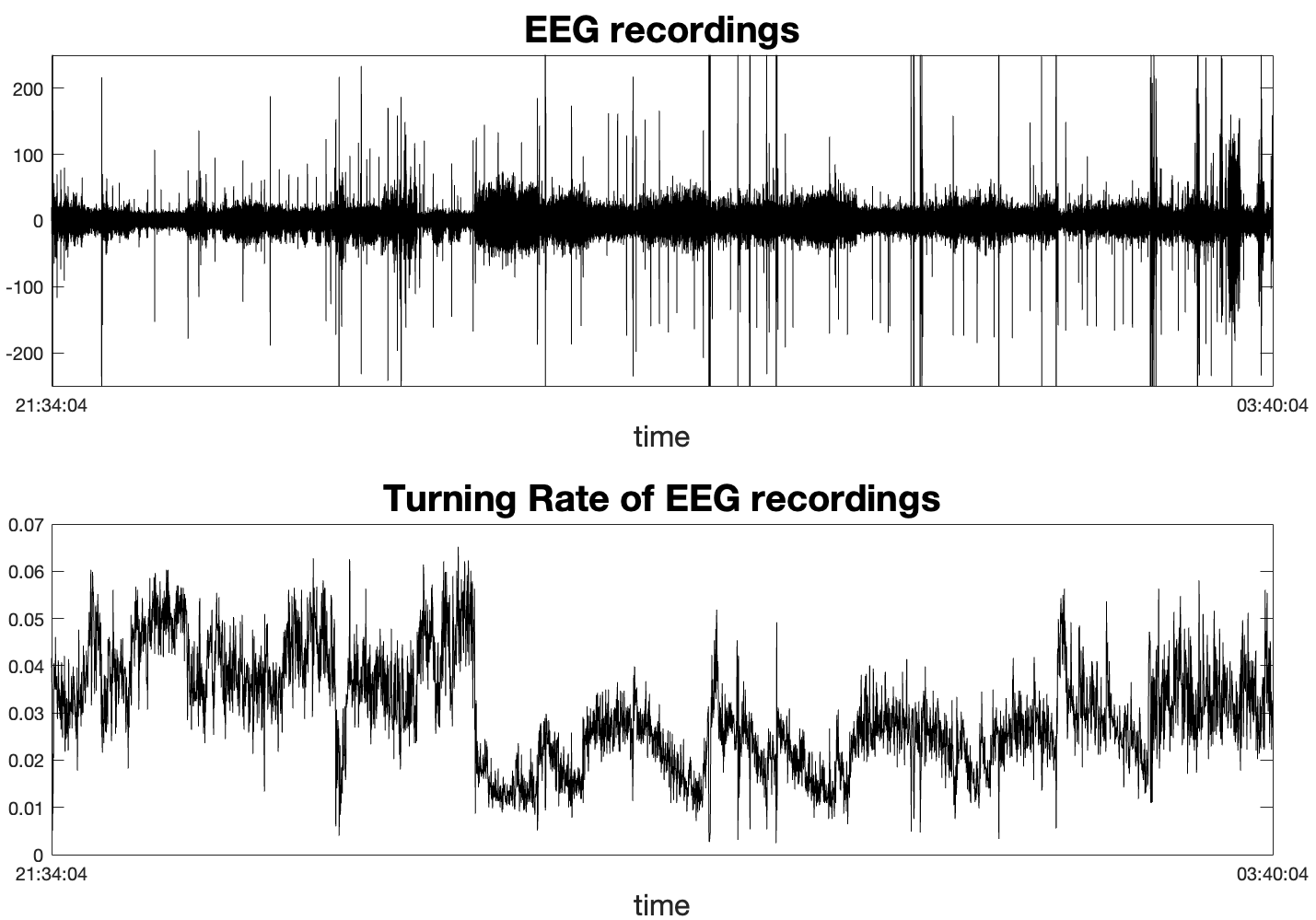}
     \caption{Top: EEG recordings of a healthy individual (4th patient from the CAP Sleep Database \citet{terzano2001atlas}), originally sampled at 512 Hz, resulting in a time series of 19,553,792 data points. Due to the high resolution and length, detailed features are difficult to discern.  Bottom: Corresponding turning rate series, where each data point represents an 8-second segment of the EEG recording. The sleep cycles are visible.}
     \label{turning-rate-ins4}
 \end{figure}
 
\citet{Kedem} showed that the turning rate of a time series is closely related to the centroid of its power spectrum. Most notably, if $\xi_0, \ldots, \xi_{n+1}$ is a zero-mean stationary process, the turning rate is equivalent to the well-established Zero-Crossing estimator.
For \(( \xi_t)_{t \geq 0}\) a stationary zero-mean Gaussian time series, using Lemma 1 from  \citet{sinn2011estimation} and the results of \citet{Kedem}, the following can be shown
\begin{equation}
    \cos{(\pi \EE[\hat{q}_m])} = \rho(1) = \omega^B\;,
\label{Spectral_centroid_Hergoltz}
\end{equation}
where $\rho(1)$ is the autocorrelation of $(X_t)_{t\geq 1}$ at lag 1 and \(\omega^B\) is a frequency of the spectrum of \((X_t)_{t \geq 1}\) known as the \textit{Spectral Centroid} or \textit{Barycenter of the spectrum}. This relationship, expressed in equation \eqref{Spectral_centroid_Hergoltz}, explains that a change in the autocorrelation \(\rho(1)\) corresponds to a change in the frequency that dominates the spectrum. 
To illustrate this concept, Figure \ref{turning-rate-ins4} shows how the turning rate tracks the dominant frequencies over time. For example, when working with signals dominated by specific frequency bands (such as EEG signals characterized by alpha, beta waves, etc.), variations in the turning rate reveal shifts in the most prominent frequency components. 

The analysis in \citet{bandt2017crude} emphasizes the visual inspection of turning rate plots across multiple channels, demonstrating that these plots exhibit significant overlap with the corresponding doctor-annotated sleep stage diagrams. Complementary to Bandt's analysis, in the next section we rigorously evaluate the likelihood of transitions between stages with formal statistical confidence. Specifically, we introduce a hypothesis test designed to detect changes in the mean of the turning rate series. In the case of Gaussian distributions, a significant shift in the mean directly corresponds to a change in the parameter $\rho(1)$, signaling a transition to a different sleep stage. 

\subsection{Change-point detection via turning rate analysis}
In this section, we address the problem of testing whether a given time series \(\xi_0, \ldots, \xi_{n+1}\) exhibits stationarity in its increments \(X_1, \ldots, X_{n+1}\), or whether there is a structural break in their distribution at some unknown point \(1 \leq k^\ast \leq n+1\). The core idea of the test is to analyze and compare the ordinal pattern distributions across different segments of the time series. Under the null hypothesis, the stationarity of the increments ensures the stationarity of their ordinal patterns. Consequently, any observed change in the ordinal pattern distribution must reflect a change in the underlying time series distribution. This insight motivates the turning rate estimator as a test statistic.

If the increments $X_1, \ldots, X_{n+1}$ form a stationary sequence, the estimator \eqref{eq:turning_rate_1} can be expressed in terms of $\mathbf{X}_t=(X_t, X_{t+1})^\top$ as 
  \begin{equation}
      \hat{q}_n -q = \frac{1}{n}\sum_{t=0}^{n-1} h( \mathbf{X}_t)\;, \quad h(\mathbf{X}_t):=  \sum_{\gamma \in \mathcal{T}} \bm{1}\left( V_{\gamma   }\Xb_t \leq \mathbf{0} \right)-q\;,
\label{turning_rate_estimator_version2}
  \end{equation}
where the matrices $V_{\gamma }$ for $\gamma \in \mathcal{T}$ are 
 \begin{align}
 \label{eq:the_four_matrices}
     V_{(0,2,1) }= \left(\begin{array}{cc}
          0&1  \\
         -1 &-1 
     \end{array} \right)\;, \quad   V_{ (2,0,1) }= \left(\begin{array}{cc}
          -1&-1  \\
         1&0 
     \end{array} \right)\;,  \quad   V_{(1,2,0) }= \left(\begin{array}{cc}
          -1&0  \\
         1&1 
     \end{array} \right)\;, \quad   V_{(1,0,2) }= \left(\begin{array}{cc}
          1&1  \\
         0&-1 
     \end{array} \right)\;.
 \end{align}
We will formalize the test in terms of ordinal patterns by identifying the change-points in the corresponding turning rate series. 
Given time series data $\xi_0, \ldots, \xi_{n+1}$, the \textit{turning rate series} is defined as the collection of $n_b = \left\lfloor \frac{n+2}{m+2} \right\rfloor$ random variables $\hat{q}_{1,m}, \ldots, \hat{q}_{n_b,m}$, where each $\hat{q}_{j,m}$ represents the turning rate (relation~\eqref{eq:turning_rate_1}) computed over non-overlapping, consecutive blocks of length $m+2$ extracted from $\xi_0, \ldots, \xi_{n+1}$. Formally, $\hat{q}_{j,m}$ is defined by
\begin{equation}
    \label{turning_rate_block}
    \hat{q}_{j,m} = \frac{1}{m} \sum_{i=0}^{m-1} \sum_{\gamma \in \mathcal{T}} \bm{1} \Big( \Pi\big(\xi_{(j-1)(m+2)+i}, \xi_{(j-1)(m+2)+i+1}, \xi_{(j-1)(m+2)+i+2}\big) = \gamma \Big)\;, \quad \text{for } j = 1, \ldots, n_b\;.
\end{equation}
We consider the following test problem:
\begin{align}
    & \mathcal{H}_0:\, \EE[\hat{q}_{1,m}]=\cdots=\EE[\hat{q}_{n_b,m}] \quad \text{vs} \nonumber \\
  & \mathcal{H}_1: \, \EE[\hat{q}_{1,m}]=\cdots=\EE[\hat{q}_{\lfloor n_b  \tau^\star  \rfloor, m}]\neq \EE[\hat{q}_{\lfloor n_b  \tau^\star  \rfloor +1,m}]=\cdots=\EE[\hat{q}_{  n_b,m}]   \quad \text{for some } \tau^\star \in [0,1] \;. 
   \label{test2}
\end{align}
The test problem (\ref{test2}) is framed akin to a conventional mean change-point detection problem. 
For this purpose, we can employ the CUSUM statistic
\begin{align}
\label{cusum}
    \max\limits_{k=1, \ldots, n_b -1}\left|\sum\limits_{j=1}^{k}\hat{q}_{j,m}-\frac{k}{n_b}\sum\limits_{j=1}^{n_b}\hat{q}_{j,m}\right|\;,
\end{align}
as our test statistic. The asymptotic distribution of (\ref{cusum}) is typically obtained through an application of the continuous mapping theorem to
\begin{align}
   \frac{m}{\sqrt{n}} \sum\limits_{j=1}^{\left\lfloor n_b \tau \right\rfloor}(\hat{q}_{j,m} -q), \, \text{as } \, n\rightarrow \infty\;,
    \label{averages_turning_rate2}
\end{align}
where $\tau \in [0,1]$. The convergence result for (\ref{averages_turning_rate2}) is detailed in Theorem \ref{theorem:asymptotic_turning_rates_series} of Appendix \ref{appendix:turning_rate}. The asymptotic limit of the CUSUM statistics is the content of the next theorem. 
\begin{theorem}
\label{theorem:cusum_statistics_limit} 
Let $\xi_0, \ldots, \xi_{n+1}$ be a time series whose increments $X_1,\ldots, X_{n+1}$ form a linear process $X_t=\sum_{j=0}^\infty a_j Z_{t-j}$ with $\sum_{j=0}^\infty |a_j|< \infty$ and $Z_1$ admitting a continuous and bounded density and finite second moment $\EE \left[|Z_1|^2\right]<\infty$.  Consider the turning rate generated by blocks of size $m+2,$ and corresponding number of blocks $n_b=\lfloor n/m\rfloor$. If $m/\sqrt{n}\to \infty$, then
\[
\frac{m}{\sqrt{n}}\max_{k=1, \ldots, n_b-1} \left| \sum_{j=1}^k \hat{q}_{j,m} - \frac{k}{n_b} \sum_{j=1}^{n_b} \hat{q}_{j,m} \right| \xrightarrow{\D} \sigma \sup_{\tau \in [0,1]} \left| B(\tau) - \tau B(1) \right|\;,\,\, \text{as }\, n\to \infty\;,
\]
with variance  
\begin{equation}
    \sigma ^2 =  \Var \left( \sum_{\gamma \in \mathcal{T}} \bm{1}\left( \Pi(\xi_0,\xi_1,\xi_2)=\gamma   \right) \right) + 2 \sum_{j=1}^\infty \Cov \left( \sum_{\gamma \in \mathcal{T}} \bm{1}\left( \Pi(\xi_0,\xi_1,\xi_2)=\gamma\right), \sum_{\gamma \in \mathcal{T}} \bm{1}\left( \Pi(\xi_{j},\xi_{j+1},\xi_{j+2})=\gamma\right) \right)\;.
    \label{eq:variance_cusum}
\end{equation}
\end{theorem}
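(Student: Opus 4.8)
The plan is to write the CUSUM statistic as a continuous functional of the block-aggregated partial-sum process and then to push the functional central limit theorem for that process, which is exactly the convergence of \eqref{averages_turning_rate2} supplied by Theorem~\ref{theorem:asymptotic_turning_rates_series}, through the continuous mapping theorem. First I would set $Y_t := \sum_{\gamma\in\mathcal{T}}\bm{1}(\Pi(\xi_t,\xi_{t+1},\xi_{t+2})=\gamma)-q$, so that under stationarity of the increments $(Y_t)$ is a bounded, stationary sequence whose long-run variance is precisely the $\sigma^2$ of \eqref{eq:variance_cusum}, and $\hat q_{j,m}-q=\tfrac1m\sum_{i=0}^{m-1}Y_{(j-1)(m+2)+i}$. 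I would then define
\[
W_n(\tau) := \frac{m}{\sqrt n}\sum_{j=1}^{\lfloor n_b\tau\rfloor}\bigl(\hat q_{j,m}-q\bigr) = \frac{1}{\sqrt n}\sum_{j=1}^{\lfloor n_b\tau\rfloor}\sum_{i=0}^{m-1}Y_{(j-1)(m+2)+i},\qquad \tau\in[0,1],
\]
and invoke Theorem~\ref{theorem:asymptotic_turning_rates_series} to get $W_n\overset{\D[0,1]}{\Longrightarrow}\sigma B$. The conditions $m/\sqrt n\to\infty$ and $n_b=\lfloor n/m\rfloor\to\infty$ enter at that stage: they guarantee infinitely many asymptotically uncorrelated blocks and render the two pattern indices skipped between consecutive blocks negligible (their contribution is a sum of $O(n/m)$ bounded terms, of order $O_P(1/\sqrt m)$ after the $1/\sqrt n$ normalisation), so that $W_n$ is asymptotically the ordinary partial-sum process of $(Y_t)$, whose functional CLT follows from the short-range machinery of Theorem~\ref{theorem:convergence_OP_SRD} applied to the turning-rate function $\bx\mapsto\sum_{\gamma\in\mathcal{T}}\bm{1}(V_\gamma\bx\le 0)$.

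Next, because the centering constant $q$ cancels in the CUSUM difference,
\[
\sum_{j=1}^{k}\hat q_{j,m}-\frac{k}{n_b}\sum_{j=1}^{n_b}\hat q_{j,m}
=\sum_{j=1}^{k}(\hat q_{j,m}-q)-\frac{k}{n_b}\sum_{j=1}^{n_b}(\hat q_{j,m}-q),
\]
I would rewrite the statistic, using $W_n(k/n_b)=\tfrac{m}{\sqrt n}\sum_{j=1}^{k}(\hat q_{j,m}-q)$ and $W_n(1)=\tfrac{m}{\sqrt n}\sum_{j=1}^{n_b}(\hat q_{j,m}-q)$, as
\[
\frac{m}{\sqrt n}\max_{k=1,\dots,n_b-1}\Bigl|\sum_{j=1}^{k}\hat q_{j,m}-\frac{k}{n_b}\sum_{j=1}^{n_b}\hat q_{j,m}\Bigr|
=\max_{k=1,\dots,n_b-1}\Bigl|W_n\bigl(\tfrac{k}{n_b}\bigr)-\tfrac{k}{n_b}\,W_n(1)\Bigr|.
\]
The right-hand side is, up to discretisation, the bridge functional $\phi(f):=\sup_{\tau\in[0,1]}|f(\tau)-\tau f(1)|$ evaluated at $W_n$. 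Since Skorokhod convergence to a path that is continuous at $\tau=1$ forces uniform convergence, under which both $f\mapsto f(1)$ and the supremum are continuous, $\phi$ is continuous at every such $f$; as $\sigma B$ has continuous paths almost surely, the continuous mapping theorem gives $\phi(W_n)\xrightarrow{\D}\phi(\sigma B)=\sigma\sup_{\tau\in[0,1]}|B(\tau)-\tau B(1)|$.

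The remaining point, which I regard as the main technical obstacle, is to identify the discrete maximum with $\phi(W_n)$. Because $W_n$ is piecewise constant on the intervals $[\,k/n_b,(k+1)/n_b)$ while $\tau\mapsto\tau W_n(1)$ is affine, the map $\tau\mapsto|W_n(\tau)-\tau W_n(1)|$ is piecewise affine and attains its supremum at the grid points $k/n_b$ (including one-sided limits); comparing the continuous supremum with the discrete maximum therefore costs at most the largest jump
$\max_{k}\bigl|W_n(\tfrac{k}{n_b})-W_n(\tfrac{k-1}{n_b})\bigr|=\max_{k}\tfrac{1}{\sqrt n}\bigl|\sum_{i=0}^{m-1}Y_{(k-1)(m+2)+i}\bigr|$ of $W_n$. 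Since $W_n$ converges in $\D[0,1]$ to the \emph{continuous} process $\sigma B$, its maximal jump vanishes in probability, so the discrete maximum equals $\phi(W_n)+o_P(1)$. Combining this with the continuous-mapping limit yields the asserted convergence to $\sigma\sup_{\tau\in[0,1]}|B(\tau)-\tau B(1)|$; everything outside the passage from the step process to its continuous bridge functional is bookkeeping once Theorem~\ref{theorem:asymptotic_turning_rates_series} is in hand.
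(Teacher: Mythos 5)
Your proposal is correct and follows essentially the same route as the paper: the paper obtains Theorem \ref{theorem:cusum_statistics_limit} by applying the continuous mapping theorem (with the Brownian-bridge functional) to the functional convergence of $\frac{m}{\sqrt{n}}\sum_{j=1}^{\lfloor n_b\tau\rfloor}(\hat{q}_{j,m}-q)$ established in Theorem \ref{theorem:asymptotic_turning_rates_series}, exactly as you do. Your extra care in passing from the discrete maximum over $k=1,\ldots,n_b-1$ to the continuous supremum functional (via the vanishing maximal jump of the step process $W_n$, since the Skorokhod limit $\sigma B$ is continuous) is a detail the paper leaves implicit, and you handle it correctly.
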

\begin{remark}
If \(X_1, \ldots, X_{n+1}\) forms a Gaussian time series, test \eqref{test2} becomes equivalent to testing for a change in the autocorrelation parameter \(\rho(1)\), as shown in \eqref{Spectral_centroid_Hergoltz}. In this context, a significant shift in the mean of the turning rate series directly corresponds to a change in \(\rho(1)\). For EEG time series, such a shift is indicative of a transition between different sleep stages.  
\end{remark}

\subsection{Variance estimation}

The distribution of the stochastic limit $\sup_{\tau \in [0,1]} |B(\tau) - \tau B(1)|$ can be approximated through Monte-Carlo simulations. However, the long-run variance in \eqref{eq:variance_cusum} is generally unknown and requires estimation. 
A standard approach to estimating $\sigma^2$ involves kernel-based methods. However, these methods are sensitive to the choice of the kernel bandwidth, with data-dependent bandwidth selection often leading to nonmonotonic statistical power, as shown in \citet{vogelsang1998} and \citet{crainiceanu2007spectral}.
To address these limitations, we estimate $\sigma^2$ using the self-normalization technique introduced by \citet{shao2010change_points} and \citet{shao:2010}; see also \citet{betken2016testing}. 

Taking the possibility  of a structural change at time $k$ into consideration,
 a normalization for the two-sample CUSUM statistic is obtained by combining the values of empirical variances computed with respect to  the separate  samples $\hat{q}_{1,m}, \ldots, \hat{q}_{k,m}$ and  $\hat{q}_{k+1,m}, \ldots, \hat{q}_{n_b,m}$.
Accordingly, we define
\begin{align}\label{V_{k,n}}
 V^2_{k, n_b}:=\frac{1}{n_b}\sum_{t=1}^k S_t^2(1,k)+\frac{1}{n_b}\sum_{t=k+1}^n S_t^2(k+1,n_b)
\end{align}
with
\begin{align*}
S_{t}(j, k):=\sum_{h=j}^t\left(\hat{q}_{h,m}-\bar{q}_{j, k}\right),  \quad  
\bar{q}_{j, k}:=\frac{1}{k-j+1}\sum_{t=j}^k\hat{q}_{t,m},
\end{align*}
as  normalizing sequence
and  we  define the self-normalized CUSUM statistic  by
\begin{align}
SC_{n_b}:=\max_{1\leq k\leq n_b-1}\frac{\left|\sum_{j=1}^k\hat{q}_{j,m}-\frac{k}{n}\sum_{j=1}^{n_b}\hat{q}_{j,m}\right|}{V_{k,n_b}^2} \;.
\label{eq:SC_n}
\end{align} 
For testing the hypothesis of a change in the level of the turning rate series on the basis of the self-normalized CUSUM statistic $SC_{n_b}$, we need to set critical values for a corresponding hypothesis test. For this purpose, we establish the asymptotic distribution of the statistic as a corollary of
Theorem \ref{theorem:convergence_OP_SRD}:
\begin{corollary}
\label{self_normalized_cusum_turning_rate}   Assume that $\xi_0, \ldots, \xi_{n+1}$ satisfies the assumptions of Theorem \ref{theorem:cusum_statistics_limit}.
Consider the turning rate series generated by blocks of size \( m+2 \) and \( n_b  \) blocks, denoted by  $\hat{q}_{1,m}, \ldots, \hat{q}_{n_b,m}$.  If $m/\sqrt{n}\to \infty$, then as $n\to \infty$ 
\begin{align}
   SC_{n_b} \overset{\D}{\longrightarrow} \sup_{\tau \in [0,1]} \frac{|B(\tau)-\tau B(1)|}{ \left[\int_0^\tau \left( B(s) - \frac{s}{ \tau} B(\tau) \right)^2 \, ds+ \int_{\tau}^1 \left( B(s) - B(\tau) -\frac{s-\tau}{1-\tau} \left( B(1)-B(\tau) \right)\right)^2 \, ds\right]^{1/2}}\;.
   \label{limit_of_self_normalized_cusum}
\end{align}
\end{corollary}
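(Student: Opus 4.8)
The plan is to follow the self-normalization paradigm of \citet{shao2010change_points} and \citet{betken2016testing}, whose whole purpose is to make the unknown long-run variance $\sigma^2$ of \eqref{eq:variance_cusum} cancel between numerator and normalizer, so that the limit is distribution-free. The only analytic input I need is the functional central limit theorem for the partial-sum process of the turning rate series that already underpins Theorem \ref{theorem:cusum_statistics_limit} (made explicit as the convergence of \eqref{averages_turning_rate2} in Theorem \ref{theorem:asymptotic_turning_rates_series}). Writing
\[
U_{n_b}(\tau) := \frac{m}{\sqrt{n}} \sum_{j=1}^{\lfloor n_b \tau \rfloor} (\hat{q}_{j,m} - q), \qquad \tau \in [0,1],
\]
this input reads $U_{n_b} \overset{\D[0,1]}{\Longrightarrow} \sigma B$ in the Skorokhod space $\D[0,1]$, and everything else is an application of the continuous mapping theorem to $U_{n_b}$.

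First I would rewrite both the CUSUM numerator and the normalizer $V_{k,n_b}=(V^2_{k,n_b})^{1/2}$ as functionals of $U_{n_b}$. For $k=\lfloor n_b\tau\rfloor$ the centering by $q$ drops out and the numerator becomes
\[
\frac{m}{\sqrt{n}} \left( \sum_{j=1}^{k}\hat{q}_{j,m} - \frac{k}{n_b}\sum_{j=1}^{n_b}\hat{q}_{j,m}\right) = U_{n_b}(\tau) - \frac{\lfloor n_b\tau\rfloor}{n_b}\,U_{n_b}(1) \longrightarrow \sigma\big(B(\tau)-\tau B(1)\big).
\]
The same centering gives $\tfrac{m}{\sqrt n} S_t(1,k) = U_{n_b}(\tfrac{t}{n_b})-\tfrac{t}{k}U_{n_b}(\tfrac{k}{n_b})$, and analogously for $S_t(k+1,n_b)$, so that with $n_b^{-1}\approx m/n$,
\[
\frac{m^{2}}{n}\,V_{k,n_b}^{2}
= \frac{1}{n_b}\sum_{t=1}^{k}\Big(U_{n_b}(\tfrac{t}{n_b})-\tfrac{t}{k}\,U_{n_b}(\tfrac{k}{n_b})\Big)^{2}
+ \frac{1}{n_b}\sum_{t=k+1}^{n_b}\Big(U_{n_b}(\tfrac{t}{n_b})-U_{n_b}(\tfrac{k}{n_b})-\tfrac{t-k}{n_b-k}\big(U_{n_b}(1)-U_{n_b}(\tfrac{k}{n_b})\big)\Big)^{2}.
\]
These are Riemann sums converging to $\sigma^2\int_0^\tau(B(s)-\tfrac{s}{\tau}B(\tau))^2\,ds$ and $\sigma^2\int_\tau^1(\cdots)^2\,ds$; note that the factor $\sigma$ appears once in the numerator and once in $(\tfrac{m^2}{n}V_{k,n_b}^2)^{1/2}=\tfrac{m}{\sqrt n}V_{k,n_b}$.

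I would then package this into a single scale-invariant functional and invoke the continuous mapping theorem. Define $\Phi$ on $C[0,1]$ by
\[
\Phi(x) = \sup_{\tau\in(0,1)} \frac{\left|x(\tau)-\tau x(1)\right|}{\left[\int_0^\tau \Big(x(s)-\frac{s}{\tau}x(\tau)\Big)^{2}\,ds + \int_\tau^1\Big(x(s)-x(\tau)-\frac{s-\tau}{1-\tau}\big(x(1)-x(\tau)\big)\Big)^{2}\,ds\right]^{1/2}}.
\]
By the previous step $SC_{n_b}=\Phi(U_{n_b})+o_{\PP}(1)$, where the $o_{\PP}(1)$ absorbs the flooring errors, the Riemann-sum discretization, and the replacement of the discrete maximum over $k$ by the supremum over $\tau$. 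Since $\Phi$ is homogeneous of degree zero, $\Phi(\sigma x)=\Phi(x)$ for $\sigma>0$, the continuous mapping theorem gives $\Phi(U_{n_b})\overset{\D}{\longrightarrow}\Phi(\sigma B)=\Phi(B)$, which is precisely the right-hand side of \eqref{limit_of_self_normalized_cusum}; the vanishing of $\sigma$ is exactly the gain of self-normalization.

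The hard part will be the joint, uniform-in-$\tau$ control needed to legitimize the continuous mapping step, and the continuity of $\Phi$ at the limit. Two points require care. First, the Riemann-sum approximation of the normalizer must hold jointly with the numerator and uniformly in $\tau$, so that the pair (numerator process, normalizer process) converges as an element of a suitable product space; this follows from $U_{n_b}\Rightarrow\sigma B$ and the a.s. continuity of the limit, but the passage from $\tfrac{t}{k}$ to $\tfrac{s}{\tau}$ and the shrinking segments near $\tau\in\{0,1\}$ must be handled explicitly. Second, I must verify that $\Phi$ is continuous at $\sigma B$ almost surely: the delicate issue is positivity of the denominator, which holds because the integrals of the squared bridge-type processes are a.s. strictly positive for Brownian motion, together with the boundary behaviour as $\tau\downarrow0$ and $\tau\uparrow1$, where numerator and denominator vanish simultaneously but their ratio stays bounded, so the supremum is attained on a compact subinterval of $(0,1)$. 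Once continuity of $\Phi$ at the limit law is secured, the continuous mapping theorem delivers the claim.
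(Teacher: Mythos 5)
Your proposal is correct and follows essentially the same route as the paper: express both the CUSUM numerator and the normalizer $V_{k,n_b}^2$ as functionals of the partial-sum process $W_{n_b}(s)=\tfrac{m}{\sqrt n}\sum_{j=1}^{\lfloor n_b s\rfloor}(\hat q_{j,m}-q)$, invoke the functional convergence $W_{n_b}\Rightarrow\sigma B$ from Theorem \ref{theorem:asymptotic_turning_rates_series}, and conclude by the continuous mapping theorem with $\sigma$ cancelling by scale invariance. You are in fact somewhat more careful than the paper in flagging the continuity of the ratio functional at the limit and the boundary behaviour near $\tau\in\{0,1\}$, which the paper's proof leaves implicit.
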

The proof can be found in Appendix \ref{appendix:turning_rate}.

\section{Simulation studies}\label{sec:sim}
We complement the theoretical results with simulation studies and an application to real data. We illustrate Corollary \ref{self_normalized_cusum_turning_rate} using two simulated MA(1) time series: one without a change-point and one where the moving average parameter changes, corresponding to a shift at lag 1 of the autocorrelation. Further, we examine the power of the test on simulated AR(1) time series with lengths of 500, 1000, and 2000, introducing breaks at 1/4, 1/3, and 1/2 of the data under various non-normally distributed innovations. Finally, we apply the test to EEG data to detect transitions between two sleep stages. 
\begin{example}[Setting A]
   We simulated two MA(1) time series, defined as \( X_t = Z_t + \theta Z_{t-1} \), each consisting of 5000 data points with normally distributed innovations (see Figure \ref{Fig:histograms}). For the first series, the MA parameter \( \theta \) is set to \( 0.4 \). In the second time series, $\theta$ changes from $0.4$ to $0.7$ after 2500 observations. The value $SC_{n_b}$ is computed for both series.  Figure \ref{Fig:histograms} shows the distribution of the test statistics based on 1000 repetitions. 
 \begin{figure}[htbp]
    \centering
    \begin{minipage}{0.45\textwidth}
        \centering
        \includegraphics[width=\textwidth]{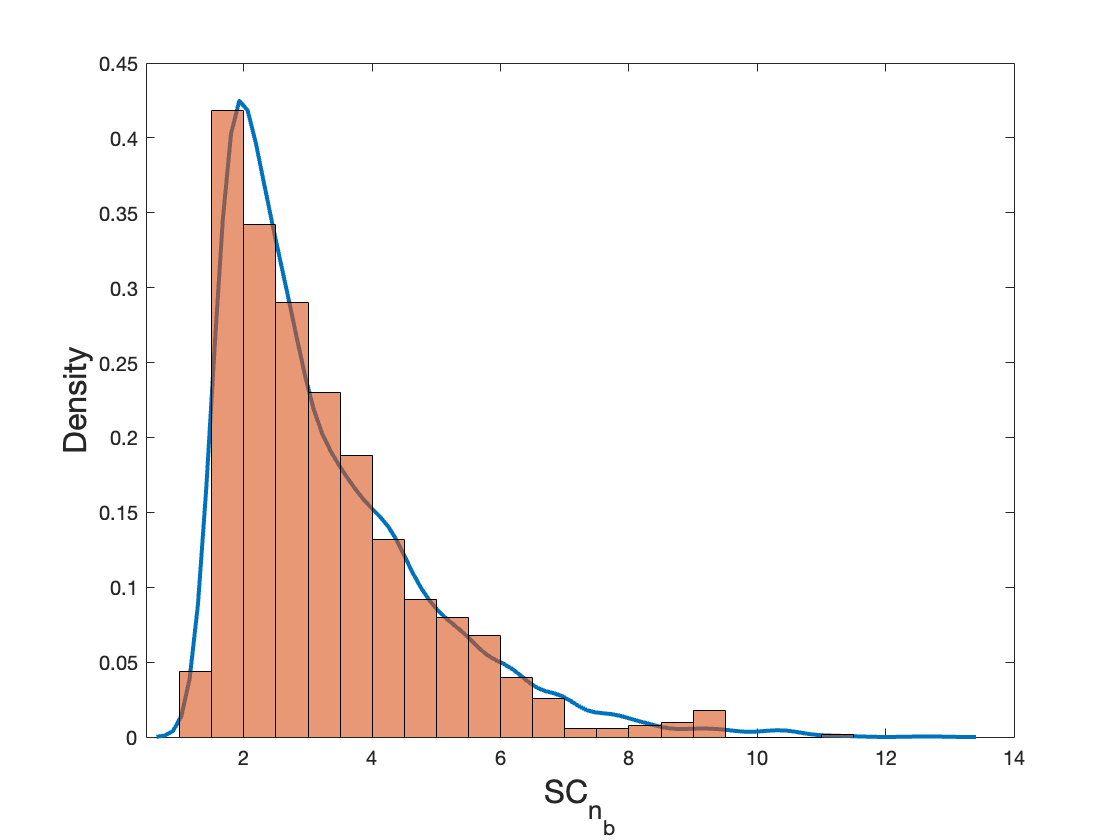} 
    \end{minipage}
    \hfill
    \begin{minipage}{0.45\textwidth}
        \centering
        \includegraphics[width=\textwidth]{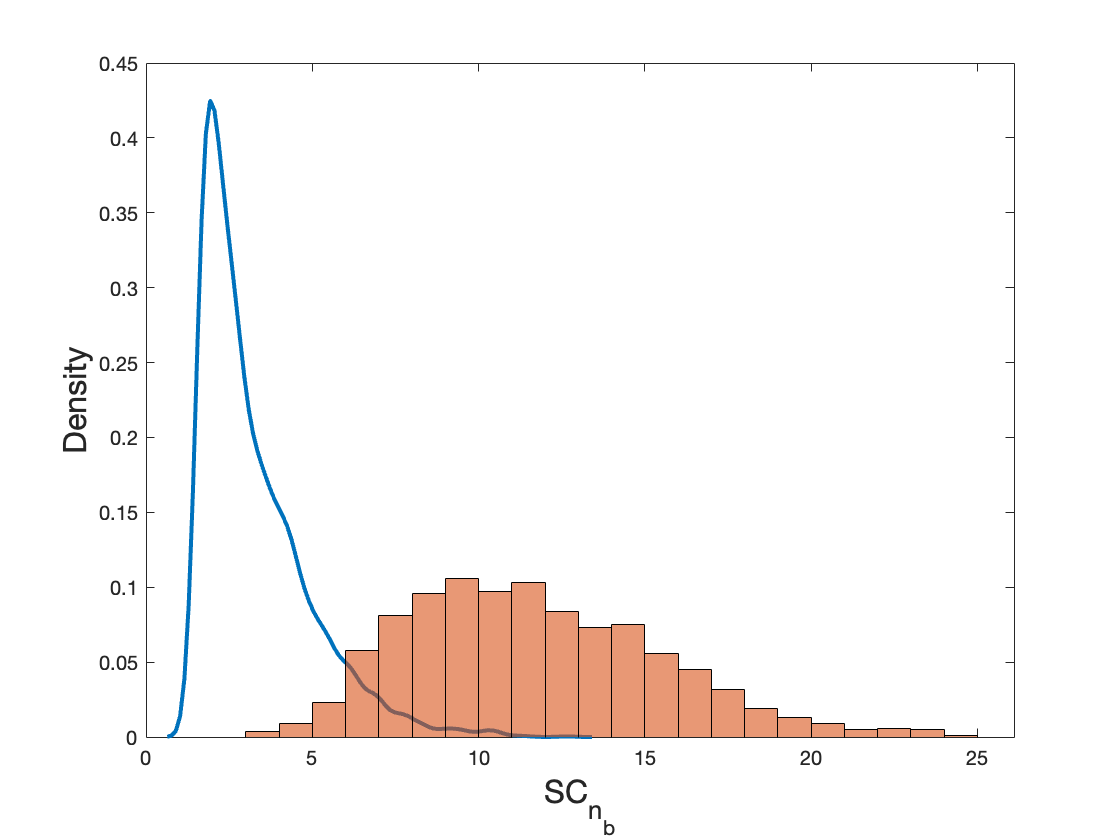} 
    \end{minipage}
     \caption{Histogram of $SC_{n_b}$ for $n=5000$ and 1000 simulations of MA(1)  without (left) and with (right) change of $\rho(1)$.  In the right plot, the autoregressive parameter changes from $\theta=0.4$ to $\theta=0.7$ after 50\% of the observations. 
     Under the null hypothesis the estimated 0.95 quantile is 6.335. }
     \label{Fig:histograms}
\end{figure}
\end{example}
\begin{example}[Setting B]
We simulate an AR(1) process $(X_t)_{t\geq 1}$ of the form
\begin{equation*}
    X_t = \begin{cases}
        \phi_1 X_{t-1} + Z_t \quad t=1, \ldots,\lfloor (n+1)\tau \rfloor\;,\\
        \phi_2 X_{t-1} + Z_t \quad t=\lfloor (n+1) \tau \rfloor\ +1, \ldots, n+2
    \end{cases}
\end{equation*}
    and set $\xi_t=\sum_{i=1}^t X_i$. For $h=\phi_2-\phi_1$ and $\phi_1=0.4$, Figure \ref{fig:power_test} provides the frequency of detected changes for the  values $h\in \{0.1,0.2,0.3,0.4,0.5\}$.
Further, for fixed $h=0.4$, we consider time series of lengths 500, 1000 and 2000 data points. We analyze  breaks 
after a fraction $\tau \in \{ 1/10, 1/4, 1/2 \}$ of the data for different innovation distributions. The frequency of detected changes are summarized in Table \ref{tab:table1}.

\begingroup
\renewcommand{\arraystretch}{1.2} 
\begin{table}[ht]
\centering
\setlength{\tabcolsep}{5pt} 
\begin{tabular}{cccccccccccc}
\thickhline
\multicolumn{4}{c}{$n=500$} & \multicolumn{4}{c}{$n=1000$} & \multicolumn{4}{c}{$n=2000$} \\ 
break & $\mathcal{N}(0,1)$ & $t_2$ & Lap$(0,4)$ &
break & $\mathcal{N}(0,1)$ & $t_2$ & Lap$(0,4)$ &
break & $\mathcal{N}(0,1)$ & $t_2$ & Lap$(0,4)$ \\
\hline
50  & 10.2 & 11.4 & 7.8 & 100  & 13.5  & 16.1   & 13.6  & 200  & 29.4 & 31 & 30.5 \\
125 & 40   & 52.3  & 46  & 250  & 69 & 75 & 69.2 & 500  & 91.6 & 95.1 & 91.2 \\
250 & 70   & 74.8 & 73.2 & 500  & 93.2 & 94.8   & 94.3 & 1000 & 99.8 & 99.7 & 99.5   \\
\thickhline
\end{tabular}
\caption{\label{tab:table1} Frequencies of detected changes in an AR(1) time series and for $h=\phi_2-\phi_1=0.4$.  }
\end{table}
\endgroup

\begin{figure}[htbp]
    \centering
    \begin{minipage}{0.45\textwidth}
        \centering
        \includegraphics[width=\textwidth]{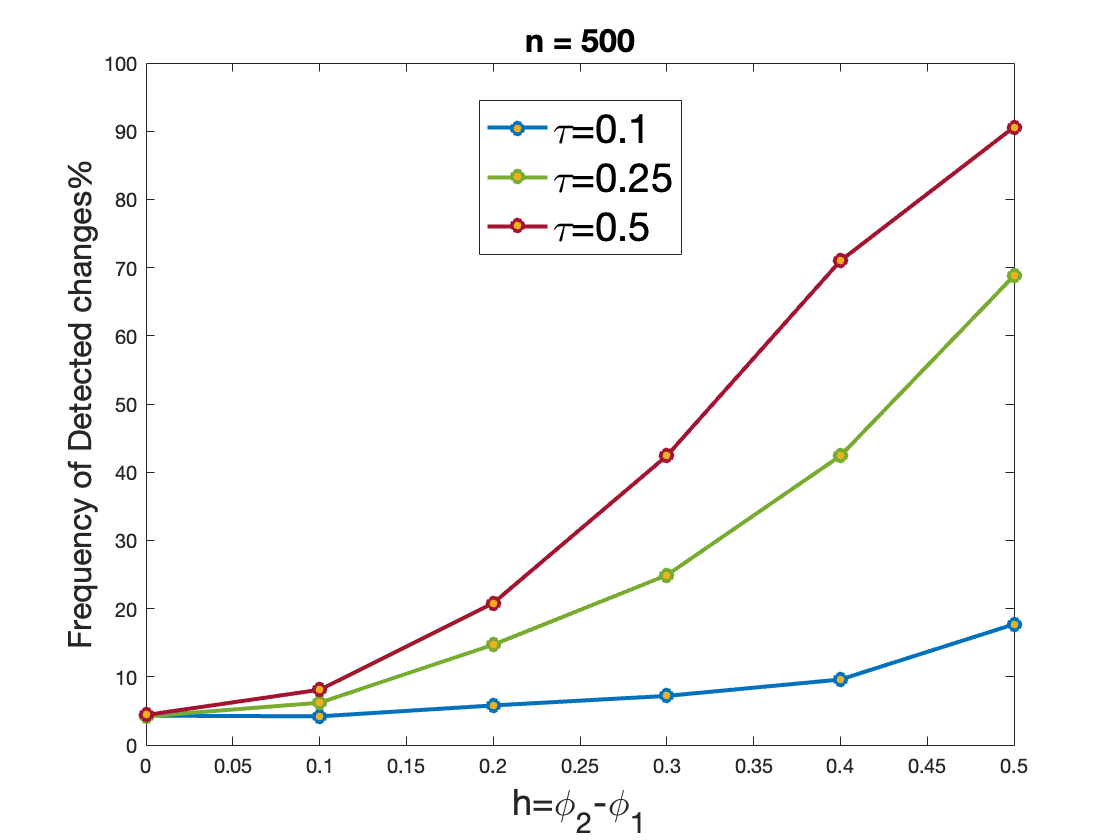} 
        
    \end{minipage}
    \hfill
    \begin{minipage}{0.45\textwidth}
        \centering
        \includegraphics[width=\textwidth]{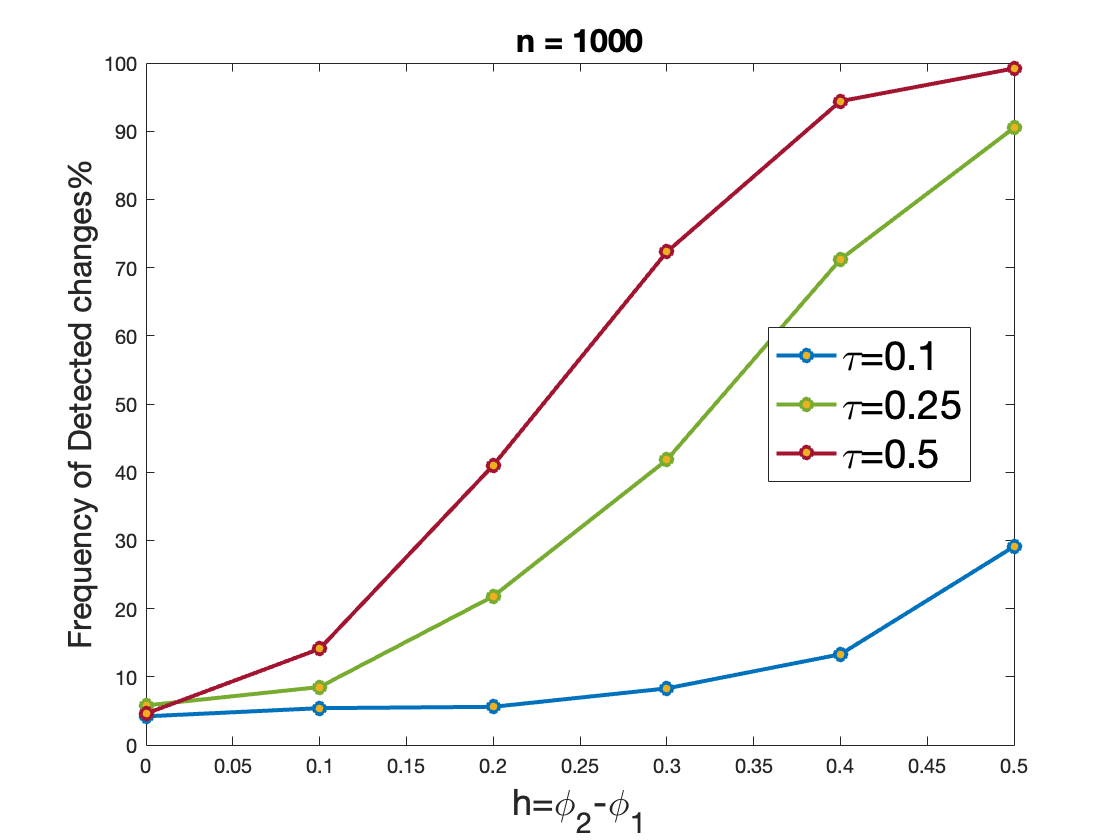} 
    \end{minipage}
     \caption{ Power of the test for different values of  $h=\phi_2-\phi_1$ with $n=500$ and $n=1000$ data points and Laplace distributed innovations.
     The curves correspond to different values of $\tau$, where changes occur at $1/10$ for the blue curve $(\tau=0.1)$, at $1/4$ for the green curve $(\tau=0.25)$, and at $1/2$ for the red curve $(\tau=0.5)$. $\phi_1$ is set to $0.4$. For $h=0$ (no change), the power corresponds to the significance level  $5\%$.   }
     \label{fig:power_test}
\end{figure}
\end{example}
\begin{example}[Real Data]
The upper plot of Figure \ref{EEG_from_REM_to_S2} depicts a segment of EEG recordings from a single patient sourced from the dataset by \citet{terzano2001atlas}. The time series comprises data points sampled at 512 Hz over a 39-minute interval. In this dataset, each 30-second batch is manually labeled with the corresponding sleep stage by an expert. In the depicted segment, the patient transitions from the REM phase to the S2 phase. Our objective is to employ the proposed method to statistically validate this transition and potentially pinpoint the exact time of occurrence. 

The lower part of Figure \ref{EEG_from_REM_to_S2} shows the change in the corresponding turning rate series (dashed red line in the upper figure). Assuming the EEG to be Gaussian and stationary under the null hypothesis of no change, we reject the null hypothesis at significance level 0.05 and  with a p-value of $7.29\times 10^{-5}\;.$ The same test applied to the corresponding two subsamples (from the beginning to the change-point and from the change-point to the end) fails to reject the null hypothesis with $p$-values $0.986$ and $0.138$, respectively. 
The source code for the simulations is available on GitHub at \url{https://anonymous.4open.science/r/Turning-rate-time-series-424E}. 

\begin{figure}
    \centering
    \includegraphics[width=1\linewidth]{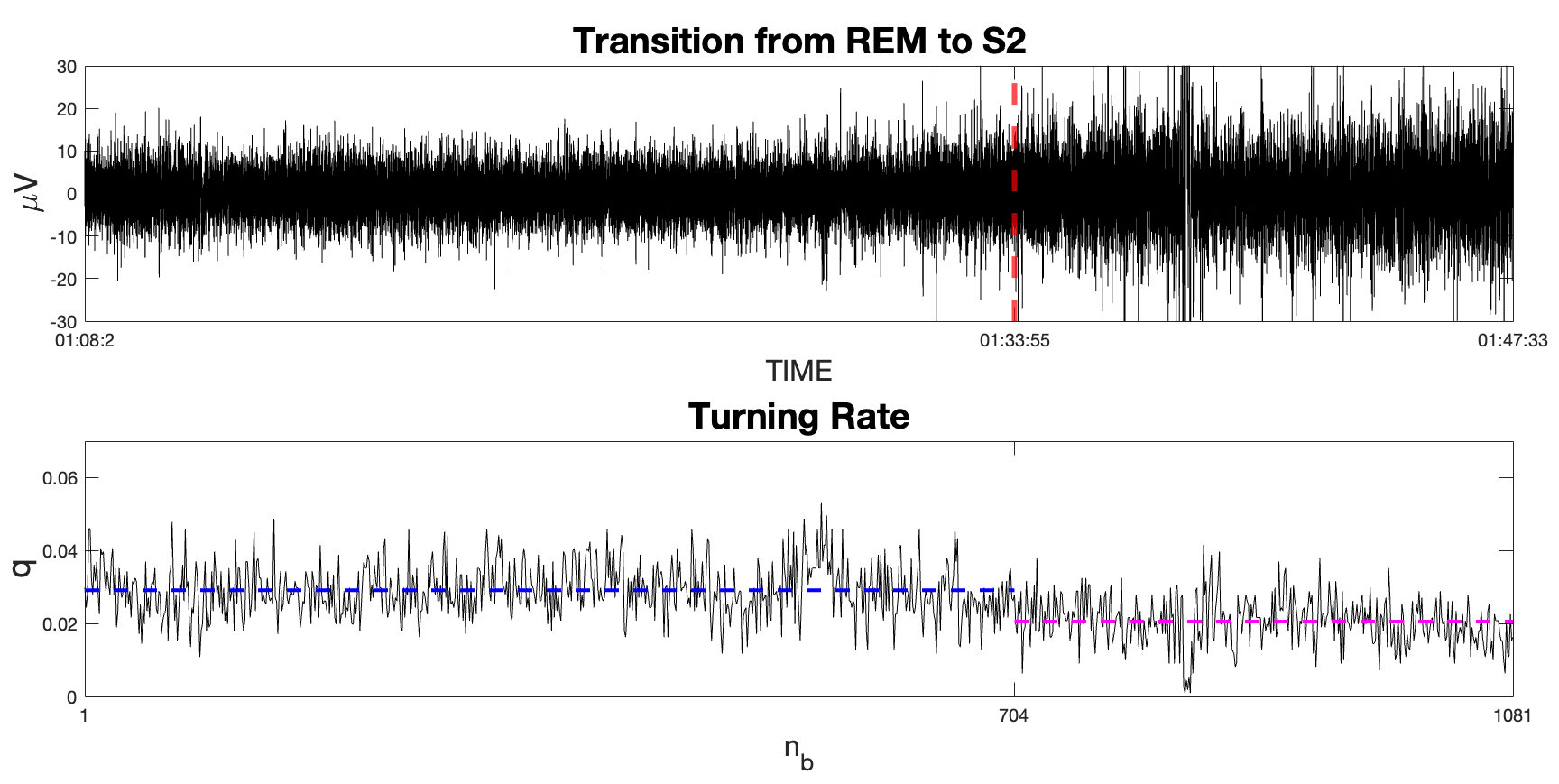}
       \caption{Extract of EEG recordings for the $5$-th patient of the dataset \citet{terzano2001atlas}. The recordings cover approximately 30 minutes of observations extracted from the C4-P4 channel, in the temporal window going from 01:08:2 (REM) to 01:47:33 (S2). The time series contains $1.2\times 10^6$  data points. 
    } \label{EEG_from_REM_to_S2}
\end{figure}
\end{example}
\section{Discussion and outlook}

The main theoretical contribution of this work corresponds to the establishment of central limit theorems for  relative frequencies in linear processes; see Section \ref{subsec:clt}. An intriguing question is whether one can extend these  limit theorems to estimators of the form $\tfrac{1}{n}\sum_{t=1}^n \bm{1}(\Xb_t\in A)$ for more general sets $A.$
In machine learning parlance, an ordinal patterns is a feature. While in the presented framework the ordinal patterns is fixed beforehand, machine learning learns the features from data. In view of the application to EEG data, one might design methods in future work that also selects a suitable linear combination of the most relevant ordinal patterns from either supervised or unsupervised data. In view of the central limit theorems for relative frequencies presented in Section \ref{sec.CLTs_general}, one might also want to directly learn features $\bm{1}(W\Xb_t\leq \mathbf{v})$ for learnable weight matrix $W$ and shift vector $\mathbf{v}.$ The problem is then closely connected to training of neural networks with Heaviside activation function $\sigma(x)=\bm{1}(x\geq0)$.

\section*{Acknowledgements}
Annika Betken gratefully acknowledges financial support from the Dutch Research Council (NWO) through VENI grant 212.164. 

\nocite{*}
\bibliography{sample}

\appendix 
\section*{Appendix}
\textbf{Notation: } The Euclidean norm in \(\mathbb{R}^r\) is denoted by \(\|\cdot\|\). For \( i_1, \ldots, i_h \geq 1 \), let \( \partial^h_{i_1, \ldots, i_h} f \) denote the partial derivative of \( f \) taken \( h \) consecutive times with respect to the variables \( i_1, \ldots, i_h \). We denote with $\nabla f$ the transposed  gradient of $f$, given by $
\nabla f (\mathbf{x}) = \left( \partial_1 f(\mathbf{x}), \ldots, \partial_p f(\mathbf{x}) \right),$
and with $\nabla^2 f$ the corresponding Hessian matrix, expressed as $
\nabla^2 f(\mathbf{x}) = \left( \partial^2_{ps} f(\mathbf{x}) \right)_{p,s}.$  The symbol \(\C\) denotes a generic numerical constant that may change upon each appearance.

We define the truncated linear process $\Xb_{t,j}$ as $\Xb_{t,j}:=\sum_{i=0}^j A_i \mathbf{Z}_{t-i}$.  Specifically, $\Xb_{t,0} = A_0\mathbf{Z}_t$, and $\Xb_t = \Xb_{t,\infty}$. Further, let $\mathbf{R}_{t,j}= \mathbf{X}_{t}-\mathbf{X}_{t,j}=\sum_{i=j+1}^\infty A_i \mathbf{Z}_{t-i}$. For fixed $\mathbf{u}=(u_0, \ldots, u_{r-1})^\top$,
\begin{align}
    p_j(\mathbf{u})=\mathbb{P}(  \mathbf{X}_{t,j} \leq \mathbf{u})\;, \quad  p(\mathbf{u}) = \mathbb{P}( \mathbf{X}_1 \leq \mathbf{u})\;.
\end{align}
$p_j$ only depends on $j$ as $(X_{t,j})_{t\geq 1}$ forms a stationary process. Moreover, we define a function $g:\mathbb{R}^r \to \mathbb{R}$ as Lipschitz if, for all $\bx, \by \in \mathbb{R}^r$, it holds that $|g(\bx) - g(\by)| \leq L \|\bx - \by\|$, with the smallest constant $L$ denoted by $\Lip(g)$. Lastly, the symbol  $\mathcal{C}$ denotes a generic constant and the cumulative distribution function of $\mathbf{Z}_1$ is denoted by $\mathcal{G}(\cdot)\;.$ 

The proofs of Theorem \ref{theorem:convergence_OP_SRD} and Theorem \ref{theorem:convergence_OP_LRD} rely on the following Martingale decomposition (consequence of Lemma \ref{sigma_algebra}). Let $\mathcal{F}_t=\sigma(\mathbf{Z}_t, \mathbf{Z}_{t-1}, \ldots)$ be the sigma field generated by the past values of $\mathbf{Z}_t$. Then, 

\begin{align}
\label{eq:martingale_decomposion}
    \bm{1}\left( \Xb_t \leq \mathbf{u} \right) - p(\mathbf{u}) = \sum_{j=0}^\infty U_{t,j} \;, \quad U_{t,j}:&= \mathbb{E}\big[\bm{1}(\Xb_t\leq \bu) \big| \mathcal{F}_{t-j}\big] - \mathbb{E}\big[\bm{1}(\Xb_t\leq \bu) \big| \mathcal{F}_{t-j-1}\big]\;, \\
    &= p_{j}(\mathbf{u} -\mathbf{R}_{t,j}) - p_{j+1}(\mathbf{u} -\mathbf{R}_{t,j+1}) \nonumber\;.
\end{align}

\section{Proofs  Short-Range Dependence}
\label{appendix:SRD}
\begin{lemma}
\label{lem.Lip}
Let $\Xb_t= \sum_{j= 0 }^\infty A_j \mathbf{Z}_{t-j}$ be a multivariate linear process. If Assumption \ref{assump.1} holds for $J\geq 0$, then $\sup_{j\geq J} \Lip(p_j)\leq \Lip(p_{J})  <\infty.$ 
\end{lemma}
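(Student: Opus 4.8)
The plan is to split the claim into two parts: first that $p_J$ itself is Lipschitz, i.e. $\Lip(p_J)<\infty$, and second that passing from $J$ to any $j\ge J$ can only preserve or decrease the Lipschitz constant, because $p_j$ is an average of shifted copies of $p_J$. The second part is the conceptual heart and is short; the first is where the work lies.

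For the finiteness of $\Lip(p_J)$, I would use Assumption \ref{assump.1}. Writing $\mathbf{W}:=D\mathbf{X}_{t,J}=D\sum_{i=0}^J A_i\mathbf{Z}_{t-i}$, the assumption guarantees that $\mathbf{W}$ has independent components, each with a bounded Lebesgue density. Since $D\in\mathrm{GL}(\mathbb{R},r)$, each coordinate $(\mathbf{X}_{t,J})_i=\sum_k (D^{-1})_{ik}W_k$ is a linear combination of the independent $W_k$ in which at least one coefficient is nonzero (the $i$-th row of $D^{-1}$ cannot vanish). A convolution argument then shows that $(\mathbf{X}_{t,J})_i$ has a bounded marginal density, since convolving the bounded density of the nonzero term with the law of the remaining independent part preserves the sup-norm bound: $\|f\ast\mu\|_\infty\le\|f\|_\infty$ for any probability measure $\mu$. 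With all marginals bounded, I bound the Lipschitz constant of the CDF $p_J$ by a one-coordinate-at-a-time telescoping: for $\mathbf{u},\mathbf{v}$ differing only in coordinate $i$ with $v_i\le u_i$, one has $p_J(\mathbf{u})-p_J(\mathbf{v})=\mathbb{P}\big(v_i<(\mathbf{X}_{t,J})_i\le u_i,\ (\mathbf{X}_{t,J})_k\le u_k\ \forall k\ne i\big)\le\|f_{(\mathbf{X}_{t,J})_i}\|_\infty\,|u_i-v_i|$. Summing the single-coordinate increments and passing from the $\ell^1$ to the Euclidean norm yields $\Lip(p_J)\le\sqrt{r}\,\max_i\|f_{(\mathbf{X}_{t,J})_i}\|_\infty<\infty$.

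For the monotonicity I would exploit independence of disjoint blocks of innovations. For $j\ge J$ write $\mathbf{X}_{t,j}=\mathbf{X}_{t,J}+\mathbf{S}$, where $\mathbf{S}:=\sum_{i=J+1}^j A_i\mathbf{Z}_{t-i}$ depends on innovations disjoint from those appearing in $\mathbf{X}_{t,J}$ and is therefore independent of $\mathbf{X}_{t,J}$. Conditioning on $\mathbf{S}$ gives $p_j(\mathbf{u})=\mathbb{E}\big[p_J(\mathbf{u}-\mathbf{S})\big]$, so that $p_j$ is a mixture of translates of $p_J$. Hence for any $\mathbf{u},\mathbf{v}$,
\[
|p_j(\mathbf{u})-p_j(\mathbf{v})|\le\mathbb{E}\big[\,|p_J(\mathbf{u}-\mathbf{S})-p_J(\mathbf{v}-\mathbf{S})|\,\big]\le\Lip(p_J)\,\|\mathbf{u}-\mathbf{v}\|,
\]
which shows $\Lip(p_j)\le\Lip(p_J)$ for every $j\ge J$ and therefore $\sup_{j\ge J}\Lip(p_j)\le\Lip(p_J)<\infty$.

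I expect the main obstacle to be the first step: correctly deducing boundedness of the marginal densities of $\mathbf{X}_{t,J}$ from Assumption \ref{assump.1}. The tempting shortcut ``bounded joint density implies bounded marginals'' is false, so the argument must go through the convolution structure of each coordinate, using that at least one coefficient of $D^{-1}$ in that coordinate is nonzero and that the corresponding innovation has bounded density. Once the marginals are under control, the telescoping bound and the subsequent smoothing estimate are routine.
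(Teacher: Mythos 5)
Your proposal is correct and follows essentially the same route as the paper: the finiteness of $\Lip(p_J)$ is obtained from Assumption \ref{assump.1} by writing each coordinate of $\mathbf{X}_{t,J}$ as a linear combination of the independent bounded-density components of $D\mathbf{X}_{t,J}$, using the convolution bound to control the marginal densities, and telescoping coordinate by coordinate, while the monotonicity $\Lip(p_j)\le\Lip(p_J)$ comes from representing $p_j$ as a mixture of translates of $p_J$ (the paper iterates the one-step relation $p_{j+1}(\mathbf{u})=\int p_j(\mathbf{u}-A_{j+1}\mathbf{t})\,d\mathcal{G}(\mathbf{t})$ inductively, whereas you condition on the whole block $\mathbf{S}$ at once — an inessential difference). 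Your explicit remark that bounded joint density does not imply bounded marginals, and that one must instead exploit a nonzero coefficient in each row of $D^{-1}$, is a correct and slightly more careful rendering of the step the paper summarizes as "from the convolution formula, the resulting density is also bounded."
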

\begin{proof}[Proof of Lemma \ref{lem.Lip}]
In order to prove the Lipschitzness of $p_j$, we shall use the following relation: 
\begin{equation}
\label{eq:recursive_G}
    p_{j+1}(\mathbf{u}) = \mathbb{P} \left(\mathbf{X}_{t,j+1} \leq \mathbf{u}\right)   = \mathbb{P} \left( \mathbf{X}_{t,j} \leq - A_{j+1} \mathbf{Z}_{t-j-1}+\mathbf{u} \right)=\int p_j (\mathbf{u} -  A_{j+1} \mathbf{t} ) \, d\mathcal{G}(\mathbf{t}) \quad \text{for all } j\geq 0\;,
\end{equation}
If there exists $j\geq 0$ for which $\text{Lip}(p_j)< \infty$, then for any $\mathbf{u},\mathbf{v} \in \RR^r$
\begin{align*}
    |p_{j+1}(\mathbf{u}) - p_{j+1}(\mathbf{v})|\leq \int \left|p_j(\mathbf{u}-   A_{j+1} \mathbf{t} ) - p_j (\mathbf{v}-  A_{j+1} \mathbf{t} ) \right|\, d\mathcal{G}(\mathbf{t})\leq \text{Lip}(p_j) \|\mathbf{u}-\mathbf{v}\|\;,
\end{align*}
thus $\text{Lip}(p_{j+1})\leq 
\text{Lip}(p_j)\;.$ 
To conclude  that $\sup_{j\geq J} \Lip(p_j)\leq \Lip(p_{J})  <\infty\;,$

it is enough to prove that $\text{Lip}(p_{J})< \infty.$ By definition the function $p_{J}$ is the cumulative distribution function of the variable $\Xb_{t,J}$. By Assumption \ref{assump.1}, $D\Xb_{t,J}$ is a vector of random variables with bounded Lebesgue density for an invertible matrix $D$. Such probability density function is denoted by $f_{J}\;.$ Then, we can write, for $\mathbf{u}=(u_0,\ldots, u_{r-1})$
\begin{align*}
    p_{J}(\mathbf{u})= \int_{-\infty}^{u_0} \cdots \int_{-\infty}^{u_{r-1}} f_{J}(t_0, \ldots, t_{r-1})\, dt_0,\ldots dt_{r-1}\;.  
\end{align*}
For any $\mathbf{u}=(u_0, \ldots, u_{r-1})^\top$ and $\mathbf{v}=(v_0, \ldots, v_{r-1})^\top$,  we have $\{ \mathbf{X}_{t,J}\leq \mathbf{u}\}/ \{ \mathbf{X}_{t,J} \leq \mathbf{v} \}\subset \bigcup_{i=0}^{r-1} \{ u_i \wedge v_i \leq \mathbf{X}_{t,J,i} \leq u_i \vee v_i \}$ with $\mathbf{X}_{t,J,i}$ the $i-$th component of $\mathbf{X}_{t,J}$. Then, 
\begin{align*}
     \PP(\mathbf{X}_{t,J}\leq \mathbf{u})-  \PP(\mathbf{X}_{t,J}\leq \mathbf{v})=p_{J}(\mathbf{u})-   p_{J}(\mathbf{v})  \leq \sum_{i=0}^{r-1} \int_{ u_i \wedge v_i }^{  u_i \vee v_i } f_{J}^{(i)}(t_i) \, dt_i\;,
\end{align*}
where \( f_{J}^{(i)} \) is the marginal density obtained from the \(i\)-th entry of the vector \(\mathbf{X}_{t,J}\), denoted by $\mathbf{X}_{t,J}^{(i)}\;.$
Let $e_i$ be the $i-$th vector of the canonical basis of $\RR^r$, and note that each entry  $\mathbf{X}_{t,J}^{(i)}$ is obtained as
$\mathbf{X}_{t,J}^{(i)} = e_i ^\top  \mathbf{X}_{t,J}\;.$ Furthermore, $\mathbf{X}_{t,J}^{(i)} $ admits a bounded probability density function. In fact, by Assumption \ref{assump.1}, there exist an invertible matrix $D$ and i.i.d. random variables $Y_0,\ldots, Y_{r-1}$ admitting bounded density, such that 
$$\mathbf{X}_{t,J}^{(i)} = e_i ^\top  \mathbf{X}_{t,J} = e_i^\top D^{-1} \left( \begin{array}{c}
     Y_1\\
     \vdots\\
     Y_r
\end{array}\right)= \sum_{j=0}^{r-1} (e_i^\top \cdot D^{-1}_{(j)}) Y_j \;,$$
and $D^{-1}_{(j)}$ denoting the $j-$th column of $D^{-1}\;.$ The right-hand side of the previous expression is the linear combination of independent and identically distributed random variables with bounded probability density. From the convolution formula, the resulting density is also bounded (by a constant denoted as $\C$). Therefore, for any \( i = 0, \ldots, r-1 \), we have \( \sup_{x\in \RR^r}f_{J}^{(i)}(x) \leq \C \) for a positive constant \(\C\), every integral is upper bounded as follows (shown here for \(i = 0\)):
$$
     \int_{u_0 \wedge v_0}^{u_0 \vee v_0} \, \underbrace{\left(\,  \int_{-\infty}^{\infty}\cdots \int_{-\infty}^{\infty} f_\mathbf{J}(\mathbf{u}) \, dt_1 \ldots dt_{r-2} \right)}_{f_{J}^{(0)} (u_0)}d t_0
    \leq  \C \| \mathbf{u} -\mathbf{v} \|_{\infty}\;.$$
\end{proof}
As we study functionals that are indicators and can be bounded by one, the next result shows that the conclusion of Lemma 3.4 of  \citet{Furmanczyk} still holds although the Lipschitz condition only applies for $p_j$ with $j\geq J.$
\begin{lemma} 
\label{lemma:lemma3.4Furmanczyk}
Let $\Xb_t= \sum_{j= 0 }^\infty A_j \mathbf{Z}_{t-j}$ be a multivariate linear process satisfying $\sum_{j=0}^\infty \|A_j\|< \infty$ and Assumption \ref{assump.1}. Then, for $U_{t,j}$ defined in \eqref{eq:martingale_decomposion}

    \begin{enumerate}
        \item[(i)] $\EE[\, U_{t,0}^2\,] \leq P(\Xb_1\leq \bu) \;,$ 
        \item[(ii)] $\EE[\,  U_{t,j}^2\, ] \leq \C \|A_{j+1}\|^2\;.$
    \end{enumerate}
\end{lemma}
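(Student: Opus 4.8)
The two bounds both follow from the martingale-difference structure of the increments $U_{t,j}$ recorded in the decomposition \eqref{eq:martingale_decomposion}, combined with the recursion \eqref{eq:recursive_G} and the Lipschitz estimate of Lemma \ref{lem.Lip}.

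For (i), observe that $\Xb_t$ is $\mathcal{F}_t$-measurable, so the $j=0$ term collapses to $U_{t,0}=\bm{1}(\Xb_t\leq\bu)-\EE[\bm{1}(\Xb_t\leq\bu)\mid\mathcal{F}_{t-1}]$. For any square-integrable $W$ and sub-$\sigma$-field $\mathcal{G}$ one has $\EE[(W-\EE[W\mid\mathcal{G}])^2]=\EE[W^2]-\EE[\EE[W\mid\mathcal{G}]^2]\leq\EE[W^2]$. Taking $W=\bm{1}(\Xb_t\leq\bu)$, using $\bm{1}^2=\bm{1}$, and invoking stationarity yields $\EE[U_{t,0}^2]\leq\EE[\bm{1}(\Xb_t\leq\bu)]=P(\Xb_1\leq\bu)$, which is (i).

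For (ii), I would first treat the indices $j\geq J$. Writing $\mathbf{R}_{t,j}=\mathbf{R}_{t,j+1}+A_{j+1}\mathbf{Z}_{t-j-1}$ in the explicit form $U_{t,j}=p_j(\bu-\mathbf{R}_{t,j})-p_{j+1}(\bu-\mathbf{R}_{t,j+1})$, I condition on the information that fixes $\mathbf{R}_{t,j+1}$ while leaving $\mathbf{Z}_{t-j-1}$ random. The recursion \eqref{eq:recursive_G} identifies $p_{j+1}(\bu-\mathbf{R}_{t,j+1})$ as precisely the conditional mean of $\mathbf{Z}_{t-j-1}\mapsto p_j(\bu-\mathbf{R}_{t,j+1}-A_{j+1}\mathbf{Z}_{t-j-1})$, so that $U_{t,j}$ is a conditional martingale difference and $\EE[U_{t,j}^2]$ equals the expectation of the conditional variance of $p_j(\mathbf{v}-A_{j+1}\mathbf{Z}_{t-j-1})$ with $\mathbf{v}=\bu-\mathbf{R}_{t,j+1}$ held fixed. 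Bounding this variance by the second moment about the value at the mean $\EE[\mathbf{Z}_{t-j-1}]=\mathbf{0}$ and using $\Lip(p_j)\leq\Lip(p_J)<\infty$ from Lemma \ref{lem.Lip}, the estimate $|p_j(\mathbf{v}-A_{j+1}\mathbf{Z}_{t-j-1})-p_j(\mathbf{v})|\leq\Lip(p_J)\,\|A_{j+1}\|\,\|\mathbf{Z}_{t-j-1}\|$ gives, uniformly in $\mathbf{v}$, the bound $\EE[U_{t,j}^2]\leq\Lip(p_J)^2\,\EE[\|\mathbf{Z}_1\|^2]\,\|A_{j+1}\|^2$, where $\EE[\|\mathbf{Z}_1\|^2]<\infty$ is the trace of the innovation covariance.

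It then remains to absorb the finitely many indices $1\leq j<J$, where $p_j$ need not be Lipschitz. There I use the crude bound $|U_{t,j}|\leq 1$, so that $\EE[U_{t,j}^2]\leq 1$; since Assumption \ref{assump.1} guarantees $A_{j+1}\neq\mathbf{0}$ whenever $j+1\leq J$, the minimum $\min_{1\leq j<J}\|A_{j+1}\|^2$ is strictly positive, and enlarging the generic constant $\C$ covers these finitely many terms, establishing (ii) for every $j\geq 1$. The only delicate step is the conditional-variance reduction: after conditioning, the single innovation $\mathbf{Z}_{t-j-1}$ is the sole source of randomness and enters linearly through $A_{j+1}$, so that Lipschitz continuity of $p_j$ transfers its fluctuation into the factor $\|A_{j+1}\|^2$. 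Checking that $p_{j+1}(\bu-\mathbf{R}_{t,j+1})$ is genuinely the required conditional expectation—which is exactly the content of the recursion \eqref{eq:recursive_G}—is what makes the martingale cancellation exact and is the crux of the argument.
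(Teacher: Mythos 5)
Your proof is correct and follows essentially the same route as the paper's: part (i) by expanding the square of the centered indicator, and part (ii) by splitting at $J$, using the crude bound $|U_{t,j}|\le 1$ together with $A_{j+1}\neq\mathbf{0}$ for the finitely many small indices, and the recursion \eqref{eq:recursive_G} plus $\Lip(p_j)\le\Lip(p_{J})$ for $j\ge J$. The only cosmetic difference is that you organize the key estimate as a conditional-variance bound about the point $p_j(\bu-\mathbf{R}_{t,j+1})$, whereas the paper bounds $|U_{t,j}|$ pointwise via the integral representation against $dF_{1,j}$; both yield the same $\C\|A_{j+1}\|^2$.
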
    
    \begin{proof}
  (i) follows from expending the square, 
\begin{align*}
     \EE \left[ \,  ( \bm{1}(\Xb_t\leq \bu) - \EE[\bm{1}(\Xb_t\leq \bu)|\F_{t-1} ] )^2 \,\right]
     = P(\Xb_t\leq \bu)-\EE\big[\EE[\bm{1}(\Xb_t\leq \bu)|\F_{t-1} ]^2\big]
     \leq P(\Xb_1\leq \bu)\;.   
    \end{align*}
    To prove (ii), notice that $|U_{t,j}|\leq 1=\|A_j\|^{-1}\|A_j\|,$ and thus, for $j\leq  J-1,$ $\EE[ \, |U_{t,j}|^2 \, ]\leq \C_1 \|A_{j+1}\|^2$ with $\C_1= \max\limits_{j=0,\ldots,J-1} \|A_j\|^{-2}.$ Since $J$ is fixed, $\C_1$ is a finite constant as Assumption \ref{assump.1} holds and $\|A_j \|\neq 0$ for $j=0,\ldots, J$  . 
        To treat the case $j\geq J,$ notice that by \eqref{eq:martingale_decomposion} $U_{t,j}=p_{j}(\mathbf{u} -\mathbf{R}_{t,j}) - p_{j+1}(\mathbf{u} -\mathbf{R}_{t,j+1})$. Given the independence between \(A_{j+1} \mathbf{Z}_{t-(j+1)}\) and \(\mathbf{R}_{t,j+1}\), we have
        $p_{j+1}(\mathbf{u} -\mathbf{R}_{t,j+1})=\int p_j (\mathbf{u}-\mathbf{R}_{t,j+1} - \mathbf{t}) \, dF_{1,j}(\mathbf{t})$,  where \(F_{j}\) is the distribution function of \(A_{j+1} \mathbf{Z}\). 
     In Lemma \ref{lem.Lip}, we have shown that $p_j$ is Lipschitz for $j\geq J $, with constant $\text{Lip}(p_j)\leq \text{Lip}(p_{J})$. Thus, for $j\geq J,$
        \begin{align*}
            |U_{t,j}|\leq& \int |p_{j}(\mathbf{u} -\mathbf{R}_{t,j}) - p_{j+1}(\mathbf{u} -\mathbf{R}_{t,j+1} -\mathbf{t})|\, dF_{1,j}(\mathbf{t})\\
            \leq& \text{Lip}(p_j) \int  \|\mathbf{R}_{t,j}-\mathbf{R}_{t,j+1} - \mathbf{t}\| \, dF_{1,j+1}(\mathbf{t})\\
            \leq& \text{Lip}(p_{J}) \left( \|\mathbf{R}_{t,j}-\mathbf{R}_{t,j-1}\| + \int \|\mathbf{t}\| \, dF_{1,j}(\mathbf{t}) \right)\\
            \leq& \text{Lip}(p_{J}) \left( \|A_{j+1} \mathbf{Z}_{t-j-1}\| + \EE \left[\,\| A_{j+1} \mathbf{Z}_{t-j-1}\| \, \right] \right)\\
            \leq& \text{Lip}(p_{J}) \|A_{j+1}\| \left(\|\mathbf{Z}_{t-j-1}\|+ \sqrt{\EE\left[ \|\mathbf{Z}_{t-j-1} \|^2\right]}\right)\;.
        \end{align*}
        Taking expectation, it follows that $\EE[\,  U_{t,{j}}^2\, ] \leq \C_2 \|A_{j+1}\|^2$ for a constant $\C_2$ that is independent of $j\geq J.$ Thus, (ii) holds with $\C=\max(\C_1,\C_2).$
    \end{proof}
    
 \begin{theorem}[Theorem 4.2 in \citet{billingsley1968convergence}]
\label{theorem:theorem4.2Billingsley}
    Let $B:=\left(B(\tau)\right)_{\tau \in [0,1]}$ a standard Brownian motion on $[0,1]$. Let $W=\left( W(\tau)\right)_{\tau \in [0,1]}$ be a $\D[0,1]$-valued stochastic process. If there exists a process $V_{un}$ such that $u\in \mathbb{N}$:
    \begin{itemize}
        \item [(i)] $V_{u,n} \overset{\D[0,1]}{\Longrightarrow}  \sigma_u B$ for a certain $\sigma_u>0$.
        \item [(ii)]There exists finite the limit $\sigma^2 =\lim_{u\to \infty} \sigma_u^2 >0\;,$
        \item [(iii)]
        $ \liminf\limits_{u\to \infty}\limsup\limits_{n\to \infty} \PP \left( |V_{u,n}-W_n|\geq \varepsilon\right)=0\quad \text{for any }\varepsilon>0\;.$
    \end{itemize}
    Then, 
    $$ W_n \overset{\D[0,1]}{\Longrightarrow}\sigma B\quad\text{as }n\to \infty\;.$$
\end{theorem}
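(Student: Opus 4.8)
The plan is to prove the weak convergence $W_n \overset{\D[0,1]}{\Longrightarrow} \sigma B$ through the portmanteau characterization of convergence in distribution on a separable metric space: since $\D[0,1]$ equipped with the Skorokhod metric $d_S$ is Polish, it suffices to show that $\EE[f(W_n)] \to \EE[f(\sigma B)]$ for every bounded, $d_S$-uniformly continuous $f:\D[0,1]\to\RR$. Fixing such an $f$, the backbone of the argument is the telescoping inequality
$$\big|\EE[f(W_n)]-\EE[f(\sigma B)]\big| \leq \big|\EE[f(W_n)]-\EE[f(V_{u,n})]\big| + \big|\EE[f(V_{u,n})]-\EE[f(\sigma_u B)]\big| + \big|\EE[f(\sigma_u B)]-\EE[f(\sigma B)]\big|,$$
which introduces the approximating process $V_{u,n}$ and its fixed-$u$ limit $\sigma_u B$ as intermediate steps.

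For the middle term I would keep $u$ fixed and let $n\to\infty$: hypothesis (i) states $V_{u,n}\overset{\D[0,1]}{\Longrightarrow}\sigma_u B$, and since $f$ is bounded and continuous this term tends to $0$. For the last term I would let $u\to\infty$: hypothesis (ii) gives $\sigma_u\to\sigma$ (taking positive square roots), so that, realizing $\sigma_u B$ and $\sigma B$ on the same Brownian motion $B$, one has $\sigma_u B \to \sigma B$ in the uniform and hence Skorokhod metric almost surely; continuity and boundedness of $f$ together with dominated convergence then force $\EE[f(\sigma_u B)] \to \EE[f(\sigma B)]$.

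The first term is where the approximation hypothesis (iii) is consumed. Uniform continuity of $f$ provides, for each $\eta>0$, a $\delta>0$ such that $|f(x)-f(y)|\leq\eta$ whenever $d_S(x,y)<\delta$; splitting on the event $\{d_S(W_n,V_{u,n})\geq\delta\}$ and bounding $f$ by $\|f\|_\infty$ there yields
$$\big|\EE[f(W_n)]-\EE[f(V_{u,n})]\big| \leq \eta + 2\|f\|_\infty\,\PP\big(d_S(W_n,V_{u,n})\geq\delta\big).$$
Because the Skorokhod metric is dominated by the uniform metric, $d_S(W_n,V_{u,n})\leq\|W_n-V_{u,n}\|_\infty$, the probability is at most $\PP(\|W_n-V_{u,n}\|_\infty\geq\delta)$, precisely the quantity in (iii). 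Assembling the three bounds, I would take $\limsup_{n\to\infty}$ first — this annihilates the middle term by (i) — and then $\liminf_{u\to\infty}$, which leaves $\big|\EE[f(W_n)]-\EE[f(\sigma B)]\big|$ bounded by $\eta$ after invoking (iii); since $\eta>0$ was arbitrary, the claim follows.

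The substance of the proof is light, so I expect the main care to lie in the bookkeeping of the iterated limits: the $\limsup_n$ must be applied before the $\liminf_u$ so that (i) is usable at fixed $u$, and one must check that the weaker $\liminf_u$ appearing in (iii) still suffices. This works because the third telescoping term has a genuine limit $0$ as $u\to\infty$, so adding it does not disturb the $\liminf_u$ of the remaining $u$-dependent bound. The one genuinely topological ingredient is the domination $d_S\leq\|\cdot\|_\infty$, which is what lets the sup-norm closeness assumed in (iii) be transferred to the Skorokhod-metric modulus of continuity of $f$.
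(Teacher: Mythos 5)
Your argument is correct, but note that the paper does not prove this statement at all: it is imported verbatim as Theorem 4.2 of \citet{billingsley1968convergence} and used as a black box in the proof of Theorem \ref{theorem:SRD_multivariate}, so there is no in-paper proof to compare against. Your route differs slightly from Billingsley's original one. He works with the closed-set form of the portmanteau theorem, bounding $\PP(W_n \in F)$ by $\PP(V_{u,n} \in F^{\delta}) + \PP(d_S(W_n,V_{u,n}) \geq \delta)$ for closed $F$ and its $\delta$-enlargement $F^{\delta}$, then sends $n \to \infty$, $u \to \infty$, $\delta \to 0$ in that order; you instead test against bounded $d_S$-uniformly continuous functions and telescope through $V_{u,n}$ and $\sigma_u B$. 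The two are essentially equivalent in substance, and your version has the mild advantage of making the identification of the limit transparent: the third telescoping term isolates exactly where hypothesis (ii) enters, via $\sup_{\tau}|\sigma_u B(\tau)-\sigma B(\tau)| = |\sigma_u-\sigma|\sup_{\tau}|B(\tau)| \to 0$ a.s.\ on a common realization of $B$. You also correctly flag and resolve the two points that require care here: the domination $d_S \leq \|\cdot\|_{\infty}$, which converts the sup-norm closeness in (iii) into Skorokhod closeness, and the fact that the paper's condition (iii) is stated with $\liminf_u$ rather than Billingsley's $\lim_u$ — your observation that the $u$-dependent remainder $|\EE f(\sigma_u B)-\EE f(\sigma B)|$ has a genuine limit $0$, so that it does not obstruct taking $\liminf_u$ of the sum, is exactly the right justification. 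No gaps.
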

\begin{proof}[Proof of Theorem \ref{theorem:SRD_multivariate}]
We apply Theorem \ref{theorem:theorem4.2Billingsley} (Theorem 4.2 of Billingsley).  The vectorized linear form  $\mathbf{X}_t=(X_t, \ldots, X_{t+r-1})^\top$, represented as a linear process of the form \(\mathbf{X}_t = \sum_{j=0}^\infty A_j \mathbf{Z}_{t-j}\), permits the martingale difference decomposition \eqref{eq:martingale_decomposion}. Then, we set
\begin{align*}
    W_n:&=\frac{1}{\sqrt{n}}\sum_{t=1}^{\lfloor n \tau \rfloor} \sum_{j=0}^\infty U_{t,j}\;, \quad V_{u,n}:=\frac{1}{\sqrt{n}} \sum_{t=1}^{\lfloor n \tau \rfloor} \sum_{j=0}^{u-1} U_{t,j}\;, \\
    H_{u,n}:&=\frac{1}{\sqrt{n}}\sum_{j=0}^{u-1} \left( \sum_{t=1}^j U_{t,j} - \sum_{t=\lfloor n \tau \rfloor+1}^{\lfloor n \tau \rfloor +j} U_{t,j}\right)\;,
\end{align*}
We will show that $V_{u,n}$ satisfies Theorem \ref{theorem:theorem4.2Billingsley}.

(i) By construction, 
    \begin{equation}
       V_{u,n}= \frac{1}{\sqrt{n}} \sum_{t=1}^{\lfloor  n \tau \rfloor} \sum_{j=0}^{u-1} U_{t+j,j}+H_{u,n}\;. 
       \label{eq:V_{u,n}}
    \end{equation}
    The advantage of formulation \eqref{eq:V_{u,n}} is that, for every fixed $u$,  $\left( \sum_{j=0}^{u-1} U_{t+j,j}, \mathcal{F}_t \right)_t$ forms a stationary, ergodic and centered martingale differences sequence. In fact, by setting $M_t(u):=\sum_{j=0}^{u-1} U_{t+j,j}$, we have $\EE[M_t(u)]<\infty$ and, for all $s\leq t-1$  
    
    \begin{align*}
        \EE \left[ M_t(u)|\, \mathcal{F}_s \right] =&  \EE \left[ \sum_{j=0}^{u-1} U_{t+j,j} \Bigg|\, \mathcal{F}_s \right] =  \sum_{j=0}^{u-1}\EE \left[ U_{t+j,j} |\, \mathcal{F}_{s} \right]\\
        =& \sum_{j=0}^{u-1}\EE \left[ \, \left( \EE \left[ \bm{1}(\mathbf{X}_{t+j}\leq \mathbf{u}) \Big| \mathcal{F}_t\right] -\EE \left[ \bm{1}(\mathbf{X}_{t+j}\leq \mathbf{u}) \Big| \mathcal{F}_{t-1}\right] \right) \Bigg|\, \mathcal{F}_{s} \right]\\
        =& \sum_{j=0}^{u-1} \EE \left[ \bm{1}(\mathbf{X}_{t+j}\leq \mathbf{u}) \Big| \mathcal{F}_s\right] -\EE \left[ \bm{1}(\mathbf{X}_{t+j}\leq \mathbf{u}) \Big| \mathcal{F}_{s}\right] \quad \text{since }s\leq t-1\\
        =&0\;.
    \end{align*} 

    Since $|U_{t,j}|\leq 1$,  we have $\max_{t,j}\EE[ U_{t,j}^2]\leq 1$ and 
    $$ H_{u,n}=\mathcal{O}_{\mathbb{P}}(n^{-1/2})\quad \text{for every }u \in \mathbb{N}\;.$$
    In particular this implies that for every fixed $u$, $H_{u,n}\xrightarrow{\PP}0$. Lastly, for $\sigma^2_u=\Var \left( M_1(u)\right)$, we have $\frac{1}{\sqrt{n}}\sum_{t=1}^{\lfloor n\tau \rfloor} M_t(u) \overset{\D[0,1]}{\Longrightarrow}\sigma_u B(\tau) $, as a consequence of the central limit theorem for martingale differences (Theorem 23.1 in  \citet{billingsley1968convergence}). Hence, $V_{u,n} = \frac{1}{\sqrt{n}}\sum_{t=1}^{\lfloor n\tau \rfloor} M_t(u) + H_{u,n} \overset{\D[0,1]}{\Longrightarrow}\sigma_u B(\tau) \;.$  

The proof of (ii): We have to show that there exists a finite $\sigma^2:=\lim\limits_{u\to \infty}\sigma^2_u\;.$ By definition 
\begin{align*}
    \sigma^2_u=\Var \left( M_1(u)\right) =& \EE\left[ \left( \sum_{j=0}^{u-1} U_{1+j,j} \right)^2 \right] \leq   \sum_{i,j=0}^{u-1}\EE\left[  |U_{1+j,j} U_{1+i,i }|\right]\\
    \leq & \sum_{i,j=0}^{u-1} \sqrt{ \EE[ U_{1+i,i}^2]} \sqrt{ \EE[ U_{1+j,j}^2]} \quad \text{Cauchy-Schwarz} \\
    =& \left(\sum_{j=0}^{u-1} \sqrt{ \EE[ U_{1+j,j}^2]}  \right)^2 \leq \C \left(\sum_{j=0}^{u-1} \|A_{j+1}\|  \right)^2 \quad \text{Lemma }\ref{lemma:lemma3.4Furmanczyk}\;.
\end{align*}
Finally $\sum_{j=0}^{u-1} \|A_{j+1}\| \leq \sum_{j=0}^{\infty} \|A_{j+1}\|<\infty.$ This already concludes point (ii). However, we can also compute the limit $\sigma^2$ exactly. In fact, from Theorem \ref{lemma6.4}, for all indexes $(t,j),(t',j')$, the expectation  $\EE[ U_{t,j}U_{t',j'}]\neq0$ if and only if $t-j=t'-j'$, thus
$ \EE[ U_{t,j}U_{t',j'}]= \EE[ U_{t,j}U_{t+j'-j,j'}]$. Furthermore, $\left(  U_{t,j}U_{t+j'-j,j'} \right)_{t\geq 1}$ is a strictly stationary sequence, hence for all $t$ we have 
$\EE[ U_{t,j}U_{t+j'-j,j'}]= \EE[ U_{1,j}U_{1+j'-j,j'}]\;.$ Therefore, for the pairs $(1+j,j),(1+i, i)$, 
$$ \EE[ U_{1+j,j}U_{1+i,i}]=\EE[U_{1,j}U_{1+i-j,i}]\;,$$
and
\begin{align*}
    \sigma^2_u=   \sum_{i,j=0}^{u-1}\EE\left[  U_{1+j,j} U_{1+i,i }\right] =  \sum_{i,j=0}^{u-1}\EE\left[ U_{1,j}U_{1+i-j,i}\right] \xrightarrow{u \to \infty}  \sum_{i,j=0}^{\infty}\EE\left[ U_{1,j}U_{1+i-j,i}\right]=:\sigma^2\;.\\
\end{align*}
Moreover, by decomposition \eqref{eq:martingale_decomposion}, $\left(\bm{1}(\mathbf{X}_t\leq \mathbf{u}) - p(\mathbf{u}) \right)\left(\bm{1}(\mathbf{X}_{t'}\leq \mathbf{u}) - p(\mathbf{u}) \right) =   \sum_{i,j=0}^\infty U_{t,j}U_{t',i}\;,$ 

$$   \sum_{i,j=0}^{\infty}\EE\left[ U_{1,j}U_{1+i-j,i}\right] = \sum_{s\in \mathbb{Z}} \sum_{i,j=0}^{\infty}\EE\left[ U_{1,j}U_{1+s,i}\right] = \sum_{s \in \mathbb{Z}} \Cov \left( \bm{1}(\mathbf{X}_s\leq \mathbf{u}), \bm{1}(\mathbf{X}_{1+s}\leq \mathbf{u}) \right) \;.$$

For condition (iii), we proceed as follows. Let $\varepsilon>0$, and using Chebyshev's inequality gives
\begin{align*}
    \PP ( |V_{u,n}-W_n|\geq \varepsilon) = \PP \left( \frac{1}{\sqrt{n}}\Bigg|  \sum_{t=1}^{\lfloor n \tau \rfloor } \sum_{j=u}^\infty U_{t,j}\Bigg| \geq \varepsilon\right) \leq \frac{1}{\varepsilon^2 n} \EE \left[ \left(  \sum_{t=1}^{\lfloor n \tau \rfloor } \sum_{j=u}^\infty U_{t,j} \right)^2 \right]\;.
\end{align*} 
The term of the right-hand side can be expanded out further

\begin{align*}
    \EE \left[ \left(  \sum_{t=1}^{\lfloor n \tau \rfloor } \sum_{j=u}^\infty U_{t,j} \right)^2 \right] =&  \sum_{t,s=1}^{\lfloor n \tau \rfloor } \sum_{i,j=u}^\infty \EE [ U_{t,j}U_{s,i}] =  \sum_{t=1}^{\lfloor n \tau \rfloor } \sum_{i,j=u}^\infty \EE [ U_{t,j}U_{t-j+i,i}] \quad \text{Lemma }\ref{lemma6.4} \\
    \leq&  \sum_{t=1}^{\lfloor n \tau \rfloor } \sum_{i,j=u}^\infty \Big| \EE [ U_{t,j}U_{t-j+i,i}] \Big| \leq  n \max_{1 \leq t\leq \lfloor n \tau \rfloor } \sum_{i,j=u}^\infty  \Big| \EE [ U_{t,j}U_{t-j+i,i}] \Big| \\
    \leq&   n \max_{1 \leq t\leq \lfloor n \tau \rfloor } \sum_{i,j=u}^\infty  \sqrt{ \EE [ |U_{t,j}|^2]}  \sqrt{\EE[|U_{t-j+i,i}|^2]}  \quad \text{Cauchy-Schwarz}\\
    \leq&   n \max_{1 \leq t\leq \lfloor n \tau \rfloor } \C \sum_{i,j=u}^\infty   \|A_{j+1}\|  \|A_{i+1}\|  \quad \text{Lemma }\ref{lemma:lemma3.4Furmanczyk}\\
    =&n \C^2 \left( \sum_{j=u+1}^\infty \|A_j\|\right)^2 \;.
\end{align*}
Combining the previous inequalities, we get 
\begin{align*}
    \PP ( |V_{u,n}-W_n|\geq \varepsilon)  \leq \frac{1}{\varepsilon^2 n} \EE \left[ \left(  \sum_{t=1}^{\lfloor n \tau \rfloor } \sum_{j=u}^\infty U_{t,j} \right)^2 \right] \leq \frac{\C^2}{\varepsilon^2} \left( \sum_{j=u+1}^\infty \|A_j\|\right)^2 \;.
\end{align*}
The term $\sum_{j=u+1}^\infty \|A_j\|$ is the tail of the series $\sum_{j=0}^\infty \|A_j\|$, which converges by assumption to zero ( short-range-dependence). Therefore, 
\begin{align*}
   \liminf_{u\to \infty}\limsup_{n\to \infty} \PP ( |V_{u,n}-W_n|\geq \varepsilon) \leq   \liminf_{u\to \infty} \C^2 \left( \sum_{j=u+1}^\infty \|A_j\|\right)^2 =0\;.
\end{align*}
After having checked all conditions, we can now apply Theorem \ref{theorem:theorem4.2Billingsley} and have thus derived the claim of Theorem \ref{theorem:SRD_multivariate}. 
\end{proof}

\section{Proofs Long-Range dependence }
\label{appendix:LRD}
\begin{theorem}[ \citet{chung_2002}, Theorem 1] 
\label{theorem:chung_theorem1} For $\mathbf{d}=(d_1,\ldots, d_r)^\top \in (0,1/2)^r$, consider the multivariate linear process $\Xb_t=\sum_{j=0}^\infty A_j \mathbf{Z}_{t-j}$ with coefficients $A_j \sim j^{\mathbf{d}-1} A_\infty$ and $A_\infty \in \text{GL}(\RR, r)$ and $(\mathbf{Z}_j)_{j \in \mathbb{Z}}$ an i.i.d. sequence and moment condition $\mathbb{E}[\|\mathbf{Z}_1\|^4]<\infty$. 
Then, 
$$ \text{diag}\left(n^{-\left(\textbf{d}+\frac{1}{2}\right)} \right)   \sum_{j=1}^{\lfloor n \tau \rfloor} \Xb_j \overset{w}{\Longrightarrow}\textbf{B}_\textbf{d}(\tau) \quad \text{for }\tau\in [0,1]\;, \quad \text{as }n \to \infty\;,$$

where $\overset{w}{\Longrightarrow}$ denotes the weak convergence, $\textbf{B}_\textbf{d}(s)$ is a $r-$dimensional fractional Brownian process $(B_{d_1}(s), \ldots, B_{d_r}(s))^\top$ with covariance matrix $[\eta_{u,v}]_{u,v=1, \ldots, r}\circ A_\infty^\top \Sigma A_\infty$. Here, the symbol $\circ$ denotes the Hadamard product, i.e the component wise product, between the matrices $A_\infty^\top \Sigma A_\infty$ and $[\eta_{u,v}]_{u,v=1, \ldots, r}$, whose components are  

$$\eta_{uv}= \frac{\Gamma(d_u) \Gamma(d_v)}{\Gamma(d_u+1) \Gamma(d_v+1)} \left( \frac{1}{1+ d_u+d_v} +\int_0^{\infty} \left( (1+t)^{d_u} - (t)^{d_u}\right) \left( (1+t)^{d_v} - (t)^{d_v}\right)\right)\, dt\;.$$
\end{theorem}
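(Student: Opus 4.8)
The plan is to prove this functional non-central limit theorem in two stages, following the classical Billingsley program: first establish convergence of the finite-dimensional distributions of the normalized partial-sum process to those of the multivariate fractional Brownian motion $\mathbf{B}_\mathbf{d}$, and then prove tightness in the product Skorokhod space $\D[0,1]^r$. Because we are summing the linear process itself, i.e.\ a \emph{linear} functional of the innovations (Hermite rank one), the limit is Gaussian, so a Lindeberg-type central limit argument --- rather than a Hermite-rank / diagram analysis --- is the appropriate tool. Throughout I write $\widetilde S_n(\tau):=\operatorname{diag}\!\big(n^{-(\mathbf{d}+1/2)}\big)\sum_{j=1}^{\lfloor n\tau\rfloor}\mathbf{X}_j$ for the normalized partial-sum process.

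For the finite-dimensional distributions I would first rewrite the partial sum as a weighted sum of the i.i.d.\ innovations. Since $A_i=\mathbf{0}$ for $i<0$, interchanging the order of summation yields
\[
\sum_{j=1}^{\lfloor n\tau\rfloor}\mathbf{X}_j=\sum_{k\in\ZZ}C_{n,k}(\tau)\,\mathbf{Z}_k,\qquad C_{n,k}(\tau):=\sum_{j=\max(1,k)}^{\lfloor n\tau\rfloor}A_{j-k},
\]
so that after normalization $\widetilde S_n(\tau)$ becomes a triangular array of row-wise independent summands. I would then reduce the multivariate, multi-time statement to a scalar one via the Cram\'er--Wold device and apply the Lindeberg--Feller central limit theorem. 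The Lindeberg condition follows from the regular-variation bound $A_j\sim j^{\mathbf{d}-1}A_\infty$, which forces each individual normalized coefficient to be of order $o(1)$ relative to the total scale (no single innovation dominates), together with $\EE[\|\mathbf{Z}_1\|^2]<\infty$.

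The core computation is the limiting covariance. I would show that for $0\le s\le t\le 1$,
\[
\operatorname{diag}\!\big(n^{-(\mathbf{d}+1/2)}\big)\,\Cov\!\Big(\textstyle\sum_{j=1}^{\lfloor ns\rfloor}\mathbf{X}_j,\ \sum_{j=1}^{\lfloor nt\rfloor}\mathbf{X}_j\Big)\,\operatorname{diag}\!\big(n^{-(\mathbf{d}+1/2)}\big)
\]
converges to the covariance of $\mathbf{B}_\mathbf{d}$. Substituting $A_j\sim j^{\mathbf{d}-1}A_\infty$ and $\Sigma=\Cov(\mathbf{Z}_1)$ produces double sums of coefficient products, which I would convert to Riemann integrals using regular variation and dominated convergence; the entrywise scaling $n^{-(d_u+d_v+1)}$ then yields precisely the kernel $\eta_{uv}$, with the integral $\int_0^\infty\big((1+x)^{d_u}-x^{d_u}\big)\big((1+x)^{d_v}-x^{d_v}\big)\,dx$ and the boundary term $1/(1+d_u+d_v)$, multiplied component-wise (Hadamard) by $A_\infty^\top\Sigma A_\infty$. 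This simultaneously confirms the Gaussian limit and identifies the limit law as $\mathbf{B}_\mathbf{d}$.

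For tightness in $\D[0,1]^r$ I would establish a moment bound on increments over triples of time points, of the form $\EE\big[\|\widetilde S_n(t)-\widetilde S_n(u)\|^2\,\|\widetilde S_n(u)-\widetilde S_n(s)\|^2\big]\le C\,|t-s|^{2\beta}$ with $\beta>1/2$, which gives tightness by the standard criterion (Theorem 15.6 in \citet{billingsley1968convergence}); it is precisely here that the fourth-moment assumption $\EE[\|\mathbf{Z}_1\|^4]<\infty$ is needed, to control the products of innovation-weighted sums and to handle the Skorokhod (rather than merely $C[0,1]$) topology cleanly. I expect the main obstacle to be the covariance computation of the previous step: because the normalization is coordinate-dependent, the off-diagonal entries carry mixed scaling $n^{-(d_u+d_v+1)}$, and extracting the exact constant $\eta_{uv}$ from the asymptotics of the truncated double sums --- while controlling the contribution of the finitely many initial, non-regularly-varying coefficients --- is the delicate part.
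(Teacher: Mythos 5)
The paper does not actually prove this statement: it is imported verbatim as Theorem~1 of \citet{chung_2002} and used as a black box in the proofs of Theorem~\ref{thm.2} and Theorem~\ref{theorem:asymptotic_turning_rates_series}, so there is no internal proof to compare against. Your outline --- triangular-array representation of the partial sums, Cram\'er--Wold plus Lindeberg--Feller for the finite-dimensional distributions (for which second moments indeed suffice), Riemann-sum asymptotics for the mixed-scaling covariances $n^{-(d_u+d_v+1)}$ yielding $\eta_{uv}$, and a $2{+}2$ increment moment bound for tightness where the fourth moment enters --- is the classical Davydov-type route that Chung's own proof follows, and no step in it appears to fail. The only caveat is that it remains a programme rather than a proof: the two delicate points you correctly flag, namely extracting the exact constant $\eta_{uv}$ from the truncated double sums and controlling the finitely many initial coefficients where the asymptotic relation $A_j\sim j^{\mathbf{d}-1}A_\infty$ has not yet taken hold, are described but not carried out.
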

\begin{remark}
For the following identity holds  for $d\in (0,1/2)$ (see \citet{beran2013long}, p. 35):
   \begin{equation}
       \frac{1}{2d+1} +\int_0^\infty \left(  (1+t)^d - t^d)\right)^2\, dt = \frac{\Gamma (1+d)^2}{\Gamma(2d+2) \sin \left(\frac{\pi}{2} +\pi d \right)}\;.
   \end{equation}
Therefore, for $d_1=\ldots=d_r=d$

$$\eta_{ij}=\frac{\Gamma(d)^2 \Gamma(1+d)^2}{\Gamma(1+d)^2 \Gamma(2d+2) \sin \left(\frac{\pi}{2} +\pi d \right) }=\frac{\Gamma(d)^2 }{\Gamma(2d+2) \cos \left(\pi d \right) }\;,$$
and, for $s=1$
\begin{align*}
    n^{\frac{1}{2}-d} \bar{\mathbf{X}}_n \xrightarrow{\D} \mathcal{N}\left( \mathbf{0}, V\right)\;, \quad V:= \frac{\Gamma(d)^2 }{\Gamma(2d+2) \cos \left(\pi d \right) } A_\infty^\top \Sigma A_\infty\;.
\end{align*}
\end{remark}
\begin{proof}[Proof of Theorem \ref{thm.2}]
\label{proof_thm2}
The proof follows from an application of the reduction principle in Appendix \ref{appendix:reduction_principle}. In fact, assumption \eqref{eq:LIP-condition1}  of Theorem \ref{theorem:reduction_principle} is satisfied as it is also one of the assumptions of Theorem \ref{thm.2}, and   
$\Xb_t= \sum_{j=0}^\infty A_j \mathbf{Z}_{t-j}\;,$ and  $(\mathbf{Z}_j)_{j \in \mathbb{Z}}$ forms a multivariate stationary white noise in $\RR^r$ with variance $\Sigma$ and, $\EE[\|\mathbf{Z}_1\|^4]< \infty\;.$ Therefore, for any fixed $\mathbf{x}\in \RR^r$, 
\begin{equation}   
 n^{\frac{1}{2} -d } \Bigg| \frac{1}{n}\sum_{j=1}^n \bm{1}\left( \Xb_{j} \leq \mathbf{x} \right) - p(\mathbf{x}) + \nabla p(\mathbf{x}) \Bar{\Xb}_n \Bigg| \xrightarrow{\PP} 0 \quad \;.
\label{uniform_principle1}
\end{equation}
Furthermore,  Theorem 1 in \citet{chung_2002} (reproduced as Theorem \ref{theorem:chung_theorem1} below) guarantees that 
$$n^{1/2-d} (\nabla p(\mathbf{x}))^{\top} \Bar{\Xb}_n \xrightarrow{\D} \N \left( 0, \sigma^2 \right) \quad   \quad \text{where } \sigma^2:= \frac{\Gamma(d)^2 }{\Gamma(2d+2)\cos{(\pi d)}} \left( \nabla p(\mathbf{x}) \right)^{\top} A_\infty \Sigma A_\infty^{\top}  \nabla p(\mathbf{x}) \;.$$
Thus, by Theorem 4.1 in \citet{billingsley1968convergence}, we obtain 
$$ n^{\frac{1}{2} -d } \Bigg( \frac{1}{n}\sum_{j=1}^n \bm{1}\left( \Xb_
{j}\leq \mathbf{x} \right) - p(\mathbf{x})\Bigg) \xrightarrow{\D} \N (0,\sigma^2)\;.$$
\end{proof}
In the following proposition we give a criteria used to verify  Assumption \eqref{eq:LIP-condition1} when we consider the process $\mathbf{X}_t:=(X_t, \ldots, X_{t+r-1})^\top$  for $X_t=\sum_{j=0}^\infty a_j Z_{t-j}$. More specifically, 
\begin{proposition}
\label{prop:Lipschitzness_gradient}
  Consider the process $X_t=\sum_{j=0}^\infty a_j Z_{t-j}$ with $a_0\neq 0$,  $(Z_j)_{j \in \NN}$  forming an independent and identically distributed sequence with $Z_1 $ admitting a continuous and bounded probability density function $f$. For $r\geq 1$, we define $\Xb_t =(X_t, \ldots, X_{t+r-1})^\top$. If $f$ admits a bounded derivative, then \eqref{eq:LIP-condition1} is satisfied for $\Xb_t$ with $s_0:=r$.
\end{proposition}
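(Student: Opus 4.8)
The plan is to verify \eqref{eq:LIP-condition1} by reducing all three required bounds (on $p_s$ and its first and second partials) to bounds on the derivatives of a \emph{single} smooth distribution function, uniformly in $s$, and then to establish those via the triangular structure already exhibited in \eqref{expression_density_of_X_MAIN}. First I would write $p_s(\mathbf{x}) = \PP(\Xb_{t,s}\leq \mathbf{x})$ with $\Xb_{t,s}=\sum_{j=0}^s A_j\mathbf{Z}_{t-j}$ and, for $s\geq s_0=r$, split the truncated process into two independent pieces
\[
\Xb_{t,s} = \mathbf{Y} + \mathbf{W}_{t,s}, \qquad \mathbf{Y} := \sum_{j=0}^{r-1}A_j\mathbf{Z}_{t-j}, \qquad \mathbf{W}_{t,s} := \sum_{j=r}^s A_j\mathbf{Z}_{t-j}.
\]
By \eqref{expression_density_of_X_MAIN} we have $\mathbf{Y}=B\,(Z_t,\ldots,Z_{t+r-1})^\top$ with $B$ the invertible lower-triangular matrix with diagonal entries $a_0\neq 0$, while $\mathbf{Y}$ and $\mathbf{W}_{t,s}$ depend on the disjoint innovation blocks $\{Z_t,\dots,Z_{t+r-1}\}$ and $\{Z_{t-1},Z_{t-2},\dots\}$ and are hence independent. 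Consequently $p_s(\mathbf{x}) = \EE\big[F_{\mathbf{Y}}(\mathbf{x}-\mathbf{W}_{t,s})\big]$, where $F_{\mathbf{Y}}$ is the distribution function of $\mathbf{Y}$.

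Once I show $F_{\mathbf{Y}}\in C^2$ with bounded derivatives up to order two, differentiating under the expectation (routine by dominated convergence) yields $\partial^\alpha p_s(\mathbf{x}) = \EE[\partial^\alpha F_{\mathbf{Y}}(\mathbf{x}-\mathbf{W}_{t,s})]$, and therefore the crucial \emph{uniform-in-$s$} estimate $\sup_{\mathbf{x}}\sup_{s\geq r}|\partial^\alpha p_s(\mathbf{x})| \leq \sup_{\mathbf{z}}|\partial^\alpha F_{\mathbf{Y}}(\mathbf{z})|$ for every $|\alpha|\leq 2$. This reduces \eqref{eq:LIP-condition1} entirely to bounding the derivatives of $F_{\mathbf{Y}}$, which no longer depend on $s$. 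To do so I would exploit that $B$ is lower-triangular: writing $\zeta_l:=Z_{t+l-1}$ (i.i.d.\ with density $f$) and assuming $a_0>0$ (the case $a_0<0$ being symmetric), the event $\{\mathbf{Y}\leq\mathbf{z}\}$ becomes $\zeta_k\leq\theta_k$ for all $k$, with affine upper limits $\theta_k = a_0^{-1}\big(z_k-\sum_{l<k}a_{k-l}\zeta_l\big)$, so that $F_{\mathbf{Y}}(\mathbf{z})=\int_{-\infty}^{\theta_1}f(\zeta_1)\cdots\int_{-\infty}^{\theta_r}f(\zeta_r)\,d\zeta_r\cdots d\zeta_1$. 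The bound $|p_s|\leq 1$ is immediate; differentiating in $z_i$ acts only on the limit $\theta_i$ and produces a single factor $a_0^{-1}f(\theta_i)$ against integrals of total mass at most one, giving $|\partial_i F_{\mathbf{Y}}|\leq \|f\|_\infty/|a_0|$. A mixed second derivative $\partial^2_{ij}F_{\mathbf{Y}}$ with $i\neq j$ (say $i<j$) hits the limit $\theta_j$ in the second step, yielding a boundary factor $a_0^{-1}f(\tilde\theta_j)$; the term is thus a product of two densities and $|\partial^2_{ij}F_{\mathbf{Y}}|\leq \|f\|_\infty^2/a_0^2$.

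The main obstacle is the pure second derivative $\partial^2_{ii}F_{\mathbf{Y}}$: expressed through the density $f_{\mathbf{Y}}$ it appears to require $\int |\partial_i f_{\mathbf{Y}}|$, which would force integrability of $f'$—an assumption we do not have (we only assume $f'$ bounded). The triangular representation circumvents exactly this. Differentiating $\partial_i F_{\mathbf{Y}}$ once more in $z_i$, the variable $z_i$ enters both through $f(\theta_i)$ and through the substituted limits $\tilde\theta_m$ for $m>i$. Differentiating $f(\theta_i)$ contributes a single term carrying one factor $a_0^{-2}f'(\theta_i)$, bounded by $\|f'\|_\infty/a_0^2$; differentiating each limit $\tilde\theta_m$ contributes a boundary term carrying \emph{two} density factors $f(\theta_i)\,f(\tilde\theta_m)$ against a bounded constant (proportional to $a_{m-i}/a_0^3$). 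Hence $\partial^2_{ii}F_{\mathbf{Y}}$ is a finite sum of terms each bounded using only $\|f\|_\infty$ and $\|f'\|_\infty$, with no integrability of $f'$ ever needed.

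Collecting the three orders gives a finite bound on $\sup_{\mathbf{z}}\big(|F_{\mathbf{Y}}(\mathbf{z})|+\sum_i|\partial_i F_{\mathbf{Y}}(\mathbf{z})|+\sum_{i,j}|\partial^2_{ij}F_{\mathbf{Y}}(\mathbf{z})|\big)$, and in particular confirms $F_{\mathbf{Y}}\in C^2$ as required for the reduction step. Feeding this into $\sup_{\mathbf{x}}\sup_{s\geq r}|\partial^\alpha p_s|\leq \sup_{\mathbf{z}}|\partial^\alpha F_{\mathbf{Y}}|$ establishes \eqref{eq:LIP-condition1} with $s_0=r$. I expect the only delicate point to be the treatment of $\partial^2_{ii}$ just described; the lower-triangularity of $B$ (equivalently $a_0\neq 0$, which isolates a fresh innovation $Z_{t+i-1}$ in each coordinate) is precisely what keeps every contribution a product of at most one $f'$ and several $f$'s, all uniformly bounded.
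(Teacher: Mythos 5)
Your proposal is correct, and it shares the paper's core strategy: everything is reduced to the distribution of the head $\mathbf{Y}=\sum_{j=0}^{r-1}A_j\mathbf{Z}_{t-j}=B\,(Z_t,\ldots,Z_{t+r-1})^\top$, where $a_0\neq 0$ and the lower-triangular structure of $B$ from \eqref{expression_density_of_X_MAIN} do all the work. The execution differs in two places. For the reduction to $s=r$, the paper iterates the one-step recursion $p_j(\mathbf{x})=\int p_{j-1}(\mathbf{x}-A_j\mathbf{t})\,d\mathcal{G}(\mathbf{t})$ and propagates the bounds by induction, whereas you condition on the entire tail $\mathbf{W}_{t,s}$ at once and write $p_s(\mathbf{x})=\EE[F_{\mathbf{Y}}(\mathbf{x}-\mathbf{W}_{t,s})]$; the two are equivalent (iterating the recursion yields exactly your identity), but your version makes the uniformity in $s$ immediate. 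For the base case, the paper changes variables through $B^{-1}$, bounds the product density $f_{\mathbf{Z}_{t,r}}$ (or its partials) by the density with one coordinate deleted, and integrates out using invertibility of submatrices of $B^{-1}$; you instead keep $F_{\mathbf{Y}}$ as an iterated integral with nested affine limits $\theta_k$ and differentiate the limits. Your decomposition of the diagonal second derivative $\partial^2_{ii}F_{\mathbf{Y}}$ --- one term carrying a single bounded factor $f'(\theta_i)$ plus boundary terms carrying two density factors --- is exactly right and, as you note, never requires integrability of $f'$; a side benefit of this route is that it only ever isolates the leading (fresh-innovation) variable of the triangular system, so you avoid having to argue invertibility of the non-principal minors that appear in the paper's treatment of the mixed derivatives. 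The only step to spell out in a full write-up is the differentiation under the expectation sign, which follows by dominating difference quotients with the uniform bounds on $\partial^\alpha F_{\mathbf{Y}}$ that you establish.
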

\begin{proof}[Proof of Proposition \ref{prop:Lipschitzness_gradient}]
It is enough to show that for any $\mathbf{x}=(x_0, \ldots, x_{r-1})^\top$, there exists a constant $\C>0$ independent of $\mathbf{x}$ such that, for all $j\geq r$
\begin{equation}
   \left| p_j(x_0, \ldots, x_{r-1})\right|\leq \C\;,\quad     \left| \nabla p_j(x_0, \ldots, x_{r-1})\right|\leq \C\, \, \text{and}\, \, \left| \nabla^2 p_j(x_0, \ldots, x_{r-1})\right|\leq \C\;. 
      \label{eq:bounded_derivatives}
  \end{equation}
The idea is to prove \eqref{eq:bounded_derivatives} by induction, using the following recursive relation, which holds for all $j\geq 1$ 
\begin{align*}
    p_{j}( \mathbf{x})=\int_{\RR}   p_{j-1} (\mathbf{x}-  A_j \mathbf{t} )  \, d\mathcal{G}(\mathbf{t})\;.  
\end{align*}
In fact, if \eqref{eq:bounded_derivatives} holds for $j=r$  for some constant $\C$ (under this assumption we can use the dominated convergence theorem since a constant is integrable with respect to $d\mathcal{G}(\cdot)$), then for all $j\geq r+1$
\begin{align*}
     \nabla p_{j}( \mathbf{x}) =&\int_{\RR} \nabla  p_{j-1}(\mathbf{x}-  A_j \mathbf{t} )\, d\mathcal{G}(\mathbf{t}) 
    \leq \C  \int_{\RR} \, d\mathcal{G}(\mathbf{t}) =\C\;,
\end{align*}
and similarly $\nabla^2 p_j (\mathbf{x}) \leq \mathcal{C}\;.$ So, we only have to prove the inductive base for $j=r$. We fix additional notation:
 \begin{enumerate}
  \item For any generic random variable $U$ in $\RR^r$, $f_U (u_0, \ldots, u_{r-1}) = f_U(\mathbf{u})$ denotes its probability density function in $\RR^r$. For instance, $f_{\Xb_{t,r}}(x_0,\ldots, x_{r-1})$ denotes the probability density function of $\Xb_{t,r}$ in $\RR^{r}$.
     \item The coordinates of a generic $m$-dimensional random vector $\mathbf{U}$  are denoted by $\mathbf{U}=(U^{(0)}, \ldots, U^{(m-1)})^\top$.  
     \item For a matrix  $A \in \RR^{a\times b}$, we denote by $A^{-(j)}_{-(i)}$ the sub-matrix obtaining by removing the $i$-th row and the $j-$th column of $A$. The symbol $A^{-(j_1, \ldots, j_{s_1})}_{-(i_1, \ldots, i_{s_2})}$ indicates the sub matrix (or minor) obtained by removing all columns and rows indexed by $(j_1, \ldots, j_{s_1})$ and $(i_1, \ldots, i_{s_2})$. For vectors we will not make a distinction between a row and column vector, and simply denote by $\mathbf{U}^{-(i)}$ the vector obtained by removing the $i-$th coordinate from $\mathbf{U}\;.$ 
     \item We define $ \Xb_{t,r} =  B \mathbf{Z}_{t,r} $ where $\mathbf{Z}_{t,r}:= ( Z_t, Z_{t+1}, \ldots, Z_{t+r-1})^{\top}$ and $B$ is invertible.  The matrix $B$ was defined in \eqref{expression_density_of_X_MAIN}. Since $B$ is invertible, by the transformation formula, $f_{\mathbf{X}_{t,r}}(x_0, \ldots, x_{r-1})=   f_{\mathbf{Z}_{t,r}} (B^{-1} \mathbf{x}) |\det(B^{-1})|\;.$
 \end{enumerate}
Observe that by exploiting the structure of $\mathbf{Z}_{t,r}$ we obtain the following: for all $\mathbf{v}=(v_0, \ldots, v_{r-1})^\top $ and for all $\ell=0,\ldots, r-1\;,$
\begin{align*}
    f_\mathbf{Z_{t,r}}(v_0, \ldots, v_{r-1})&= f_{Z_{t}} (v_0) \cdots  f_{Z_{t+r-1}}(v_{r-1})\\
    &\leq \C f_{Z_{t}}(v_0)\cdots f_{Z_{t+(\ell-1)}}(v_{\ell-1})f_{Z_{t+(\ell+1)}}(v_{\ell+1}) \cdots f_{Z_{t+r-1}}( v_{r-1})= \C f_{\mathbf{Z}_{t,r}^{-(\ell)}} \left( \mathbf{v}^{-(\ell)}\right)\;.
\end{align*}
Similarly, for the derivative,
\begin{align*}
    \partial_\ell  f_\mathbf{Z_{t,r}}(v_0, \ldots, v_{r-1})&= f_{Z_{t}}(v_0)\cdots f_{Z_{t+(\ell-1)}}(v_{\ell-1}) f'_{Z_{t+\ell}}(v_\ell) f_{Z_{t+(\ell+1)}}(v_{\ell+1})\ldots f_{Z_{t+(\ell+r-1)}}(v_{\ell+r-1})
    \leq  \C f_{\mathbf{Z}_{t,r}^{-(\ell)}}(\mathbf{v} ^{-(\ell)})\;. \\
\end{align*}
Since the previous two inequalities hold for any choice of $v_0, \ldots, v_{r-1}$, if $\K_{(0)},\ldots, \K_{(r-1)}$ are the rows of the matrix $B^{-1}$, for any $\ell =0, \ldots, r-1$ we have
\begin{align*}
   f_\mathbf{Z_{t,r}} ( \K \mathbf{u})=& f_\mathbf{Z_{t,r}} ( \K_{(0)} \cdot \mathbf{u}, \ldots, \K_{(r-1)} \cdot \mathbf{u}) \leq   \C f_{\mathbf{Z}_{t,r}^{-(\ell)}} ( \K_{(0)} \cdot \mathbf{u}, \ldots, \K_{(\ell-1)} \cdot \mathbf{u}\,,\, \K_{(\ell+1)} \cdot \mathbf{u},\ldots, \K_{(r-1)} \mathbf{u}) = \C f_{\mathbf{Z}_{t,r}^{-(\ell)}} ( \K_{-(\ell)} \mathbf{u})\;.
\end{align*}
Furthermore, by definition of the matrix-vector product, for all $\ell=0, \ldots, r-1$ we have
$$ \K_{-(\ell)} \mathbf{u} = \K_{-(\ell)} ^{-(\ell)}\mathbf{u}^{-(\ell)} + u^{(\ell)} \K^{(\ell)}_{-(\ell)}\;,$$
so that,
\begin{equation}
     f_\mathbf{Z_{t,r}} ( \K \mathbf{u}) \leq \C f_{\mathbf{Z}_{t,r}^{-(\ell)}} \left( \K_{-(\ell)} ^{-(\ell)}\mathbf{u}^{-(\ell)} + u^{(\ell)} \K^{(\ell)}_{-(\ell)} \right) \quad \text{for } \ell=0, \ldots, r-1\;.
\end{equation}
Lastly, note that $\K_{-(\ell)} ^{-(\ell)}$ is invertible for every $\ell=0,\ldots, r-1$. This follows since $\K^{(\ell)}_{\ell}$ s a submatrix of $B^{-1}$ and since $B$ is a lower triagular Toeplitz matrix. It can be shown that  $B^{-1}$ is of the form
$$B^{-1} =\left( \begin{array}{cccc}
     a_0^{-1}& & &   \\
    \ast  & a_{0}^{-1} & & \\
    \vdots & \ddots&\ddots & \\
    \ast&\cdots &\ast & a_0^{-1}
\end{array}\right)\;.$$
Every submatrix obtained by removing rows and columns corresponding to the same indexes, meaning $\K^{-(i_1,\ldots, i_h)}_{-(i_1,\ldots, i_h)}$, is also a lower triangular Toeplitz matrix, thus it is invertible. 

We are ready to bound the derivatives of $p_r$. 
\begin{enumerate}
    \item \textbf{Partial derivatives} for all $\ell=0, \ldots, r-1$
     \begin{align}
       \partial_\ell p_r(x_0, \ldots, x_{r-1})=& \int_{-\infty}^{x_0}\cdots \int_{-\infty}^{x_{\ell-1}} \int_{-\infty}^{x_{\ell+1}} \cdots  \int_{-\infty}^{x_{r-1}}\,   f_{\mathbf{Z}_{t,r}}(\K^{-(\ell)} \mathbf{u}^{-(\ell)} + x_\ell \K^{(\ell)} )\, \, du_0 \ldots du_{\ell-1}du_{\ell+1}\ldots du_{r-1}\;,  \\
       \leq & \C \int_{\RR^{r-1}} f_{\mathbf{Z}_{t,r}^{-(\ell)}}(\K^{-(\ell)}_{-(\ell)} \mathbf{u}^{-(\ell)} + x_\ell \K^{(\ell)}_{-(\ell)} ) d\mathbf{ u}^{-(\ell)} \leq \C\;.
    \end{align}
    The last inequality follows from the fact that $f_{\mathbf{Z}_{t,r}^{-(\ell)}}$  is integrable and $\K^{-(\ell)}_{-(\ell)} $ is invertible, thus a change of variables yields the result. 
    \item \textbf{Partial second derivatives} for any $\ell=0, \ldots, r-1$ and $s=0, \ldots, r-1$. Firstly, we study the case $s=\ell$, i.e
    \begin{align}
      & \partial^2_{\ell,\ell}p_r(x_0, \ldots,x_{r-1})\\=& \int_{-\infty}^{x_0}\cdots \int_{-\infty}^{x_{\ell-1}} \int_{-\infty}^{x_{\ell+1}} \cdots  \int_{-\infty}^{x_{\ell-1}}\, \int_{\RR} \left(  \sum_{j=0}^{r-1} \K_{j,\ell} \partial_j  f_{\mathbf{Z}_{t,r}}(\K^{-(\ell)} \mathbf{u}^{-(\ell)} + x_\ell \K^{(\ell)}_{-(\ell)} ) \right)\,  \, du_0 \ldots du_{\ell-1}du_{\ell+1}\ldots du_{r-1}\;  
    \end{align}
    thus, 
     \begin{align}
      \Bigg| \partial^2_{\ell,\ell}p_r (x_0, \ldots,x_{r-1})\Bigg| \leq &\C \sum_{j=0}^{r-1} \int_{\RR^{r-1}}  f_{\mathbf{Z}_{t,r}^{-(j)}}(\K^{-(\ell)} _{-(j)}\mathbf{u}^{-(\ell)} + x_\ell \K^{(\ell)}_{-(j)} )  d\mathbf{ u}^{-(\ell)} \leq \C \;.\;  
    \end{align}
    If $s\neq r$
    \begin{align}
       &\partial^2_{s,\ell}p_r(x_0, \ldots,x_{r-1})\\=& \int_{-\infty}^{x_0}\cdots \int_{-\infty}^{x_{s-1}} \int_{-\infty}^{x_{s+1}} \cdots \int_{-\infty}^{x_{\ell-1}} \int_{-\infty}^{x_{\ell+1}} \cdots  \int_{-\infty}^{x_{\ell-1}}\, f_{\mathbf{Z}_{t,r}}(\K^{-(\ell,s)} \mathbf{u}^{-(\ell)} + x_\ell \K^{(\ell)} + x_s \K^{(s)})\,  \,d\mathbf{u}^{-(\ell,s)}\\
\leq& \int_{\RR^{r-2}} f_{\mathbf{Z}_{t,r}^{-(\ell,s)}} \left( \K^{-(\ell,s)}_{-(\ell,s)} \mathbf{u}^{-(\ell,s)} + x_\ell \K^{(r)}_{-(\ell,s)} + x_s \K^{-(s)}_{-(\ell,s)} \right)  d\mathbf{u}^{-(\ell,s)} \leq \C\;.  
    \end{align}
\end{enumerate}
\end{proof}

\section{Reduction Principle}
\label{appendix:reduction_principle}
In this section we derive the reduction principle for multivariate linear processes that are long-range dependent. All the auxiliary results are listed in the subsection following the proof of Theorem \ref{theorem:reduction_principle}.
\begin{theorem}[Reduction Principle]
\label{theorem:reduction_principle}
Consider a multivariate linear process
\begin{align*}
    \mathbf{X}_t = \sum_{j=0}^\infty A_j \mathbf{Z}_{t-j}\;, \quad A_j \overset{j \to \infty}{\sim} j^{d-1} A_\infty \in   \text{GL}(\mathbb{R}, r)\;, \quad d \in \left(0, \frac{1}{2} \right)\;,
\end{align*}
with i.i.d.\ innovations \( (\mathbf{Z}_j)_{j \in \mathbb{Z}}\) with variance $\Sigma$ and moment condition \( \mathbb{E}[\|\mathbf{Z}_1\|^4] < \infty \). 
If there exists a positive integer \( s_0 \)  and a constant \( C > 0 \) such that
\begin{equation}
    \sup_{\bx\in \mathbb{R}^r} \, \max_{s\geq s_0} \left(  |p_s(\mathbf{x}) |
    + \sum_{i=1}^r
    | \partial_i p_s(\mathbf{x}) |
    + \sum_{i, j=1}^r
    | \partial^2_{i,j} p_s(\mathbf{x}) | \right) <\C,
    \label{eq:LIP-condition1}
\end{equation}
then, for any fixed vector  $\mathbf{x} \in \mathbb{R}^r$, 
\begin{equation}
    n^{\frac{1}{2} - d} \left| \frac{1}{n} \sum_{t=1}^n \bm{1}(  \mathbf{X}_t \leq \mathbf{x} )- p(\mathbf{x}) + \nabla p(\mathbf{x})  \bar{\mathbf{X}}_n \right| \xrightarrow{\mathbb{P}} 0 \;.
\label{uniform_principle}
\end{equation}
\end{theorem}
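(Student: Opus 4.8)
The plan is to establish the $L^2$ statement underlying \eqref{uniform_principle}. Since the prefactor equals $n^{1/2-d}\cdot n^{-1}$, Chebyshev's inequality reduces everything to showing that the centred remainder $Q_n:=\sum_{t=1}^n\bigl(\bm{1}(\mathbf{X}_t\le\mathbf{x})-p(\mathbf{x})+\nabla p(\mathbf{x})\mathbf{X}_t\bigr)$ obeys $\mathbb{E}[Q_n^2]=o(n^{1+2d})$, for then $\mathbb{P}(n^{-1/2-d}|Q_n|>\varepsilon)\le \mathbb{E}[Q_n^2]/(\varepsilon^2 n^{1+2d})\to 0$. Both pieces of $Q_n$ decompose into martingale differences along $(\mathcal{F}_t)$. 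For the empirical part I use \eqref{eq:martingale_decomposion}, $\bm{1}(\mathbf{X}_t\le\mathbf{x})-p(\mathbf{x})=\sum_{j\ge0}U_{t,j}$ with $U_{t,j}=p_j(\mathbf{x}-\mathbf{R}_{t,j})-p_{j+1}(\mathbf{x}-\mathbf{R}_{t,j+1})$, each $U_{t,j}$ being a martingale difference indexed by the fresh innovation $\mathbf{Z}_{t-j-1}$. For the linear part I write $\mathbf{X}_t=\sum_{\ell\le t}A_{t-\ell}\mathbf{Z}_\ell$, whose level-$\ell$ projection is $\nabla p(\mathbf{x})A_{t-\ell}\mathbf{Z}_\ell$; the whole argument amounts to showing that the leading martingale differences of the empirical part cancel $+\nabla p(\mathbf{x})\mathbf{X}_t$ level by level, leaving a negligible error.

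To expose this cancellation I would Taylor-expand. Using the recursion \eqref{eq:recursive_G}, $p_{j+1}(\cdot)=\int p_j(\cdot-A_{j+1}\mathbf{t})\,d\mathcal{G}(\mathbf{t})$, together with $\mathbf{R}_{t,j}=\mathbf{R}_{t,j+1}+A_{j+1}\mathbf{Z}_{t-j-1}$, I rewrite $U_{t,j}$ as the fluctuation of $p_j(\mathbf{x}-\mathbf{R}_{t,j+1}-A_{j+1}\mathbf{Z}_{t-j-1})$ about its mean over the fresh innovation, and expand $p_j$ to second order around $\mathbf{x}-\mathbf{R}_{t,j+1}$. Matching the resulting first-order term against the ideal $-\nabla p(\mathbf{x})A_{j+1}\mathbf{Z}_{t-j-1}$ yields a first-order error $E^{(1)}_{t,j}=-\bigl[\nabla p_j(\mathbf{x}-\mathbf{R}_{t,j+1})-\nabla p(\mathbf{x})\bigr]A_{j+1}\mathbf{Z}_{t-j-1}$ and a centred quadratic remainder $E^{(2)}_{t,j}$. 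Assumption \eqref{eq:LIP-condition1} furnishes the uniform bounds on $\nabla p_j$ and $\nabla^2 p_j$ for $j\ge s_0$ (the finitely many lags $j<s_0$ are handled by the crude bound $|U_{t,j}|\le 1$). Combining these with $\mathbb{E}\|\mathbf{R}_{t,j+1}\|^2\lesssim\sum_{i>j+1}\|A_i\|^2\sim(j+1)^{2d-1}$, $\|A_{j+1}\|\sim(j+1)^{d-1}$, the gradient gap $\|\nabla p_j(\mathbf{x})-\nabla p(\mathbf{x})\|\lesssim(j+1)^{d-1/2}$, and $\mathbb{E}\|\mathbf{Z}_1\|^4<\infty$ gives $\mathbb{E}[(E^{(1)}_{t,j})^2]\lesssim(j+1)^{4d-3}$ and $\mathbb{E}[(E^{(2)}_{t,j})^2]\lesssim(j+1)^{4d-4}$.

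The decisive step is the second-moment bookkeeping, and the key is to group by innovation level rather than by lag. Since martingale differences attached to distinct innovations are uncorrelated (Lemma \ref{lemma6.4}), I set $\ell=t-j-1$ and write $\mathbb{E}[Q_n^2]=\sum_\ell\mathbb{E}[G_\ell^2]$, where $G_\ell=\sum_{t}\bigl(E^{(1)}_{t,t-1-\ell}+E^{(2)}_{t,t-1-\ell}\bigr)$ gathers all residual terms sharing $\mathbf{Z}_\ell$. Inside a level the terms are correlated through $\mathbf{Z}_\ell$, so I bound $\sqrt{\mathbb{E}[G_\ell^2]}$ by Minkowski's inequality, $\sqrt{\mathbb{E}[G_\ell^2]}\le\sum_t\sqrt{\mathbb{E}[(E^{(1)}_{t,\cdot})^2]}+\sum_t\sqrt{\mathbb{E}[(E^{(2)}_{t,\cdot})^2]}$, and insert the per-term rates. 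Summing the level bounds over interior levels $1\le\ell\le n$ and over exterior levels $\ell\le 0$ (large lags, where the within-level sums are thin tails) gives $\mathbb{E}[Q_n^2]=O\!\left(n^{\max(1,4d)}\right)$, which is $o(n^{1+2d})$ for every $d\in(0,1/2)$ because $\max(1,4d)<1+2d$ on that range.

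I expect this variance bookkeeping, and in particular the choice of grouping, to be the main obstacle. Grouping by lag instead — using that $\{E^{(\cdot)}_{t,j}\}_t$ are orthogonal across $t$ for fixed $j$ and then applying the triangle inequality over $j$ — only yields the bound $\sqrt{n}\sum_{j\ge0}(j+1)^{2d-3/2}$, whose series diverges once $d\ge 1/4$; it is precisely the level-based grouping, which spends the orthogonality on the $\Theta(n)$ innovation levels while keeping the within-level sums over $t$ summable or slowly growing, that beats the long-range-dependent partial-sum rate $n^{1+2d}$. Two further technical points I would need to secure are the gradient-gap rate $\|\nabla p_j(\mathbf{x})-\nabla p(\mathbf{x})\|\lesssim(j+1)^{d-1/2}$, obtained from \eqref{eq:recursive_G} together with the uniform derivative bounds of \eqref{eq:LIP-condition1}, and the separate, elementary treatment of the finitely many low-order lags $j<s_0$. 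Once $\mathbb{E}[Q_n^2]=o(n^{1+2d})$ is in place, Chebyshev's inequality delivers \eqref{uniform_principle}.
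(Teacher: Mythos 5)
Your proposal follows essentially the same route as the paper's proof: the same martingale decomposition and second-order Taylor expansion that splits the remainder into a gradient-gap term, a centred quadratic term, and finitely many low-order lags (the paper's $T_n^{(2)}$, $T_n^{(1)}$, $T_n^{(3)}$), the same orthogonality across innovation levels ($t-j=t'-j'$, Lemma \ref{lemma6.4}), the same moment inputs ($\mathbb{E}\|\mathbf{R}_{t,j}\|^2\lesssim j^{2d-1}$, $\|A_j\|\sim j^{d-1}$, bounded Hessians from \eqref{eq:LIP-condition1}), and the same conclusion $\mathbb{E}[Q_n^2]=O(n^{\max(1,4d)})=o(n^{1+2d})$ followed by Chebyshev. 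Your level-based grouping with Minkowski is merely a reorganization of the paper's pairwise Cauchy--Schwarz bound combined with the summation Lemma \ref{lemma6.5}, so the two arguments coincide in substance.
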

\begin{proof}[Proof of Theorem \ref{theorem:reduction_principle}] 
For a fixed $\mathbf{x} \in \RR^{r},$ set
$$S_n(\mathbf{x}) := \sum_{t=1}^n\bm{1} \left(\{\Xb_t \leq \mathbf{x} \}\right) - np(\mathbf{x}) + n\nabla p(\mathbf{x})  \Bar{\Xb}_n   \;,$$
so that $(\ref{uniform_principle})$ is equivalently expressed as $n^{- (\frac{1}{2}+d) } |S_n(\mathbf{x}) |\xrightarrow{\PP}0.$ Hence, 
\[S_n (\mathbf{x})= \sum_{t=1}^n \sum_{s=1}^{\infty} \Big( p_{s-1}(\mathbf{x} -\mathbf{R}_{t,s-1}) - p_{s}(\mathbf{x} -\mathbf{R}_{t,s}) \Big) + \nabla p(\mathbf{x})n\Bar{\Xb}_n \;.\]
The leading term of the main sum is the pointwise difference of \textit{nearly} the same functions $p_s, p_{s-1}$ evaluated at two close points. 
We split $S_n(\mathbf{x})$ in three sums,
\begin{align}
\begin{split}
S_n(\mathbf{x})=&\sum_{t=1}^n\sum_{s=1}^{\infty} \Big( p_{s-1}(\mathbf{x} -\mathbf{R}_{t,s-1}) - p_{s}(\mathbf{x} -\mathbf{R}_{t,s}) \Big) + \nabla p(\mathbf{x})n\Bar{\Xb}_n \\
=&\sum_{t=1}^n \sum_{s=1}^{\infty} \Big( p_{s-1}(\mathbf{x} -\mathbf{R}_{t,s-1}) - p_{s}(\mathbf{x} -\mathbf{R}_{t,s}) \pm \bm{1}_{\{s\geq r\}}\nabla p_{s-1}(\mathbf{x} -\mathbf{R}_{t,s}) A_s \mathbf{Z}_{t-s}\Big) +\nabla p(\mathbf{x}) \sum_{t=1}^n \sum_{s=1}^{\infty} A_s \mathbf{Z}_{t-s} \\
=&\underbrace{\sum_{t=1}^n \sum_{s=1}^{\infty} \Big( p_{s-1}(\mathbf{x} -\mathbf{R}_{t,s-1}) - p_{s}(\mathbf{x} -\mathbf{R}_{t,s}) + \bm{1}_{\{s\geq r\}}\nabla p_{s-1}(\mathbf{x} -\mathbf{R}_{t,s})  A_s \mathbf{Z}_{t-s} \Big)}_{=:T_n^{(1)}(\mathbf{x})} \\ 
&-\sum_{t=1}^n \sum_{s=1}^{\infty}  \bm{1}_{\{s\geq r\}}\nabla p_{s-1}(\mathbf{x} - \mathbf{R}_{t,s})  A_s \mathbf{Z}_{t-s}+\nabla p(\mathbf{x})  \sum_{t=1}^n\left( \sum_{s=0}^{r-1}A_s\mathbf{Z}_{t-s}+\sum_{s=r}^{\infty} A_s \mathbf{Z}_{t-s} \right) \\
=& T_n^{(1)}(\mathbf{x})+ \underbrace{\sum_{t=1}^n\sum_{s=r}^{\infty} \Big( \nabla p(\mathbf{x})  -\nabla p_{s-1}(\mathbf{x} - \mathbf{R}_{t,s})\Big)^{\top} A_s \mathbf{Z}_{t-s}}_{=:T_n^{(2)}(\mathbf{x})} + \underbrace{\nabla p(\mathbf{x})   \sum_{t=1}^n \sum_{s=0}^{r-1} A_s \mathbf{Z}_{t-s}}_{=:T_n^{(3)}(\mathbf{x})} \;.
\end{split}
\label{T}
\end{align}
To conclude that $ n^{-( \frac{1}{2}+d) } |S_n (\mathbf{x})|\xrightarrow{\PP}0$, it is enough to show that  $ n^{-( \frac{1}{2}+d) } |T_n^{(m)} (\mathbf{x})|\xrightarrow{\PP}0$ for $m=1, 2,3.$ In fact, by Chebyshev's inequality, for every fixed $\varepsilon >0,$
$$ \PP\left(  |T_n^{(m)} (\mathbf{x})|> \varepsilon n^{d+1/2}\right) \leq \frac{\text{Var}(T_n^{(m)} (\mathbf{x}))}{\varepsilon^2 n^{2d+1}}\xrightarrow{n\to \infty }0, \quad \text{for }m=1,2,3\;.$$

To complete the proof, we will now show that there exists $\xi>0$ sufficiently small such that
\begin{enumerate}
    \item[(i)]  $ \Var\left(T_{n}^{(1)}(\mathbf{x})\right)  = \mathcal{O}( n^{2d})$
    \item[(ii)]  $ \Var\left(T_{n}^{(3)}(\mathbf{x})\right)  = \mathcal{O}( n^{})$
    \item[(iii)]  $\Var\left(T_{n}^{(2)}(\mathbf{x})\right) = \begin{cases}
        \mathcal{O}(n^{}) \quad \text{for }d\in (0,1/4)\\
        \mathcal{O}(n^{4d+\zeta}) \quad \text{for }d\in [1/4,1/2)\\
    \end{cases}\;.$
\end{enumerate}
\textit{Proof for (i):} Defining
\begin{align*}
K_{t,j}(\mathbf{x}) := & p_{j-1}(\mathbf{x} - \mathbf{R}_{t,j-1}) - p_{j}(\mathbf{x} - \mathbf{R}_{t,j}) + \bm{1}_{\{j\geq r\}} \nabla p_{j-1}(\mathbf{x} - \mathbf{R}_{t,j})   A_j \mathbf{Z}_{t-j}\;,
\end{align*}
we have $T_{N}^{(1)}(\mathbf{x}) = \sum_{t=1}^{n} \sum_{j=1}^{\infty } K_{t,j}(\mathbf{x})\;.$  From Lemma \ref{lemma6.4}, $ \EE\left[K_{t,j}(\mathbf{x})  K_{t',j'} (\mathbf{x})\right]=0$ whenever $j'\neq t'-t+j$. 
For the product $ \EE\left[K_{t,j} (\mathbf{x}) K_{t',t'-t+j}(\mathbf{x})\right]$, using Lemma \ref{upperbound_rest} (which can be applied because of assumption \eqref{eq:LIP-condition1}), there exist positive constants $\C_1,\C_2, \C_3, \C_4$ such that 
\begin{align*}
    K_{t,j} (\mathbf{x})  K_{t',t'-t+j}(\mathbf{x}) \leq \C_1 \C_3 \|A_j\|^2\|A_{t'-t+j}\|^2 \left( \|\mathbf{Z}_{t-j}\|^2\|\mathbf{Z}_{t'-(t'-t+j)}\|^2 +\C_4\| \mathbf{Z}_{t-j}\|^2 +\C_2 \| \mathbf{Z}_{t'-(t'-t+j)}\|^2 +\C_2\C_4 \right) \;.
\end{align*}
Therefore, by taking expectation on both sides and using the fact that $A_j \sim j^{d-1} A_\infty$ and that the forth moment of $\mathbf{Z}_1$ is bounded by assumption, we obtain $  \EE\left[K_{t,j} (\mathbf{x}) K_{t',t'-t+j}(\mathbf{x})\right] \leq \C j^{2(d-1)} (j+ t'-t)^{2(d-1)}\;.$ Using the arguments above and because of $d<1$,
\begin{align*}
 \Var \left( T_{n}^{(1)}(\mathbf{x})\right) & \leq \EE\left[ \left( \sum_{t=1}^{n} \sum_{j=1}^{\infty} K_{t,j}\right)^2 \right] = \mathbb{E}\left[\sum_{t=1}^{n} \sum_{j=1}^{\infty} \sum_{t'=1}^{n} \sum_{j'=1}^{\infty} K_{t,j}  K_{t',j'}\right] \\
& = \,\sum_{t'=1}^{n} \sum_{j=1}^{\infty} \sum_{t=1}^{\min\{n,  (t'+j-1)\}} \EE\left[K_{t,j}  K_{t',t'-t+j}\right] \leq \C \,\sum_{t'=1}^{n}  \sum_{t=1}^{n} \sum_{j=1}^{\infty} j^{2(d-1)} (j+ |t'-t|)^{2(d-1)}\;  \quad \\
& \leq \C \,\sum_{t'=1}^{n}  \sum_{t=1}^{n}  (t'-t)^{2(d-1)} 
 \leq  \C n^{2d }, \quad \text{using} \ |t'-t|\leq n \;.\\
\end{align*}
\textit{Proof for (ii):}  By definition of $T_n^{(3)}(\mathbf{x})$ and the Cauchy-Schwarz inequality, $\left(T_n^{(3)}(\mathbf{x})\right)^2= (\nabla p (\mathbf{x})  \sum_{t=1}^{n}\sum_{i=1}^{r-1} A_i \mathbf{Z}_{t-i})^2\leq \C \| \nabla p(\mathbf{x})  \|^2   \|\sum_{t=1}^{n} \sum_{i=1}^{r-1} A_i\mathbf{Z}_{t-i} \|^2\leq \C \sum_{t,t'=1}^n \sum_{i,j=1}^{r-1} \langle A_i \mathbf{Z}_{t-i}, A_j\mathbf{Z}_{t'-j}\rangle.$ Moreover, using the independence of the sequence $(\mathbf{Z}_j)_{j \in \NN},$ $\Var\left(T_{n}^{(3)}(\mathbf{x})\right) \leq \EE[T_{n}^{(3)}(\mathbf{x})^2]\leq \mathcal{C} n.$

\textit{Proof for (iii):} By definition, $T_{n}^{(2)}(\mathbf{x}) =\sum_{t=1}^{n} \sum_{j=r}^{\infty } h_{t,j} A_j \mathbf{Z}_{t-j}$ with
\begin{align*}
    h_{t,j}(\mathbf{x}) := \left( \nabla p(\mathbf{x} - \mathbf{R}_{t,j}) - \nabla p_{j-1}(\mathbf{x} - \mathbf{R}_{t,j}) \right)^\top\;.
\end{align*}
By the convention established at the beginning of this article, gradients are considered row vectors. Therefore, we obtain 
\begin{align*}
    \Var \left(T_{n}^{(2)}(\mathbf{x})\right)  \leq \, &    \mathbb{E}\Big( \sum_{t=1}^{n} \sum_{j=r}^{\infty} h_{t,j}(\mathbf{x}) A_j\mathbf{Z}_{t-j}\Big)^2  \\
  \leq &    \sum_{t=1}^{n} \sum_{j=r}^{\infty} \sum_{t'=t}^{n} \EE\Big( h_{t,j}(\mathbf{x}) A_j\mathbf{Z}_{t-j}  h_{t',j'}(\mathbf{x}) A_{j'}\mathbf{Z}_{t'-j'}\Big)  \;,\quad t'-t+j=j'\;,
\end{align*}
where we used again the fact that if $t-j \neq t' - j' $, then
\[ \EE\Big(h_{t,j}(\mathbf{x})A_{j} \mathbf{Z}_{t-j}h_{t',j'}(\mathbf{x})A_{j'} \mathbf{Z}_{t'-j'} \Big) = 0 \;. \]
In fact,  $R_{t,j}= \sum_{i=j+1}^\infty A_i \mathbf{Z}_{t-i} = \tilde{f}(\mathbf{Z}_{t-j-1},\mathbf{Z}_{t-j-2}, \ldots) $ for some function $\tilde{f}$. Thus, 
$h_{t,j}(\mathbf{x}) A_j \mathbf{Z}_{t-j}= f_1( \mathbf{Z}_{t-j-1},\mathbf{Z}_{t-j-2}, \ldots) A_j \mathbf{Z}_{t-j}$  and $\sigma(\mathbf{Z}_{t-j})$ is independent from $\F_{t-j-1}= \sigma (  \mathbf{Z}_{t-j-1},\mathbf{Z}_{t-j-2}, \ldots)\;. $ Then, if $t-j \neq t'-j'$ (we can assume $t'-j'<t-j$)
\begin{align*}
   & \EE[h_{t,j}(\mathbf{x}) A_j\mathbf{Z}_{t-j} h_{t',j'}(\mathbf{x}) A_{j'}\mathbf{Z}_{t'-j'} ]= \EE [ f_1( \mathbf{Z}_{t-j-1},\mathbf{Z}_{t-j-2}, \ldots) A_j \mathbf{Z}_{t-j} f_2( \mathbf{Z}_{t'-j'-1},\mathbf{Z}_{t'-j'-2}, \ldots) A_{j'} \mathbf{Z}_{t'-j'}  ]\\
    =& \EE \left[  \EE\left[  f_1( \mathbf{Z}_{t-j-1},\mathbf{Z}_{t-j-2}, \ldots) A_j \mathbf{Z}_{t-j} f_2( \mathbf{Z}_{t'-j'-1},\mathbf{Z}_{t'-j'-2}, \ldots) A_{j'} \mathbf{Z}_{t'-j'}\, \Big| \, \F_{t'-j'}\right]  \right]\\
     =& \EE \left[  f_2( \mathbf{Z}_{t'-j'-1},\mathbf{Z}_{t'-j'-2}, \ldots) A_{j'} \mathbf{Z}_{t'-j'} \EE\left[  f_1( \mathbf{Z}_{t-j-1},\mathbf{Z}_{t-j-2}, \ldots) A_j \mathbf{Z}_{t-j} \, \Big| \, \F_{t'-j'}\right]  \right]\\
     =& \EE \left[  f_2( \mathbf{Z}_{t'-j'-1},\mathbf{Z}_{t'-j'-2}, \ldots) A_{j'} \mathbf{Z}_{t'-j'} A_j \EE\left[\mathbf{Z}_{t-j} \, \Big| \, \F_{t'-j'}\right] 
     \EE\left[  f_1( \mathbf{Z}_{t-j-1},\mathbf{Z}_{t-j-2}, \ldots)  \, \Big| \, \F_{t'-j'}\right]  \right]\\
      =& \EE \left[  f_2( \mathbf{Z}_{t'-j'-1},\mathbf{Z}_{t'-j'-2}, \ldots) A_{j'} \mathbf{Z}_{t'-j'} A_j \EE\left[\mathbf{Z}_{t-j} \right] 
     \EE\left[  f_1( \mathbf{Z}_{t-j-1},\mathbf{Z}_{t-j-2}, \ldots)  \, \Big| \, \F_{t'-j'}\right]  \right]=0\;.\\
\end{align*}
Now, for an argument used a few times by now
\begin{align*}
p(\mathbf{x}) &  = \mathbb{P}( \Xb_{t,j-1}+ \mathbf{R}_{t,j-1}  \leq \mathbf{x} )  = \int \mathbb{P}( \mathbf{X}_{t,j-1} \leq \mathbf{x} - \mathbf{t})dF_{ \mathbf{R}_{t,j-1}}(\mathbf{t}) = \int  p_{j-1}(\mathbf{x} - \mathbf{t})dF_{ \mathbf{R}_{t,j-1}}(\mathbf{t})\;,
\end{align*}
hence, by applying the dominated convergence theorem  (The partial derivatives are bounded by assumption by a constant, and every constant is integrable with respect to the measure $dF_{\mathbf{R}_{t,j-1}}$)
\begin{align*}
h_{t,j}(\mathbf{x}) & =  \int \Big( \nabla p_{j-1}(\mathbf{x} - \mathbf{t}) - \nabla p_{j-1}(\mathbf{x} -\mathbf{R}_{t,j})\Big) dF_{  \mathbf{R}_{t,j-1}}(\mathbf{t})\;.
\end{align*}
Therefore, applying the Cauchy-Schwarz inequality and Jensen's inequality, we obtain
 \begin{align*}
 \Bigl|  h_{t,j}( \mathbf{x})A_{j} \mathbf{Z}_{t-j} \Bigr|  =&  \bigl| \langle h_{t,j}( \mathbf{x})\;, A_{j} \mathbf{Z}_{t-j} \rangle \bigl| \\
 \leq& \C \| A_{j} \mathbf{Z}_{t-j} \|  \int  \| \mathbf{R}_{t,j}-\mathbf{t} \|  dF_{ \mathbf{R}_{t,j-1}}(\mathbf{t})\\
\leq&  \C \| A_{j} \mathbf{Z}_{t-j} \|  \left(\, \int  \| \mathbf{R}_{t,j}\| +\|\mathbf{t} \|  dF_{ \mathbf{R}_{t,j-1}}(\mathbf{t}) \right) \\
\leq&  \C \| A_{j} \mathbf{Z}_{t-j} \|  \big( \| \mathbf{R}_{t,j}\|+\EE[ \|  \mathbf{R}_{t,j-1} \|]\big)\;.\\
\end{align*}
Applying these upper bounds for $t-j=t'-j'$ ( this latter condition on the indexes implies that both $\mathbf{R}_{t,j}$  and $\mathbf{R}_{t,j}$ are $\F_{t-j}-$measurable, i.e  $\mathbf{R}_{t,j} = \tilde{f}_1(\mathbf{Z}_{t-j-1}, \mathbf{Z}_{t-j-2}, \ldots)$ and  $\mathbf{R}_{t',j'} = \tilde{f}_2(\mathbf{Z}_{t-j-1}, \mathbf{Z}_{t-j-2}, \ldots)$ )
\begin{align*}
&   \EE\Big[ \left( h_{t,j}( \mathbf{x})A_j \mathbf{Z}_{t-j} \right)\left(   h_{t',j'}( \mathbf{x})  A_{j'} \mathbf{Z}_{t'-j'}\right) \Big] \\
&\leq \EE \left[  \Bigl|  h_{t,j}( \mathbf{x}) A_{j} \mathbf{Z}_{t-j} \Bigr| \,  \Bigl|  h_{t',j'}( \mathbf{x}) A_{j'} \mathbf{Z}_{t'-j'} \Bigr| \right]\\
&\leq  \C \, \|A_j \| \cdot  \|A_{j'} \| \, \EE\Big[ \|\mathbf{Z}_{t-j}\|^2 \|\mathbf{Z}_{t'-j'}\|^2 \left(\| \mathbf{R}_{t,j}\|+\EE[ \|  \mathbf{R}_{t,j} \|] \right)\cdot \left( \| \mathbf{R}_{t',j'}\|+\EE[ \|  \mathbf{R}_{t',j'} \|]\right)\Big]\\
&\leq  \C \, \|A_j \| \cdot  \|A_{j'} \| \, \EE\Big[ \|\mathbf{Z}_{t-j}\|^4\Big] \EE\Big[  \left(\| \mathbf{R}_{t,j}\|+\EE[ \|  \mathbf{R}_{t,j} \|] \right)\cdot \left( \| \mathbf{R}_{t',j'}\|+\EE[ \|  \mathbf{R}_{t',j'} \|]\right)\Big] \quad \text{independence}  \\
& \leq  \C \, \|A_j \| \cdot  \|A_{j'} \| \, \left( \EE\Big[ \|  \mathbf{R}_{t,j}\| \cdot  \| \mathbf{R}_{t',j'}\| \Big] 
+3\EE[ \|  \mathbf{R}_{t,j} \|]\cdot \EE[ \|  \mathbf{R}_{t',j'} \|] \right)  \\
&\leq  \C \, \|A_j \| \cdot  \|A_{j'} \| \,  4  \sqrt{\EE[ \|  \mathbf{R}_{t,j}\|^2]}\sqrt{\EE[ \|  \mathbf{R}_{t',j'}\|^2]} \quad \text{Cauchy-Schwarz}\;.
\end{align*}
Now, we know that the special structure of the coefficients of $\Xb_t$ which ensures  long-range dependent behavior, implies by Lemma \ref{lemma:upper_bound_residuals_R_{t,j}}, there exists $\zeta>0$ sufficiently small such that 
$\mathbb{E} \| \mathbf{R}_{t,j} \|^2 \leq \mathcal{C}  j^{2d -1+\zeta}$ and therefore,
\begin{align*}
    \EE\Big[    h_{t,j}( \mathbf{x}) A_j \mathbf{Z}_{t-j} ,    h_{t',j'}( \mathbf{x})  A_{j'} \mathbf{Z}_{t'-j'} \Big]
&\leq \C \|A_j\| \cdot  \|A_{j'} \|  \sqrt{\EE[ \|  \mathbf{R}_{t,j}\|^2]}\sqrt{\EE[ \|  \mathbf{R}_{t',j'}\|^2]} \\
 &\leq \C  \|A_j \| \cdot  \|A_{j'} \|   j^{d-1/2+\zeta/2} j'^{ d-1/2+\zeta/2} \\
 & \leq C \|A_{j} \| \cdot \| A_{j'} \| (j j')^{d-1/2+\zeta/2} \;.
\end{align*}
Taking into account that $j' = (t'-t)+j $, and that 
 $A_j \sim j^{d-1}A_\infty$, we find 
\begin{align*}
    \Var(T_{n}^{(2)})  \leq &\C \sum_{t=1}^{n} \sum_{t'=t}^{n} \sum_{j=1}^{\infty} \|A_j\| \cdot \| A_{j'}\| (j j')^{d-1/2+\zeta/2}\\
    \leq &\C \sum_{t=1}^{n} \sum_{t'=t}^{n} \sum_{j=1}^{\infty} (j((t'-t)+j))^{d-1+\zeta/2}(j((t'-t)+j))^{d-1/2+\zeta/2} \\
    \leq &\C n \sum_{t=1}^{n} \sum_{j=1}^{\infty} (j((n-t)+j))^{d-1+\zeta/2}(j((n-t)+j))^{d-1/2+\zeta/2} \\
  = & \C  n \sum_{t=0}^{n-1} \sum_{j=1}^\infty (j(t+j))^{d-1+\zeta/2}(j(t+j))^{d-1/2+\zeta/2} \\
  \leq & \C n \sum_{t=1}^n\sum_{j=1}^\infty (j(t+j))^{2d -3/2+\zeta} \;.
\end{align*}
For $d\in (0,1/4),$ we can choose $\zeta$ sufficiently small such that $\gamma := 3/2-2d-\zeta > 1$ and by Lemma \ref{lemma6.5},
     $$n \sum_{t=1}^n \sum_{j=1}^\infty (j(t+j))^{2d -3/2+\zeta}\lesssim  n\sum_{t=1}^n t^{-\gamma} < n\sum_{t=1}^\infty t^{-\gamma }< \mathcal{O}(n)\;. $$
For $d\in [1/4, 1/2),$ we can choose $\zeta$ sufficiently small such that $\gamma := 3/2-2d-\zeta \in (1/2,1)$ and by Lemma \ref{lemma6.5},

     $$n \sum_{t=1}^n \sum_{j=1}^\infty (j(t+j))^{2d -3/2+\zeta}\leq n\sum_{t=1}^n t^{-2\gamma+1} \sim n\int_1^n   t^{-2\gamma+1 }dt = n \mathcal{O}(n^{2-2\gamma}) = \mathcal{O}(n^{4d+2\zeta})\;. $$
     To simplify the expression, we can set $\zeta'\leftarrow 2\zeta$, and rename $\zeta'$ as $\zeta$.
\end{proof}

\subsection{Auxiliary results for reduction principle} 
\begin{lemma} Let $\Xb_t=\sum_{j=0}^\infty A_j \mathbf{Z}_{t-j}$ with $A_j \overset{j \to \infty }{\sim} j^{d-1}L(j)$ with $d\in (0,1/2)$, $L$ is a continuous slowlying varying matrix, and $(\mathbf{Z}_j)_{j \in \ZZ}\overset{i.i.d.}{\sim} \W\N (\mathbf{0},\Sigma)$. For all $0<\zeta<|2d-1|$, the following upper bound for  $ \mathbf{R}_{t,j} $ holds:
\label{lemma:upper_bound_residuals_R_{t,j}}
\begin{equation}
    \label{upper-bound-residuals-of-X_t-tilde}
    \mathbb{E} \left[\| \mathbf{R}_{t,j} \|^2 \right] \leq  \mathcal{C}\,  j^{2d -1+\zeta} \;,
\end{equation}
where $\C>0$ is a constant.
\end{lemma}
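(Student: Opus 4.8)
The plan is to exploit the independence of the innovations to turn the second moment of the residual into a convergent series of matrix norms, and then to estimate the tail of that series using the regular variation of $\|A_i\|$. Since $\mathbf{R}_{t,j}=\sum_{i=j+1}^\infty A_i\mathbf{Z}_{t-i}$ is a sum of independent, centred terms, all cross-covariances vanish, and so
\begin{equation*}
\mathbb{E}\big[\|\mathbf{R}_{t,j}\|^2\big]=\sum_{i=j+1}^\infty \mathbb{E}\big[\|A_i\mathbf{Z}_{t-i}\|^2\big]=\sum_{i=j+1}^\infty \operatorname{tr}\!\big(A_i\Sigma A_i^\top\big).
\end{equation*}
Bounding the trace via $\operatorname{tr}(A_i\Sigma A_i^\top)\le \|\Sigma\|\,\operatorname{tr}(A_iA_i^\top)\le r\|\Sigma\|\,\|A_i\|^2$ (operator norm), the problem reduces to controlling the tail sum $\sum_{i>j}\|A_i\|^2$. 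Note that this step uses only the existence of $\Sigma$, i.e.\ the second moment of $\mathbf{Z}_1$, and not the fourth-moment hypothesis needed elsewhere.

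Next I would use the assumption $A_i\sim i^{d-1}L(i)$ with $L$ slowly varying to write $\|A_i\|\le C\,i^{d-1}\ell(i)$ for all large $i$, where $\ell(i):=\|L(i)\|$ is continuous and slowly varying. Because $d<1/2$ forces $2(d-1)<-1$, the series converges, and the summation form of Karamata's theorem gives
\begin{equation*}
\sum_{i=j+1}^\infty \|A_i\|^2 \le C\sum_{i>j} i^{2d-2}\ell(i)^2 \sim \frac{C}{1-2d}\, j^{2d-1}\ell(j)^2 \qquad (j\to\infty).
\end{equation*}
The finitely many initial indices where the asymptotic equivalence has not yet taken effect contribute only a finite additive constant, which is harmless since $j^{2d-1}$ does not decay over any bounded range of $j$.

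Finally I would absorb the slowly varying factor by Potter's bound: for every $\zeta>0$ there is a constant $C_\zeta$ with $\ell(j)^2\le C_\zeta\, j^{\zeta}$ for all large $j$, which yields $\mathbb{E}[\|\mathbf{R}_{t,j}\|^2]\le \mathcal{C}\, j^{2d-1+\zeta}$, as claimed. The restriction $0<\zeta<|2d-1|=1-2d$ is not needed to run the argument; it merely ensures that the exponent $2d-1+\zeta$ stays negative, so the bound is a genuinely decaying quantity as required by the reduction-principle proof. The only real technical point is the clean handling of the slowly varying factor—applying Karamata to the tail and Potter's bound to trade $\ell$ for an arbitrarily small polynomial factor; everything else follows from the orthogonality of the independent innovation contributions.
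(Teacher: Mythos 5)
Your proof is correct and follows essentially the same route as the paper's: both reduce $\mathbb{E}\|\mathbf{R}_{t,j}\|^2$ to the tail sum $\sum_{i>j}\|A_i\|^2$ via the orthogonality of the independent centred innovations, and then control that tail using the regular variation of the coefficients. The only (harmless) difference is the order of the last two steps — you apply Karamata's tail theorem first and Potter's bound to the resulting factor $\ell(j)^2$, which is why your version does not need $\zeta<1-2d$, whereas the paper first bounds the slowly varying factor by $\varepsilon i^{\zeta}$ termwise (via the Karamata representation plus a compactness argument on the initial segment) and therefore does need $\zeta<|2d-1|$ for the comparison integral $\int_j^\infty x^{2d-2+\zeta}\,dx$ to converge.
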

\begin{proof} In the following, we will denote with $\C$ a generic constant. Observe that $(\mathbf{Z}_j)_{j \in \ZZ}$ are orthogonal in $L^2(\Omega, \PP)$ with respect to the scalar product $\langle X,Y \rangle:=\EE[XY]$ (note that $\EE[\mathbf{Z}_i]=\mathbf{0} $ for all $i$)    because $\mathbf{Z}_i$ are independent, and therefore $\mathbb{E} \| \mathbf{R}_{t,j} \|^2 \leq \sum_{i >  j} \mathbb{E} \| A_i \mathbf{Z}_{t-i} \|^2 $. Moreover, by the assumptions on the coefficients $A_j$ and boundedness of the variance of $\| \mathbf{Z}_j\| $, we have 
     \begin{align*}
\mathbb{E} \| \mathbf{R}_{t,j} \|^2 \leq \sum_{i >  j} \mathbb{E} \| A_i \mathbf{Z}_{t-i} \|^2 \leq \C  \sum_{i >  j} i^{-2(1-d)} \|L(i)\|^2  \;.
\end{align*} 
We would like to affirm that there exists a constant $\C>0$, such that $\|L(i)\|^2\leq \C i^{\zeta}$ for all $i\geq j$. Note two things:
\begin{enumerate}
    \item $L(\cdot)$ is a matrix whose entries are slowlying varying functions. By the principle of equivalence of norm on finite vector spaces, there exists $\C>0$, such that $\| \cdot \| \leq \C \| \cdot \|_2 $. Further, $\| \cdot \|_2 \leq \| \cdot \|_F^2$, where $ \| \cdot \|_F^2$ is the Frobenius norm, i.e $ \| A \|_F^2 = \left( \sum_{hs}A_{hs}^2\right)^{1/2}$. Thus, 
    $$ \| L(\cdot)\|^2 \leq \C\sum_{h=1}^r\sum_{s=1}^r |L_{hs}(\cdot)|^2 \overset{\text{def}}{=}\mathcal{L}(\cdot)\;.$$
    \item  $\mathcal{L}(\cdot)$ is slowlying varying at infinity, since the square of a slowlying varying function is a slowly varying function, and the sum of slowlying varying functions is  still slowlying varying. Lastly, $\mathcal{L}$ is continuous, as composition of its continuous entries $L_{hs}$. 
\end{enumerate}
In conclusion, $\mathcal{L}(\cdot)$ is slowying varying real function. As a consequence of Karamata's representation theorem (For instance \citet{pipiras_taqqu_2017}, relation 2.2.3 ), if $\mathcal{L}(x)$ is a slowlying varying function at infinity, 
$$ \lim_{x\to \infty} \mathcal{L}(x)x^{-\zeta} =0\quad \text{for all }\zeta>0\;,$$

or equivalently, for all $\varepsilon >0$ there exists $x_0 \in \mathbb{R}$ such that 
$$ \mathcal{L}(x) \leq \varepsilon x^{\zeta}\quad \text{for }x>x_0\;.$$

It must be noticed that the we cannot yet conclude $\|L(i)\|^2\leq \C i^{\zeta}$ for all $i\geq j$, but only for $i\geq x_0$.  If $x_0 \leq j$, we are done. So, we can assume $j<x_0 $. Note that the function $\mathcal{L}(x) x^{-\zeta}$ is continuous and in $[0,x_0]\supset [j, x_0]$, which is compact in $\RR$. Therefore, there exists $C_1$ such that $\mathcal{L}(x) x^{-\zeta}\leq C_1$, for all $x\in [0,x_0]\;.$ 

Now we possess all the elements to conclude. Consider a fixed $\varepsilon>0$ and corresponding $x_0\;.$ Then, 

\begin{align*}
\mathbb{E} \| \mathbf{R}_{t,s} \|^2 \leq& \C \sum_{i >  j} i^{-2(1-d)} \|L(i)\|^2 \leq \C   \sum_{i >  j} i^{-2(1-d)} \mathcal{L}(i) \\ 
=&\sum_{i>j}^{[x_0]}  i^{-2(1-d)} \mathcal{L}(i) + \sum_{i>[x_0]} i^{-2(1-d)} \mathcal{L}(i) \\
\leq&\sum_{i>j}^{[x_0]}  i^{-2(1-d)} i^{\zeta} i^{-\zeta} \mathcal{L}(i) + C_2\sum_{i>[x_0]} i^{-2(1-d) +\zeta}\;, \quad \text{for any }C_2\geq \varepsilon \\
\leq& C_1 \sum_{i>j}^{[x_0]}  i^{-2(1-d)+\zeta}   + C_2\sum_{i>[x_0]} i^{-2(1-d) +\zeta}  \\
\leq& \max\{C_1, C_2\} \sum_{i>j}  i^{-2(1-d)+\zeta}   = \C \sum_{i>j}  i^{-2(1-d)+\zeta} \;. \\
\end{align*} 
Note that the last $
\C$ does not depend on $j$.  Finally, the last sum is asymptotically equivalent to 
$$\sum_{i >  j} i^{-2(1-d)+\zeta}  \sim \int_j^\infty x^{-2(1-d) +\zeta}dx =\frac{1}{2d-1+\zeta} x^{2d-1+\zeta}\Big|^\infty _j = \C j ^{2d-1+\zeta}\;.$$

In the last step we used the fact that we chose $\zeta \in (0,|2d-1|)$, thus $2d-1+\zeta< 0$. Finally, 
 \begin{align*}
\mathbb{E} \| \mathbf{R}_{t,s} \|^2 \leq \sum_{i >  j} \mathbb{E} \| A_i \mathbf{Z}_{t-i} \|^2 \leq  \mathcal{C}^2   \sum_{i >  j} i^{-2(1-d)+\zeta} \leq \mathcal{C}^2  j^{2d-1+\zeta} \;.
\end{align*} 
\end{proof}
\begin{lemma}
  \label{sigma_algebra}
    Let  $\F_{t-j}=\sigma\left( \mathbf{Z}_{t-j}, \mathbf{Z}_{t-j-1}, \ldots\right)$ so that $\F_{t-(j+1)} \subset \F_{t-j} \subset \cdots \subset \F_{t-1}\subset \F_{t}. $ Consider $\Xb_t=\sum_{j=0}^\infty A_j \mathbf{Z}_{t-j}$, where $A_j \in \RR^{r\times r}$. Then, for all $\mathbf{x}\in \RR^r$ the following hold
    \begin{itemize}
        \item [(i)]  $\bm{1}\left(\{\Xb_t\leq \mathbf{x}\}\right)= \PP( \Xb_t \leq \mathbf{x}| \F_{t}) \;,$
        \item  [(ii)] $ \PP( \Xb_t \leq \mathbf{x}) \overset{a.s}{=}\lim\limits_{j\to \infty }\PP \left( \Xb_t \leq \mathbf{x}| \, \F_{t-j}\right)\;,$
        \item [(iii)] $  \PP \left( \Xb_t \leq \mathbf{x} | \, \F_{t-j}\right) =p_j( \mathbf{x} -  \mathbf{R}_{t,j})\;.$
    \end{itemize}
\end{lemma}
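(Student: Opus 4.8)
The plan is to derive all three identities from standard properties of conditional expectation, organized around the decomposition $\Xb_t = \Xb_{t,j} + \mathbf{R}_{t,j}$ into an innovation-measurable and a ``fresh'' part. For (i), I would first note that $\Xb_t = \sum_{i=0}^\infty A_i \mathbf{Z}_{t-i}$ is a measurable function of $(\mathbf{Z}_t, \mathbf{Z}_{t-1}, \ldots)$ and hence $\F_t$-measurable; therefore the event $\{\Xb_t \leq \mathbf{x}\}$ belongs to $\F_t$ and its indicator is $\F_t$-measurable. Since conditioning a bounded $\F_t$-measurable variable on $\F_t$ returns the variable unchanged, $\PP(\Xb_t \leq \mathbf{x} \mid \F_t) = \EE[\bm{1}(\Xb_t \leq \mathbf{x}) \mid \F_t] = \bm{1}(\Xb_t \leq \mathbf{x})$ almost surely, which is (i).

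For (ii), I would use that $(\F_{t-j})_{j \geq 0}$ is a decreasing sequence of $\sigma$-fields, so $\big(\PP(\Xb_t \leq \mathbf{x} \mid \F_{t-j})\big)_{j \geq 0}$ is a reverse martingale. By Lévy's downward convergence theorem it converges almost surely (and in $L^1$) to $\EE\big[\bm{1}(\Xb_t \leq \mathbf{x}) \,\big|\, \bigcap_{j} \F_{t-j}\big]$. The field $\bigcap_{j} \F_{t-j}$ is the backward tail $\sigma$-field of the i.i.d.\ sequence $(\mathbf{Z}_i)$, which is $\PP$-trivial by Kolmogorov's zero--one law; hence the limit equals the constant $\EE[\bm{1}(\Xb_t \leq \mathbf{x})] = \PP(\Xb_t \leq \mathbf{x})$, giving (ii).

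For (iii), the tool is the freezing (substitution) rule for conditional expectations. I would write $\{\Xb_t \leq \mathbf{x}\} = \{\Xb_{t,j} \leq \mathbf{x} - \mathbf{R}_{t,j}\}$ and observe that $\mathbf{R}_{t,j} = \sum_{i>j} A_i \mathbf{Z}_{t-i}$ is measurable with respect to the conditioning $\sigma$-field, while the truncation $\Xb_{t,j} = \sum_{i=0}^{j} A_i \mathbf{Z}_{t-i}$ depends only on innovations independent of that field. Freezing $\mathbf{R}_{t,j}$ and invoking this independence then yields $\PP(\Xb_t \leq \mathbf{x} \mid \F_{t-j}) = \PP(\Xb_{t,j} \leq \mathbf{x} - \mathbf{r})\big|_{\mathbf{r} = \mathbf{R}_{t,j}} = p_j(\mathbf{x} - \mathbf{R}_{t,j})$, by the very definition $p_j(\cdot) = \PP(\Xb_{t,j} \leq \cdot)$.

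The routine parts are the measurability argument in (i) and the application of the downward theorem together with the zero--one law in (ii). The one step that demands care, and the main (if minor) obstacle, is the index bookkeeping in (iii): one must pin down exactly which innovations generate the conditioning $\sigma$-field so that $\Xb_{t,j}$ is genuinely independent of it while $\mathbf{R}_{t,j}$ is genuinely measurable. This is precisely where the i.i.d.\ and stationarity assumptions on $(\mathbf{Z}_i)$ enter, and it is what aligns the truncation level $j$ with the conditioning level appearing in the statement and in the martingale decomposition \eqref{eq:martingale_decomposion}.
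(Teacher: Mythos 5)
Your proposal is correct and takes essentially the same route as the paper's own proof: (i) via $\F_t$-measurability of $\Xb_t$, (ii) via reverse-martingale convergence along the decreasing filtration $(\F_{t-j})_{j\geq 0}$, and (iii) via the split $\Xb_t=\Xb_{t,j}+\mathbf{R}_{t,j}$ with freezing of the measurable part. Two small remarks: your appeal to Kolmogorov's zero--one law to identify the limit in (ii) as the constant $\PP(\Xb_t\leq\mathbf{x})$ is actually a cleaner justification than the paper's (which identifies the limiting $\sigma$-field as ``$\F_t$'' and invokes part (i)), and the index bookkeeping you rightly flag in (iii) is settled by the convention $\F_s=\sigma(\mathbf{Z}_i: i<s)$ used in the proof of Lemma~\ref{lemma6.4}, rather than the convention displayed in the lemma statement, under which $\mathbf{Z}_{t-j}$ would belong to both $\Xb_{t,j}$ and the conditioning field.
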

\begin{proof} Let $K_{-j}=\EE[ \bm{1}\left(\{\Xb_t\leq \mathbf{x}\}\right)|\F_{t-j}]=\PP( \mathbf{X}_t \leq \mathbf{x}| \F_{t-j})$ for all  $j  \in \NN$. 
    \begin{itemize}
        \item [(i)]  $\mathbf{X}_t$ is $\F_{t}$ measurable, and  the previous theorem yields
$$ \PP( \mathbf{X}_t \leq \mathbf{x}| \F_{t}) = \EE \left[ \bm{1}\left(\{\mathbf{X}_t\leq \mathbf{x}\}\right)|\F_{t} \right] =\bm{1} \left(\{\mathbf{X}_t\leq \mathbf{x}\}\right) =K_0\;.$$
\item [(ii)] It is clear that $K_{-j}$ is $\F_{t-j}$- measurable and integrable. Also, for $j\geq i$ integer,
\begin{align*}
    \EE[K_{-i}|\F_{t-j}]=  \EE[\EE[ \bm{1}\left(\{\Xb_t\leq \mathbf{x}\}\right)|\F_{t-i}]|\F_{t-j}]\underbrace{=}_{\F_{t-j}\subset \F_{t-i}}  \EE[ \bm{1}\left(\{\Xb_t\leq \mathbf{x}\}\right)|\F_{t-j}]=K_{-j}\;.
\end{align*}
Therefore, $(K_{-j})_{j\in \NN}$ is a backwards martingale with respect to $(\F_{t-j})_{j\in \NN}$. Note that we applied the tower property to the conditional expectation. Using the backwards martingale convergence theorem  (for instance \citet{probability_essentials}, p. 233, Theorem 27.4), $K_{-j}:= \PP( \mathbf{X}_t \leq \mathbf{x}| \F_{t-j})$ converges almost surely and in probability to a limit $K_{-\infty}$ that is $\F_{t}= \bigcap_{i=0}^\infty\F_{t-i}$ measurable. By 1), the limit is $\PP( \mathbf{X}_t \leq \mathbf{x}| \F_{t})=\PP( \mathbf{X}_t \leq \mathbf{x})$. 
\item [(iii)] Note that $  \PP( \mathbf{X}_t \leq \mathbf{x}|\F_{t}) =  \PP( \mathbf{X}_{t,j} \leq \mathbf{x}-\mathbf{R}_{t,j} | \F_{t})=p_j(\mathbf{x}-\mathbf{R}_{t,j} ) \;.$
    \end{itemize}
    \end{proof}
\begin{lemma}[ Lemma 6.5 in \citet{Hsing}. ]
    Given a constant $\gamma>1/2$, $l\geq 1$, there exists $\C <\infty$ such that, for all $l \geq 1$, we have 
    $$\sum_{j=1}^{\infty} (j(l+j))^{-\gamma} \leq \begin{cases}
        \C l^{-2\gamma+1}, \quad \gamma \in (1/2, 1) \\
        \C \frac{\log l}{l}, \quad \gamma = 1\\
        \C l^{-\gamma}, \quad \gamma> 1\;.
    \end{cases}$$
    \label{lemma6.5}
\end{lemma}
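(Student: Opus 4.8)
The plan is to split the infinite sum at the index $j=l$ and to estimate the head $\sum_{j=1}^{l}$ and the tail $\sum_{j=l+1}^{\infty}$ separately, exploiting that the factor $l+j$ behaves like $l$ on the head and like $j$ on the tail. The whole argument is an elementary head/tail decomposition combined with integral comparison, and no genuine obstacle arises; the only thing demanding care is tracking which exponent dominates in each of the three ranges of $\gamma$.

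For the head I would use the crude bound $l+j\geq l$ valid for $1\leq j\leq l$, which gives $\sum_{j=1}^{l}(j(l+j))^{-\gamma}\leq l^{-\gamma}\sum_{j=1}^{l}j^{-\gamma}$. The partial sum $\sum_{j=1}^{l}j^{-\gamma}$ is then controlled by comparison with $\int_{1}^{l}x^{-\gamma}\,dx$, which produces exactly the three regimes: it is bounded by a constant (depending only on $\gamma$) when $\gamma>1$, of order $\log l$ when $\gamma=1$, and of order $l^{1-\gamma}$ when $\gamma\in(1/2,1)$. Multiplying through by $l^{-\gamma}$ yields head bounds of order $l^{-\gamma}$, $l^{-1}\log l$, and $l^{1-2\gamma}$, respectively.

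For the tail I would use $l+j\geq j$ for $j\geq l+1$, so that $\sum_{j=l+1}^{\infty}(j(l+j))^{-\gamma}\leq \sum_{j=l+1}^{\infty}j^{-2\gamma}$. Since the hypothesis $\gamma>1/2$ forces $2\gamma>1$, this tail converges and is estimated by $\int_{l}^{\infty}x^{-2\gamma}\,dx=(2\gamma-1)^{-1}l^{1-2\gamma}$; hence the tail is of order $l^{1-2\gamma}$ in all three regimes, again with a constant depending only on $\gamma$.

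Finally I would combine the two pieces regime by regime. For $\gamma\in(1/2,1)$ both contributions are of order $l^{1-2\gamma}$, matching the claim. For $\gamma=1$ the head $l^{-1}\log l$ dominates the tail $l^{-1}$, and for $\gamma>1$ the head $l^{-\gamma}$ dominates the tail $l^{1-2\gamma}$, since $1-2\gamma<-\gamma$ precisely when $\gamma>1$. All the constants produced above depend only on the fixed exponent $\gamma$ and not on $l$, which is what is required. The single point worth flagging is the degenerate value $l=1$ on the $\gamma=1$ line, where $\log l=0$ so the stated bound cannot literally hold; this is harmless for the application, since Lemma~\ref{lemma6.5} is invoked only for growing $l$ and the finitely many small values are absorbed into the generic constant $\C$.
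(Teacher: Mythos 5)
The paper does not prove this lemma at all: it is imported verbatim as Lemma 6.5 of \citet{Hsing}, so there is no internal argument to compare against. Your head/tail decomposition at $j=l$, with $l+j\geq l$ on the head and $l+j\geq j$ on the tail followed by integral comparison, is the standard and correct way to establish it; all three regimes check out, and in particular the comparison $1-2\gamma<-\gamma$ for $\gamma>1$ and the matching orders $l^{1-2\gamma}$ for $\gamma\in(1/2,1)$ are right. The one caveat you flag, that at $l=1$ and $\gamma=1$ the right-hand side $\C\,\log l/l$ vanishes while the left-hand side is positive, is a genuine (if trivial) defect of the statement as written rather than of your argument, and is immaterial for the application in the proof of Theorem \ref{theorem:reduction_principle}, where the lemma is only used to control sums over growing $t$.
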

\begin{lemma}[ Lemma 6.4 in \citet{Hsing}]
For all $\mathbf{x}$, $\mathbf{x'} \in \RR^{r}$, and $t,t',j,j'\geq 1$ such that $t'-j'\neq t-j$, 
\label{lemma6.4}
\begin{itemize}
    \item  [(i)]$\EE[U_{t,j}(\mathbf{x})U_{t',j'}(\mathbf{x})]=\Cov \left( p_{j-1}(  \mathbf{x} - \mathbf{R}_{t,j-1}) - p_{j}( \mathbf{x} - \mathbf{R}_{t, j} ) ,p_{j'-1}( \mathbf{x}  - \mathbf{R}_{t', j'-1}) - p_{j'}( \mathbf{x} - \mathbf{R}_{t', j'} ) \, \right)=0\;.$
    \item  [(ii)] $\Cov \left(p_{j-1}( \mathbf{x} - \mathbf{R}_{t, j-1}) - p_{j}( \mathbf{x} -\mathbf{R}_{t, j} ), \nabla p_{j'-1}( \mathbf{x'}  - \mathbf{R}_{t', j'-1}) \mathbf{Z}_{t'-j'}\right)=0\;.$
\end{itemize}
\begin{proof}
(i) Let $\mathcal{F}_j= \sigma( Z_i \, | \, i<j)$. Notice that from the third point of Lemma \ref{sigma_algebra}, 
$$ \EE[\bm{1} \left( \{\mathbf{X}_t \leq \mathbf{x}\}\right)| \F_{t-j}] = \PP ( \mathbf{X}_t \leq \mathbf{x} | \F_{t-j}) = p_{j}( \mathbf{x} - \mathbf{R}_{t,j})\;.$$
Next, we rename $U_t(\mathbf{x})=\bm{1}\left(\{\mathbf{X}_t \leq \mathbf{x}\}\right) - p(\mathbf{x})$ and let $U_{t,j}(\mathbf{x})$ be the quantity 
\begin{align*}
     U_{t,j}(\mathbf{x})=& p_{j-1}(  \mathbf{x} - \mathbf{R}_{t,j-1}) -p(\mathbf{x}) + p(\mathbf{x})  - p_{j}( \mathbf{x} - \mathbf{R}_{t,j} )\\
     =&\EE[U_t(\mathbf{x})| \F_{t-j+1}] - \EE[U_t(\mathbf{x})| \F_{t-j}]\;.
\end{align*}
Our goal is to prove that $\EE[U_{t,j}(\mathbf{x})U_{t',j'}(\mathbf{x}) ]=0$. Since by assumption $t'-j'\neq t-j$, without loss of generality we can assume $t'-j'< t-j$. Then, $\F_{t'-j'}\subset \F_{t-j}$. Moreover, $\EE[U_t(\mathbf{x})| \F_{t-j}]$ is $\F_{t-j}$ measurable. 
\begin{align*}
  \EE[U_{t,j}(\mathbf{x})U_{t',j'}(\mathbf{x})] =&  \EE\Big[U_{t',j'} (\mathbf{x})\Big(\EE[U_t(\mathbf{x})| \F_{t-j+1}] - \EE[U_t(\mathbf{x})| \F_{t-j}]\Big)  \Big]\quad \text{def of }U_{t,j}\\
  =& \EE\left[\EE\Big[U_{t',j'}(\mathbf{x})\Big(\EE[U_t(\mathbf{x})| \F_{t-j+1}] - \EE[U_t(\mathbf{x})| \F_{t-j}]\Big)  \Big| \F_{t'-j'}\right] \Big] \quad \text{Tower property}\\
   =& \EE\left[U_{t',j'}(\mathbf{x})\EE\Big[ \Big(\EE[U_t(\mathbf{x})| \F_{t-j+1}] - \EE[U_t(\mathbf{x})| \F_{t-j}]\Big)  \Big| \F_{t'-j'}\right] \Big] \quad U_{t'-j'} \text{ is }\F_{t'-j'}\text{-measurable}\\
   =& \EE\left[U_{t',j'}(\mathbf{x})\Big(\EE\Big[ \EE[U_t(\mathbf{x})| \F_{t-j+1}]\Big| \F_{t'-j'}\right] - \EE\left[ \EE[U_t(\mathbf{x})| \F_{t-j}]  \Big| \F_{t'-j'}\right] \Big)\Big] \\
    =&\EE\Big[U_{t',j'}(\mathbf{x})\Big(\EE[U_t(\mathbf{x})| \F_{t'-j'}] - \EE[U_t(\mathbf{x})| \F_{t'-j'}]\Big)  \Big]=0,\quad \quad \F_{t'-j'}\subset \F_{t-j}\subset \F_{t-j+1}\;.\\
\end{align*}
(ii) can be proved similarly since $\nabla p_{j'-1}( \mathbf{x'}  - {\mathbf{R}}_{t', j'-1}) \mathbf{Z}_{t'-j'}$ is $\F_{t'-j'}$ - measurable. 
\end{proof}
\end{lemma}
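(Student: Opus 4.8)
The plan is to recognize both factors as increments of the backwards Doob martingale attached to $\bm{1}(\mathbf{X}_t\le\mathbf{x})$ and to reduce each identity to a single conditioning step. Set $U_t(\mathbf{x}):=\bm{1}(\mathbf{X}_t\le\mathbf{x})-p(\mathbf{x})$. By Lemma~\ref{sigma_algebra}(iii) one has $\EE[U_t(\mathbf{x})\mid\F_{t-j}]=p_j(\mathbf{x}-\mathbf{R}_{t,j})-p(\mathbf{x})$, so the first factor occurring in both (i) and (ii) is
\[
U_{t,j}(\mathbf{x})=p_{j-1}(\mathbf{x}-\mathbf{R}_{t,j-1})-p_{j}(\mathbf{x}-\mathbf{R}_{t,j})=\EE[U_t(\mathbf{x})\mid\F_{t-j+1}]-\EE[U_t(\mathbf{x})\mid\F_{t-j}].
\]
This is $\F_{t-j+1}$-measurable, and the tower property gives $\EE[U_{t,j}(\mathbf{x})\mid\F_{t-j}]=0$; the single index that governs the whole argument is the level $t-j$.

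For (i), since $U_{t,j}(\mathbf{x})$ is centered the covariance equals $\EE[U_{t,j}(\mathbf{x})U_{t',j'}(\mathbf{x})]$. The hypothesis $t-j\ne t'-j'$ allows me to assume without loss of generality that $t'-j'<t-j$, hence $t'-j'+1\le t-j$. Then $U_{t',j'}(\mathbf{x})$, being $\F_{t'-j'+1}$-measurable, is a fortiori $\F_{t-j}$-measurable, so that
\[
\EE[U_{t,j}(\mathbf{x})U_{t',j'}(\mathbf{x})]=\EE\big[U_{t',j'}(\mathbf{x})\,\EE[U_{t,j}(\mathbf{x})\mid\F_{t-j}]\big]=0.
\]

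For (ii) I would run the same argument with second factor $W_{t',j'}:=\nabla p_{j'-1}(\mathbf{x'}-\mathbf{R}_{t',j'-1})\mathbf{Z}_{t'-j'}$, which is measurable at level $t'-j'$. In the ordering $t'-j'<t-j$ the proof of (i) applies verbatim: $W_{t',j'}$ is $\F_{t-j}$-measurable and is pulled out of the inner conditional expectation against $\EE[U_{t,j}(\mathbf{x})\mid\F_{t-j}]=0$. The opposite ordering $t'-j'>t-j$ is the delicate case: now $U_{t,j}(\mathbf{x})$ sits at the lower level, and to kill the product by conditioning on the innovations strictly preceding time $t'-j'$ I need $W_{t',j'}$ to have vanishing conditional mean given that past.

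The main obstacle is precisely this opposite ordering. The required martingale-difference property of the gradient factor is transparent once the residual inside the gradient is $\mathbf{R}_{t',j'}$ rather than $\mathbf{R}_{t',j'-1}$: with $\mathbf{R}_{t',j'}$ the gradient is determined by the innovations preceding $t'-j'$, while $\mathbf{Z}_{t'-j'}$ is independent of them and centered, whence $\EE[\nabla p_{j'-1}(\mathbf{x'}-\mathbf{R}_{t',j'})\mathbf{Z}_{t'-j'}\mid\F_{t'-j'}]=\nabla p_{j'-1}(\mathbf{x'}-\mathbf{R}_{t',j'})\,\EE[\mathbf{Z}_{t'-j'}]=0$. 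This is exactly the form in which the term appears through $K_{t,j}$ in the decomposition \eqref{T}, so no generality is lost for the application. Once the dependence of $\mathbf{R}_{t,j-1}$, $\mathbf{R}_{t,j}$ and the gradient factor on the innovations is tracked carefully — the only genuinely fiddly point, and one where the convention fixing which innovations generate $\F_{t-j}$ must be used consistently — both identities collapse to the single orthogonality step carried out for (i).
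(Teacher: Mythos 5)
Your proof of part (i) is correct and is essentially the paper's argument: both identify $U_{t,j}(\mathbf{x})$ as an increment of the backwards Doob martingale of $\bm{1}(\mathbf{X}_t\leq\mathbf{x})$ and annihilate the product by a single conditioning step. Your bookkeeping is in fact the cleaner one: you condition on $\F_{t-j}$ and use that $U_{t',j'}(\mathbf{x})$ is $\F_{t'-j'+1}$-measurable, hence $\F_{t-j}$-measurable when $t'-j'<t-j$, whereas the paper conditions on $\F_{t'-j'}$ and asserts $\F_{t'-j'}$-measurability of $U_{t',j'}$, which is off by one level (harmlessly, since the tower-property chain still closes because $\F_{t'-j'+1}\subset\F_{t-j}\subset\F_{t-j+1}$).

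Your reservation about part (ii) is well founded and goes beyond what the paper writes. The paper's one-line justification --- that the gradient factor is measurable at level $t'-j'$ --- settles only the ordering $t'-j'<t-j$. In the opposite ordering one needs $\EE\big[\nabla p_{j'-1}(\mathbf{x'}-\mathbf{R}_{t',j'-1})\mathbf{Z}_{t'-j'}\mid\F_{t'-j'}\big]=0$, and this fails in general because $\mathbf{R}_{t',j'-1}=A_{j'}\mathbf{Z}_{t'-j'}+\mathbf{R}_{t',j'}$ makes the gradient factor correlated with $\mathbf{Z}_{t'-j'}$. As you note, the term that actually occurs in the decomposition (in $K_{t,j}$ and in $h_{t,j}A_j\mathbf{Z}_{t-j}$) is $\nabla p_{j-1}(\mathbf{x}-\mathbf{R}_{t,j})A_j\mathbf{Z}_{t-j}$, whose gradient factor is $\F_{t-j}$-measurable and independent of $\mathbf{Z}_{t-j}$, so the version with $\mathbf{R}_{t',j'}$ inside the gradient is both true in either ordering and sufficient for every application in the paper; the $\mathbf{R}_{t',j'-1}$ in the displayed statement of (ii) should be read as $\mathbf{R}_{t',j'}$.
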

\begin{lemma} \label{upperbound_rest}
Suppose the assumptions of Theorem \ref{theorem:reduction_principle} are satisfied with integer $r.$ 
Let $K_{t,j}$ be defined as $K_{t,j}(\mathbf{x}) :=  p_{j-1}(\mathbf{x} - \mathbf{R}_{t,j-1}) - p_{j}(\mathbf{x} - \mathbf{R}_{t,j}) + \bm{1}_{\{j\geq r\}} \nabla p_{j-1}(\mathbf{x} - \mathbf{R}_{t,j})   A_j \mathbf{Z}_{t-j}\;.$
Then, there exist two positive a constants $\C_1$ and $\C_2$ such that, for any $j \geq r $ and any $t \in \mathbb{N} $, 
\begin{align*}
| K_{t,j}(\mathbf{x}) | \leq   \C_1\| A_j \|^2 \left( \| \mathbf{Z}_{t-j} \|^2 +\C_2 \right)\;.
\end{align*}
\end{lemma}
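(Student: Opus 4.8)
The plan is to exhibit $K_{t,j}(\mathbf{x})$ as a difference of two second-order Taylor remainders of the $C^2$ function $p_{j-1}$, arranged so that the zeroth- and first-order contributions cancel exactly; the claimed bound then drops out from the boundedness of the Hessian of $p_{j-1}$ together with the finite second moment of the innovations.

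First I would fix the realizations of $\mathbf{R}_{t,j}$ and $\mathbf{Z}_{t-j}$, treat them as deterministic, and abbreviate $\mathbf{y} := \mathbf{x} - \mathbf{R}_{t,j}$ and $\mathbf{w} := A_j \mathbf{Z}_{t-j}$. Since $j \geq r$, the indicator $\bm{1}_{\{j\geq r\}}$ equals one. Two identities reduce everything to $p_{j-1}$: the recursion \eqref{eq:recursive_G}, which gives $p_j(\mathbf{y}) = \int p_{j-1}(\mathbf{y} - A_j \mathbf{t})\, d\mathcal{G}(\mathbf{t})$, and the telescoping identity $\mathbf{R}_{t,j-1} = \mathbf{R}_{t,j} + A_j \mathbf{Z}_{t-j}$, which rewrites the first term as $p_{j-1}(\mathbf{y} - \mathbf{w})$. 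Hence
\[
K_{t,j}(\mathbf{x}) = p_{j-1}(\mathbf{y} - \mathbf{w}) - \int p_{j-1}(\mathbf{y} - A_j \mathbf{t})\, d\mathcal{G}(\mathbf{t}) + \nabla p_{j-1}(\mathbf{y})\, \mathbf{w}.
\]
Because $j \geq r$, the index $j-1$ lies in the range where condition \eqref{eq:LIP-condition1} applies (cf. Proposition \ref{prop:Lipschitzness_gradient}; the truncation $\mathbf{X}_{t,r-1}=B\mathbf{Z}_{t,r}$ of \eqref{expression_density_of_X_MAIN} already carries a bounded $C^1$ density when $f,f'$ are bounded), so $p_{j-1}$ is $C^2$ with first and second derivatives bounded by a constant $\mathcal{C}$ uniformly in the base point.

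Next I would Taylor-expand $p_{j-1}$ to second order around $\mathbf{y}$ in the first term and in the integrand. Writing $p_{j-1}(\mathbf{y} - \mathbf{w}) = p_{j-1}(\mathbf{y}) - \nabla p_{j-1}(\mathbf{y})\mathbf{w} + \tfrac12 \mathbf{w}^\top \nabla^2 p_{j-1}(\zeta_1)\mathbf{w}$ and, inside the integral, $p_{j-1}(\mathbf{y} - A_j\mathbf{t}) = p_{j-1}(\mathbf{y}) - \nabla p_{j-1}(\mathbf{y}) A_j \mathbf{t} + \tfrac12 (A_j\mathbf{t})^\top \nabla^2 p_{j-1}(\zeta_2(\mathbf{t}))(A_j\mathbf{t})$, the constant terms $p_{j-1}(\mathbf{y})$ cancel between the two expansions, the first-order term $-\nabla p_{j-1}(\mathbf{y})\mathbf{w}$ is cancelled by the explicit correction $+\nabla p_{j-1}(\mathbf{y})\mathbf{w}$, and the linear term of the integral vanishes because $\int \mathbf{t}\, d\mathcal{G}(\mathbf{t}) = \mathbb{E}[\mathbf{Z}_1] = \mathbf{0}$. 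What remains is purely the second-order remainder,
\[
K_{t,j}(\mathbf{x}) = \tfrac12 \mathbf{w}^\top \nabla^2 p_{j-1}(\zeta_1)\mathbf{w} - \tfrac12 \int (A_j\mathbf{t})^\top \nabla^2 p_{j-1}(\zeta_2(\mathbf{t}))(A_j\mathbf{t})\, d\mathcal{G}(\mathbf{t}).
\]

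Finally I would bound the two remainders. Using $|\mathbf{v}^\top \nabla^2 p_{j-1}(\cdot)\mathbf{v}| \leq \big(\sum_{i,k}|\partial^2_{i,k} p_{j-1}|\big)\|\mathbf{v}\|^2 \leq \mathcal{C}\|\mathbf{v}\|^2$ together with $\|A_j \mathbf{v}\| \leq \|A_j\|\,\|\mathbf{v}\|$, the first term is at most $\mathcal{C}\|A_j\|^2\|\mathbf{Z}_{t-j}\|^2$, while the second is at most $\mathcal{C}\|A_j\|^2 \int \|\mathbf{t}\|^2\, d\mathcal{G}(\mathbf{t}) = \mathcal{C}\|A_j\|^2\, \mathbb{E}[\|\mathbf{Z}_1\|^2]$, which is finite by the moment assumption $\mathbb{E}[\|\mathbf{Z}_1\|^4] < \infty$. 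Combining yields $|K_{t,j}(\mathbf{x})| \leq \mathcal{C}_1 \|A_j\|^2\big(\|\mathbf{Z}_{t-j}\|^2 + \mathcal{C}_2\big)$ with $\mathcal{C}_2 = \mathbb{E}[\|\mathbf{Z}_1\|^2]$, uniformly in $t$ and $\mathbf{x}$. The only genuinely delicate point is the exact cancellation of the zeroth- and first-order terms, which rests on recognizing the explicit correction $\nabla p_{j-1}(\mathbf{y})A_j\mathbf{Z}_{t-j}$ as precisely the first-order Taylor coefficient; once the uniform boundedness of $\nabla^2 p_{j-1}$ is secured, the rest is routine.
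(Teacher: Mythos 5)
Your proof is correct and follows essentially the same route as the paper: rewrite $p_j(\mathbf{x}-\mathbf{R}_{t,j})$ via the recursion \eqref{eq:recursive_G}, use $\mathbf{R}_{t,j-1}=\mathbf{R}_{t,j}+A_j\mathbf{Z}_{t-j}$, Taylor-expand $p_{j-1}$ to second order so that the zeroth- and first-order terms cancel (the latter via $\mathbb{E}[\mathbf{Z}_1]=\mathbf{0}$), and bound the Hessian remainders using \eqref{eq:LIP-condition1} and the finite second moment of the innovations. The only difference is organizational: by centring both expansions at $\mathbf{x}-\mathbf{R}_{t,j}$ you make the gradient correction cancel immediately, whereas the paper expands at $\mathbf{x}-\mathbf{R}_{t,j-1}$ and therefore needs an extra mean-value step (its function $\phi(\mu)$) to shift the gradient to the correct base point.
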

\begin{proof}[Proof of Lemma \ref{upperbound_rest}]
   Suppose that relationship \eqref{eq:leading_term} below holds for all \( j \geq r \), where 
\(\delta_j(\mathbf{t}) = \lambda^\ast A_j (\mathbf{Z}_{t-j} - \mathbf{t})\), with \( \lambda^\ast \in (0,1) \), and \(\eta_j = \mu^\ast A_j \mathbf{Z}_{t-j}\), with \( \mu^\ast \in (0,1) \):
\begin{align}
 \label{eq:leading_term}
     K_{t,j}(\mathbf{x})=
     \left\langle \nabla^2 p_{j-1}( \mathbf{x} - \mathbf{R}_{t,j-1} + \eta_{t,j} )  A_j \mathbf{Z}_{t-j},A_j \mathbf{Z}_{t-j} \right\rangle - \frac{1}{2}   \int \left\langle \nabla^2 p_{j-1} \Big(  \mathbf{x}  -\mathbf{R}_{t,j-1} + \delta_{t,j}(\mathbf{t}) \Big) A_j( \mathbf{Z}_{t-j} - \mathbf{t} ), A_j( \mathbf{Z}_{t-j} - \mathbf{t} ) \right\rangle \, d\mathcal{G}(\mathbf{t})\;. 
\end{align}
    Then, it follows 
\begin{align*}
| K_{t,j}(\mathbf{x}) | 
\leq & \Bigl \|  \nabla^2 p_{j-1}(\mathbf{x}- \mathbf{R}_{t,j-1} + \eta_j ) \Bigr \| \cdot \Bigl \| A_j \mathbf{Z}_{t-j} \Bigr \|^2  +   \int  \Bigl \|  \nabla^2 p_{j-1}(\mathbf{x} - \mathbf{R}_{t,j-1} + \delta_{j}(\mathbf{t}))  \Bigr\| \cdot \Bigl\| A_j( \mathbf{Z}_{t-j} - \mathbf{t} ) \Bigr\|^2 \, d\mathcal{G}(\mathbf{t})\\ \leq & \C_1' \|A_j\|^2 \|\mathbf{Z}_{t-j}\|^2 + \C_2'\|A_j\|^2 \left( \|\mathbf{Z}_{t-j}\|^2 +\int \|\mathbf{t}\|^2 \, d\mathcal{G}(\mathbf{t}) \right) \leq \C_1\| A_j \|^2 \left( \| \mathbf{Z}_{t-j} \|^2 +\C_2 \right)\;.
\end{align*}
We used fact that the partial derivatives of $p_j$ are uniformly bounded for every $j$ by assumption \eqref{eq:LIP-condition1} and the assumed finite second moment of $\mathbf{Z}_1$. It remains to prove \eqref{eq:leading_term}.
By \eqref{eq:recursive_G}, we obtain 
\begin{equation} 
\label{p_j in terms of p_{j-1}}
    p_s(\mathbf{x}) = \mathbb{P}(\mathbf{X}_{t,s} \leq \mathbf{x} )  =  \mathbb{P}(\mathbf{X}_{t,s-1} + A_s \mathbf{Z}_{t-s} \leq \mathbf{x} )  =\int p_{s-1} (\mathbf{x}- A_s\mathbf{t} ) \, d\mathcal{G}(\mathbf{t}) \;.
\end{equation}
Via Equation (\ref{p_j in terms of p_{j-1}}), we can write
\begin{align*}
& p_{j-1}(\mathbf{x} -\mathbf{R}_{t,j-1}) - p_{j}(\mathbf{x} -\mathbf{R}_{t,j}) \\
= & \int \Big( p_{j-1}(\mathbf{x} -\mathbf{R}_{t,j-1}) -  p_{j-1} (\mathbf{x} -\mathbf{R}_{t,j}- A_j \mathbf{t} ) \Big) \, d\mathcal{G}(\mathbf{t}) \\
= & \int \Big( p_{j-1}(\mathbf{x} -\mathbf{R}_{t,j-1}) -  p_{j-1} (\mathbf{x} -\mathbf{R}_{t,j}-A_j\mathbf{Z}_{t-j}+A_j\mathbf{Z}_{t-j}  - A_j \mathbf{t} ) \Big) \, d\mathcal{G}(\mathbf{t}) \\
 =& \int \Big( p_{j-1}(\mathbf{x} -\mathbf{R}_{t,j-1}) -  p_{j-1} (\mathbf{x} -\mathbf{R}_{t,j-1}+A_j(\mathbf{Z}_{t-j}- \mathbf{t}) ) \Big) \, d\mathcal{G}(\mathbf{t})\;. \\
\end{align*}
Now, we consider the real valued function 
\begin{align*}
g(\lambda) : =   \int \Big( p_{j-1}( \mathbf{x} -\mathbf{R}_{t,j-1}) -  p_{j-1} ( \mathbf{x}  -\mathbf{R}_{t,j-1} + \lambda A_j( \mathbf{Z}_{t-j} - \mathbf{t} )) \Big) \, d\mathcal{G}(\mathbf{t}) \;.
\end{align*}
Next, we compute the derivatives of $g$. By assumption, the function $\mathbf{t}\mapsto  p_{j-1} (\mathbf{x} -\mathbf{R}_{t,j}- A_j \mathbf{t} )$ and its first derivatives are integrable with respect to the measure $\mathcal{G}(\mathbf{t})$ by assumption for all $j\geq r$. Therefore, we can use the dominated convergence theorem, and by taking derivatives  with respect to $\lambda$, we obtain
\begin{align*}
g'(\lambda) & = -   \int  \nabla p_{j-1} \Big(  \mathbf{x}  -\mathbf{R}_{t,j-1}+ \lambda A_j( \mathbf{Z}_{t-j} - \mathbf{t} )\Big) A_j( \mathbf{Z}_{t-j} - \mathbf{t} ) \, d\mathcal{G}(\mathbf{t})\;, \\
g''(\lambda) & = - \frac{1}{2}   \int  \left\langle\nabla^2 p_{j-1} \Big(  \mathbf{x}  -\mathbf{R}_{t,j-1}+ \lambda A_j( \mathbf{Z}_{t-j} - \mathbf{t} )\Big) A_j( \mathbf{Z}_{t-j} - \mathbf{t} ) ,A_j( \mathbf{Z}_{t-j} - \mathbf{t} ) \right\rangle \, d\mathcal{G}(\mathbf{t}) \;.
\end{align*}
Moreover, since $g \in C^2([0,1])$, the mean value theorem yields, $g(1) = g(0) + g'(0) + \frac{1}{2} g''(\lambda^\ast)$ for some value $\lambda^\ast \in (0,1)$ yields
\begin{align*}
&   p_{j-1}( \mathbf{x} -\mathbf{R}_{t,j-1}) - p_{j}( \mathbf{x} -\mathbf{R}_{t,j}) \\
 = & 0 -    \int  \nabla p_{j-1} \Big(  \mathbf{x}  -\mathbf{R}_{t,j-1} \Big) A_j( \mathbf{Z}_{t-j} - \mathbf{t} ) \, d\mathcal{G}(\mathbf{t}) \\
  - & \frac{1}{2}    \int \left \langle  \nabla^2 p_{j-1} \Big(  \mathbf{x}  -\mathbf{R}_{t,j-1}+ \lambda^\ast A_j( \mathbf{Z}_{t-j} - \mathbf{t} )\Big) A_j( \mathbf{Z}_{t-j} - \mathbf{t} ),A_j( \mathbf{Z}_{t-j} - \mathbf{t} ) \right \rangle \, d\mathcal{G}(\mathbf{t}) \\
  = & -    \nabla p_{j-1} \Big(  \mathbf{x}  -\mathbf{R}_{t,j-1} \Big) A_j \mathbf{Z}_{t-j} \\
  - &  \frac{1}{2}    \int  \left\langle \nabla^2 p_{j-1} \Big(  \mathbf{x}  -\mathbf{R}_{t,j-1}+ \lambda^\ast A_j( \mathbf{Z}_{t-j} - \mathbf{t} )\Big) A_j( \mathbf{Z}_{t-j} - \mathbf{t} ),A_j( \mathbf{Z}_{t-j} - \mathbf{t} ) \right\rangle \, d\mathcal{G}(\mathbf{t}) \;,
\end{align*}
using $\mathbb{E}[\mathbf{Z}_{t-j}]= 0$ for the second equality. However, the first order term of the approximation is $ -    \nabla p_{j-1} \Big(  \mathbf{x}  -\mathbf{R}_{t,j-1} \Big) A_j \mathbf{Z}_{t-j} $, whereas we want to obtain $ -    \nabla p_{j-1} \Big(  \mathbf{x}  -\mathbf{R}_{t,j} \Big) A_j \mathbf{Z}_{t-j} $.  This latter term shall appear in another Taylor decomposition. In other words, we consider
\[ \phi(\mu) :=    \nabla p_{j-1}( \mathbf{x} -\mathbf{R}_{t,j-1}) - \nabla p_{j-1}( \mathbf{x} -\mathbf{R}_{t,j-1} + \mu A_j \mathbf{Z}_{t-j} )\;.\]

There exists $\mu^\ast$ such that $\phi(1) = \phi(0) + \phi'(\mu^\ast)$ 
\[    \Big[ \nabla p_{j-1}( \mathbf{x} -\mathbf{R}_{t,j-1}) - \nabla p_{j-1}( \mathbf{x} -\mathbf{R}_{t,j})\Big]A_j \mathbf{Z}_{t-j} = -   \left\langle\nabla^2 p_{j-1}( \mathbf{x} - \mathbf{R}_{t,j-1} + \mu^\ast A_j \mathbf{Z}_{t-j}  )  A_j \mathbf{Z}_{t-j} , A_j \mathbf{Z}_{t-j} \right\rangle \;.\]
Re-arranging this last expression yields
\begin{align*}
  & -   \nabla p_{j-1}( \mathbf{x} -\mathbf{R}_{t,j-1})A_j \mathbf{Z}_{t-j} =\\ 
  &-   \nabla p_{j-1}( \mathbf{x} -\mathbf{R}_{t,j})A_j \mathbf{Z}_{t-j} +  \left\langle\nabla^2 p_{j-1}( \mathbf{x} - \mathbf{R}_{t,j-1} + \mu^\ast A_j \mathbf{Z}_{t-j}  )  A_j \mathbf{Z}_{t-j} , A_j \mathbf{Z}_{t-j} \right\rangle\;. \\
\end{align*}
Now we can simplify the notation by writing 
\begin{align*}
\begin{cases} 
 \eta_{t,j} = \mu^\ast A_j \mathbf{Z}_{t-j}\;, \\
\delta_{t, j}(\mathbf{t}) = \lambda^\ast A_j( \mathbf{Z}_{t-j} - \mathbf{t} )\;,
\end{cases}
\end{align*}
thus 
\begin{align*}
&   K_{t,j}( \mathbf{x})=  p_{j-1}( \mathbf{x} -\mathbf{R}_{t,j-1}) - p_{j}( \mathbf{x} -\mathbf{R}_{t,j}) +    \nabla p_{j-1} \Big(  \mathbf{x}  -\mathbf{R}_{t,j} \Big) A_j \mathbf{Z}_{t-j} \\
&=   \left[\nabla^2 p_{j-1}( \mathbf{x} - \mathbf{R}_{t,j-1} + \eta_{t,j} )  (A_j \mathbf{Z}_{t-j})^2  - \frac{1}{2}   \int \left\langle  \nabla^2 p_{j-1} \Big(  \mathbf{x}  -\mathbf{R}_{t,j-1} + \delta_{t,j}(\mathbf{t}) \Big) A_j( \mathbf{Z}_{t-j} - \mathbf{t} ) , A_j( \mathbf{Z}_{t-j} - \mathbf{t} ) \right\rangle \, d\mathcal{G}(\mathbf{t}) \right] \;.
\end{align*}
\end{proof}
    
\section{Proofs for Turning rate}
\label{appendix:turning_rate}
In order to prove Theorem \ref{theorem:cusum_statistics_limit} we will need the two following auxiliary results. 
\begin{corollary}
\label{asymptotic_turning_rate}
   Let $\xi_0, \ldots, \xi_{n+1}$ be time series data whose increments $X_1,\ldots, X_{n+1}$ satisfy the linear representation $X_t=\sum_{j=0}^\infty a_j Z_{t-j}$ with $\sum_{j=0}^\infty |a_j|< \infty$,  with  $Z_1$ admitting a continuous and bounded density and finite second moment $\EE \left[|Z_1|^2\right]<\infty$.   Then, for the turning rate estimator \eqref{eq:turning_rate_1} the following limit holds as $n\to \infty$

$$ \frac{1}{\sqrt{n}}\sum_{t=1}^{\lfloor n \tau \rfloor} \sum_{\gamma \in \mathcal{T}} \left( \bm{1}(\Pi(\xi_{t-1}, \xi_{t}, \xi_{t+1})=\gamma)-p(\gamma) \right)\overset{\D[0,1]}{\Longrightarrow} \, \sigma B(\tau)\;, \text{ for } \tau \in [0,1]\;,$$
   with variance 
   $$ \sigma ^2 =  \Var \left( \sum_{\gamma \in \mathcal{T}} \bm{1}\left( V_{\gamma   }\Xb_1 \leq \mathbf{0} \right) \right) + 2 \sum_{j=1}^\infty \Cov \left( \sum_{\gamma \in \mathcal{T}} \bm{1}\left( V_{\gamma   }\Xb_1 \leq \mathbf{0} \right), \sum_{\gamma \in \mathcal{T}} \bm{1}\left( V_{\gamma   }\Xb_{1+j} \leq \mathbf{0} \right) \right)\;,$$
where the matrices $V_\gamma$ are defined in \eqref{eq:the_four_matrices}. 
For $\tau=1$, 
$$\sqrt{n}\Big(\hat{q}_n - q\Big)\xrightarrow{\D}\N(0,\sigma^2)\;.$$ 
\end{corollary}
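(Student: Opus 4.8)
The plan is to reduce the statement to the framework already built for Theorem \ref{theorem:SRD_multivariate}, working directly with the single bounded functional
$$
H(\mathbf{X}_t):=\sum_{\gamma\in\mathcal{T}}\bm{1}\big(V_\gamma\mathbf{X}_t\leq\mathbf{0}\big),\qquad h(\mathbf{X}_t)=H(\mathbf{X}_t)-q,
$$
rather than treating the four turning patterns separately. First I would record that here $r=2$, so by Lemma \ref{lemma:vectorization} the vector $\mathbf{X}_t=(X_t,X_{t+1})^\top$ is a multivariate linear process $\mathbf{X}_t=\sum_{j\geq0}B_j\mathbf{Z}_{t-j}$ with $\sum_j\|B_j\|\leq 2\sum_j|a_j|<\infty$, so the vectorized process is short-range dependent; and exactly as in the proof of Theorem \ref{theorem:convergence_OP_SRD}, Assumption \ref{assump.1} holds for $\mathbf{X}_t$ (verification \eqref{expression_density_of_X_MAIN}) and is inherited by every transformed process $(V_\gamma\mathbf{X}_t)_t$ because each $V_\gamma$ in \eqref{eq:the_four_matrices} is invertible (Lemma \ref{lemma:matrix_form_OP}).

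Next I would re-run the martingale-decomposition argument of Theorem \ref{theorem:SRD_multivariate}, now applied to $H$. With $\mathcal{F}_t=\sigma(\mathbf{Z}_t,\mathbf{Z}_{t-1},\ldots)$, the decomposition \eqref{eq:martingale_decomposion} is linear in the summand, so
$$
h(\mathbf{X}_t)=\sum_{j=0}^\infty U_{t,j},\qquad U_{t,j}=\sum_{\gamma\in\mathcal{T}}U_{t,j}^{(\gamma)},
$$
where $U_{t,j}^{(\gamma)}$ is the martingale difference attached to $\bm{1}(V_\gamma\mathbf{X}_t\leq\mathbf{0})$. The proof of Theorem \ref{theorem:SRD_multivariate} uses only two properties of the summand: boundedness, and the second-moment bound of Lemma \ref{lemma:lemma3.4Furmanczyk}. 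Both survive the finite sum: since $|\mathcal{T}|=4$ we have $|U_{t,j}|\leq 4$, and by the triangle inequality together with Lemma \ref{lemma:lemma3.4Furmanczyk} applied to each $(V_\gamma\mathbf{X}_t)_t$ (whose coefficient matrices are $V_\gamma B_j$), I would obtain $\EE[U_{t,j}^2]\leq|\mathcal{T}|\sum_{\gamma}\EE[(U_{t,j}^{(\gamma)})^2]\leq\C\|B_{j+1}\|^2$, the constant absorbing $|\mathcal{T}|$ and the fixed norms $\|V_\gamma\|$.

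With these two bounds, I would verify the three conditions of Billingsley's Theorem \ref{theorem:theorem4.2Billingsley} verbatim as in the proof of Theorem \ref{theorem:SRD_multivariate}: setting $M_t(u)=\sum_{j=0}^{u-1}U_{t+j,j}$ yields a stationary ergodic martingale-difference sequence whose partial sums satisfy the martingale functional CLT (condition (i)); the variances $\sigma_u^2=\Var(M_1(u))$ are bounded by $\C\big(\sum_j\|B_{j+1}\|\big)^2$ and converge to a finite $\sigma^2$ (condition (ii)); and the tail estimate $\PP(|V_{u,n}-W_n|\geq\varepsilon)\leq\C\varepsilon^{-2}\big(\sum_{j>u}\|B_j\|\big)^2$ gives condition (iii) by short-range summability. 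The same covariance bookkeeping (Lemma \ref{lemma6.4} kills the cross terms with $t-j\neq t'-j'$, and stationarity collapses the rest) identifies $\sigma^2=\sum_{s\in\mathbb{Z}}\Cov(H(\mathbf{X}_0),H(\mathbf{X}_s))$, which by stationarity equals the stated $\Var(H(\mathbf{X}_1))+2\sum_{j\geq1}\Cov(H(\mathbf{X}_1),H(\mathbf{X}_{1+j}))$; the case $\tau=1$ is then the marginal CLT.

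I do not anticipate a genuine obstacle, since the result is Theorem \ref{theorem:SRD_multivariate} for a finite sum of ordinal-pattern indicators; the single point requiring care is that $H$ involves several distinct matrices $V_\gamma$ rather than one threshold, so the second-moment bound must be assembled from the four individual bounds. This is precisely why I would work with the \emph{joint} martingale decomposition of $H$: summing four separate marginal limit theorems would instead force a Cramér--Wold/joint-covariance argument, whereas the joint decomposition delivers a single functional CLT and the long-run variance directly.
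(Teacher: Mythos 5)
Your proposal is correct and follows essentially the same route as the paper: both form the joint martingale decomposition of $h(\mathbf{X}_t)=\sum_{\gamma\in\mathcal{T}}\bm{1}(V_\gamma\mathbf{X}_t\leq\mathbf{0})-q$, transfer the bounds of Lemma \ref{lemma:lemma3.4Furmanczyk} and the orthogonality of Lemma \ref{lemma6.4} to the summed martingale differences via the triangle inequality over the four invertible matrices $V_\gamma$, and then rerun the Billingsley Theorem 4.2 argument of Theorem \ref{theorem:SRD_multivariate} verbatim. The only cosmetic difference is that you bound $\EE[U_{t,j}^2]$ by $|\mathcal{T}|\sum_\gamma\EE[(U_{t,j}^{(\gamma)})^2]$ while the paper bounds $|\tilde U_{t,j}|$ pointwise before squaring; both yield the same $\C\|B_{j+1}\|^2$ estimate.
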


\begin{proof}[Proof of Corollary \ref{asymptotic_turning_rate}]
The proof is an extension of the argument used in proof of Theorem \ref{theorem:convergence_OP_SRD}. Let $\mathbf{X}_{t+1}=(X_{t+1}, X_{t+2})^\top$. It holds: 
\begin{align}
     \hat{q}_n -q = \frac{1}{n}\sum_{t=0}^{n-1} h( \mathbf{X}_t)\;, \quad h(\mathbf{X}_t):=   \sum_{\gamma \in \mathcal{T}} \bm{1}\left( V_{\gamma   }\Xb_t \leq \mathbf{0} \right)-q\;.
     \label{eq:h}
\end{align}
The martingale decomposition \eqref{eq:martingale_decomposion} still holds with 
    \begin{align}
 h(\mathbf{X}_t)=\sum_{\gamma \in \mathcal{T}} \bm{1}\left( V_{\gamma   }\Xb_t \leq \mathbf{0} \right) - q =  \sum_{j=0}^\infty \sum_{\gamma \in \mathcal{T}} \tilde{U}_{t,j, \gamma}  = \sum_{j=0}^\infty \tilde{U}_{t,j}\;, 
  \end{align}
  where
  \begin{align}
\tilde{U}_{t,j, \gamma}:&= \mathbb{E}\big[\bm{1}( V_{\gamma }\Xb_t\leq \mathbf{0}) \big| \mathcal{F}_{t-j}\big] - \mathbb{E}\big[\bm{1}(V_{\gamma } \Xb_t\leq \mathbf{0}) \big| \mathcal{F}_{t-j-1}\big]=\tilde{p}_{j}(- \mathbf{R}_{t,j}) - \tilde{p}_{j+1}( -\mathbf{R}_{t,j+1}) \nonumber\;,\\
\end{align}
where we set $\tilde{p}_j (\mathbf{x}):= p_j(V_{\gamma }\mathbf{x})\;.$ From Lipschitzness of $p_j$ (Lemma \ref{lem.Lip}), it follows that $\tilde{p}_j$ is Lipschitz as well, with Lipschitz constant $\text{Lip}(\tilde{p}_j)\leq \text{Lip}(p_j)\|V_\gamma\|\;.$ Therefore, the same conclusion of Lemma \ref{lemma:lemma3.4Furmanczyk} holds for $\tilde{U}_{t,j}$: for $j\geq J $
\begin{align*}
    |\tilde{U}_{t,j}|\leq&   \sum_{\gamma \in \mathcal{T}} |\tilde{U}_{t,j, \gamma} | \leq  \sum_{\gamma \in \mathcal{T}} \text{Lip}(p_j)\|V_{\gamma }\|\, \|A_{j+1}\| \, \left( \|\mathbf{Z}_{t-j-1}\| + \sqrt{\EE [ \|\mathbf{Z}_{t-j-1}\|^2]} \right)\quad \text{see Lemma \ref{lemma:lemma3.4Furmanczyk}}\\
    \leq & \C \left( \sum_{\gamma \in \mathcal{T}} \| V_\gamma\|\right) \text{Lip}(p_J)  \|A_{j+1}\| \, \left( \|\mathbf{Z}_{t-j-1}\| + \sqrt{\EE [ \|\mathbf{Z}_{t-j-1}\|^2]} \right)\;.
\end{align*}
Lastly, Lemma \ref{lemma6.4} applies to $\tilde{U}_{t,j}$, i.e $\EE[ \tilde{U}_{t,j} U_{t',j'}]=0$ if $t-j \neq t'-j'\;.$ In fact, 
\begin{align*}
    \tilde{U}_{t,j}=\EE [ h(\mathbf{X}_t)\,|\, \F_{t-j}] - \EE [ h(\mathbf{X}_t)\,|\, \F_{t-j-1}]\;,
\end{align*}
and the conclusion of Lemma \ref{lemma6.4} follows with $h(\mathbf{X}_t)$ replacing $U_t(\mathbf{x})\;.$
Finally, since the same conclusions of Lemma \ref{lemma6.4} and Lemma \ref{lemma:lemma3.4Furmanczyk} hold for $\tilde{U}_{t,j}$, we can redo exactly the same steps of the proof of Theorem \ref{theorem:SRD_multivariate} (i.e the application of Theorem 4.2 of Billingsley) to 
\begin{align*}
    \tilde{W}_n:&=\frac{1}{\sqrt{n}}\sum_{t=1}^{\lfloor n \tau \rfloor} \sum_{j=0}^\infty \tilde{U}_{t,j}\;, \quad \tilde{V}_{u,n}:=\frac{1}{\sqrt{n}} \sum_{t=1}^{\lfloor n \tau \rfloor} \sum_{j=0}^{u-1} \tilde{U}_{t,j}\;, \\
    \tilde{H}_{u,n}:&=\frac{1}{\sqrt{n}}\sum_{j=0}^{u-1} \left( \sum_{t=1}^j \tilde{U}_{t,j} - \sum_{t=\lfloor n \tau \rfloor+1}^{\lfloor n \tau \rfloor +j} \tilde{U}_{t,j}\right)\;.
\end{align*}
\end{proof}

\begin{theorem}
\label{theorem:asymptotic_turning_rates_series}
 $\xi_0, \ldots, \xi_{n+1}$  be a time series. Consider the turning rate series generated by blocks of size \( m+2 \) and \( n_b = \lfloor n/m \rfloor\) blocks, denoted by $\hat{q}_{1,m}, \ldots, \hat{q}_{n_b,m}$. Suppose $\frac{m}{\sqrt{n}}\longrightarrow \infty\;.$
 \begin{itemize}
     \item [(i)]
  If  $X_t = \sum_{j=0}^\infty a_j Z_{t-j}$ satisfies the assumptions on the increments of Theorem \ref{theorem:convergence_OP_SRD}, then
\begin{equation}
    \frac{m}{\sqrt{n}}\sum\limits_{j=1}^{\lfloor n_b \tau \rfloor} (\hat{q}_{j,m} -q) \overset{\D[0,1]}{\Longrightarrow} \sigma B(\tau) \text{ for }\tau \in [0,1] \quad \text{ as } n_b\to \infty \quad  (\text{or }n\to \infty)\;.
     \label{averages_turning_rate_SRD}
\end{equation}
 where $\sigma^2=\EE[h(\Xb_1)^2]+2\sum_{j=1}^\infty \EE[ h(\Xb_1) h(\Xb_{1+j})]\;,$ for $h$ and $\mathbf{X}_j$ as \eqref{eq:h}.
   \item [(ii)] If  $X_t = \sum_{j=0}^\infty a_j Z_{t-j}$ satisfies the assumptions on the increments of Theorem \ref{theorem:convergence_OP_LRD}, then
\begin{equation}
     \frac{m}{n^{1/2+d}}\sum\limits_{j=1}^{n_b} (\hat{q}_{j,m} -q) \overset{\D}{\longrightarrow} \N(0,\sigma^2)\quad \text{ as } n_b\to \infty \quad  (\text{or }n\to \infty)\;.
     \label{averages_turning_rate_LRD}
\end{equation}
where $ \sigma^2= \sum_{\gamma_1, \gamma_2 \in \mathcal{T}} (\nabla p_{\gamma_2}(\textbf{0}))^\top V_{\gamma_1}V V_{\gamma_2}^\top \nabla p_{\gamma_1}(\textbf{0}) \;,$ with $V=  \frac{\Gamma(d)^2 }{\Gamma(2d+2) \cos \left(\pi d \right) }  \mathbf{E}$ and $V_\gamma$ the matrices defined in \eqref{eq:the_four_matrices}.
 \end{itemize}
\end{theorem}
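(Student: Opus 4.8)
The plan is to exploit that the prefactor $m$ cancels the $1/m$ in each block average, reducing the scaled partial sum of the turning rate series to an ordinary partial sum of the stationary, bounded sequence $h(\mathbf{X}_t)=\sum_{\gamma\in\mathcal{T}}\bm{1}(V_\gamma\mathbf{X}_t\le\mathbf{0})-q$, to which the limit theorems of this article apply. Writing $\mathbf{X}_t=(X_t,X_{t+1})^\top$ and translating $\Pi(\xi_s,\xi_{s+1},\xi_{s+2})=\gamma$ into $V_\gamma\mathbf{X}_{s+1}\le\mathbf{0}$ via Lemma \ref{lemma:matrix_form_OP}, block $j$ uses precisely the indices $s\in I_j:=\{(j-1)(m+2)+1,\dots,(j-1)(m+2)+m\}$, so that
\begin{equation*}
\frac{m}{\sqrt{n}}\sum_{j=1}^{\lfloor n_b\tau\rfloor}(\hat q_{j,m}-q)=\frac{1}{\sqrt{n}}\sum_{j=1}^{\lfloor n_b\tau\rfloor}\sum_{s\in I_j}h(\mathbf{X}_s).
\end{equation*}
Consecutive index sets are separated by a gap of exactly two, so $\bigcup_{j\le\lfloor n_b\tau\rfloor}I_j$ equals the contiguous range $\{1,\dots,M_\tau\}$, $M_\tau:=(\lfloor n_b\tau\rfloor-1)(m+2)+m$, minus $2(\lfloor n_b\tau\rfloor-1)$ skipped indices; note that $m\to\infty$ and $m=o(n)$ (forced by $m/\sqrt n\to\infty$ together with $n_b\to\infty$) give $M_\tau/n\to\tau$ uniformly on $[0,1]$.

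For part (i), boundedness of $h$ (say $|h|\le\C$) bounds the skipped contribution uniformly in $\tau$ by $2\C\,n_b/\sqrt{n}\le 2\C\sqrt{n}/m\to0$, which is exactly where the hypothesis $m/\sqrt n\to\infty$ enters. Hence it suffices to analyse $G_n(M_\tau/n)$, where $G_n(u):=n^{-1/2}\sum_{s=1}^{\lfloor nu\rfloor}h(\mathbf{X}_s)$. Corollary \ref{asymptotic_turning_rate} yields $G_n\overset{\D[0,1]}{\Longrightarrow}\sigma B$ with the asserted $\sigma^2$, and since the deterministic time change $\tau\mapsto M_\tau/n$ converges uniformly to the identity while the limit $B$ has continuous paths, composition gives $G_n(M_\cdot/n)\overset{\D[0,1]}{\Longrightarrow}\sigma B$. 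This proves \eqref{averages_turning_rate_SRD}.

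For part (ii) only $\tau=1$ is needed. The skipped indices now contribute at most $2\C\,n_b/n^{1/2+d}=2\C\,n^{1/2-d}/m\to0$, again by $m/\sqrt n\to\infty$ (and $d>0$). Thus it remains to show $n^{-(1/2+d)}\sum_{s=1}^{M}h(\mathbf{X}_s)\xrightarrow{\D}\N(0,\sigma^2)$ with $M:=M_1\sim n$. Each $(V_\gamma\mathbf{X}_t)_t$ is a multivariate linear process with coefficients $V_\gamma B_j\sim j^{d-1}V_\gamma$ and $V_\gamma$ invertible, and satisfies condition \eqref{eq:LIP-condition1} by Proposition \ref{prop:Lipschitzness_gradient}, as already verified in the proof of Theorem \ref{theorem:convergence_OP_LRD}. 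Applying the reduction principle (Theorem \ref{theorem:reduction_principle}) at $\mathbf{x}=\mathbf{0}$ to each of the four $\gamma\in\mathcal{T}$ and summing gives
\begin{equation*}
M^{1/2-d}\Big|\frac{1}{M}\sum_{s=1}^{M}h(\mathbf{X}_s)+\Big(\sum_{\gamma\in\mathcal{T}}\nabla\tilde p_\gamma(\mathbf{0})^\top V_\gamma\Big)\bar{\mathbf{X}}_M\Big|\xrightarrow{\PP}0,
\end{equation*}
where $\tilde p_\gamma(\cdot):=\PP(V_\gamma\mathbf{X}_1\le\cdot)$. By Theorem \ref{theorem:chung_theorem1} (in the form of its remark, with $A_\infty=I_r$ and $\Sigma=\mathbf{E}$) one has $M^{1/2-d}\bar{\mathbf{X}}_M\xrightarrow{\D}\N(\mathbf{0},V)$ with $V=\frac{\Gamma(d)^2}{\Gamma(2d+2)\cos(\pi d)}\mathbf{E}$, so Slutsky's theorem together with $M/n\to1$ yields the claim with $\sigma^2=\sum_{\gamma_1,\gamma_2\in\mathcal{T}}\nabla\tilde p_{\gamma_1}(\mathbf{0})^\top V_{\gamma_1}VV_{\gamma_2}^\top\nabla\tilde p_{\gamma_2}(\mathbf{0})$, i.e.\ \eqref{averages_turning_rate_LRD}.

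The two distributional inputs (Corollary \ref{asymptotic_turning_rate} and the reduction principle) are already available, so the real work is the combinatorial bookkeeping of the non-overlapping block design and, in part (i), transferring the $\lfloor n\tau\rfloor$-indexed functional CLT to the block-indexed process through the uniform time change together with the uniform-in-$\tau$ control of the boundary gaps. I expect this last point to be the main obstacle: both the negligibility of the $\mathcal{O}(n/m)$ skipped summands and the identity $M_\tau/n\to\tau$ rely decisively on $m/\sqrt n\to\infty$; were $m$ only of order $\sqrt n$, the skipped terms would enter at the order of the signal and the stated limits would break down.
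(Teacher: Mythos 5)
Your proposal is correct and follows essentially the same route as the paper: write $m(\hat q_{j,m}-q)$ as a block sum of $h(\mathbf{X}_s)$, complete it to a contiguous sum by adding back the two skipped summands per block boundary, show the $\mathcal{O}(n_b)$ boundary terms are negligible under $m/\sqrt n\to\infty$, and then invoke Corollary \ref{asymptotic_turning_rate} for (i) and the reduction principle together with Theorem \ref{theorem:chung_theorem1} for (ii). The only (harmless) deviations are that you dispose of the skipped terms by the deterministic bound $|h|\leq\C$ rather than the paper's Chebyshev/variance estimate, and that you treat the index mismatch between $M_\tau$ and $\lfloor n\tau\rfloor$ explicitly via a uniform time change and continuity of the Brownian limit, a point the paper's proof passes over more quickly.
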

\begin{proof}[Proof of Theorem \ref{theorem:asymptotic_turning_rates_series}]
We set $ h(\mathbf{X}_t):=   \sum_{\gamma \in \mathcal{T}} \bm{1}\left( V_{\gamma   }\Xb_t \leq \mathbf{0} \right)-q\;.$ 
\begin{itemize}
    \item [(i)] Using (\ref{turning_rate_block}),
\begin{align*}
    m(\hat{q}_{j,m} -q)=& \sum_{i=0}^{m-1}  \sum_{\gamma \in \mathcal{T}}  \bm{1}\left(\Pi(\xi_{(j-1)(m+2)+i} ,  \xi_{(j-1)(m+2) +i+1}, \xi_{(j-1)(m+2)+i+2} ) =\gamma \right)-mq \\
    =&  \sum_{i=1}^{m} h\left(X_{(j-1)(m+2)+i},X_{(j-1)(m+2)+i+1}\right)\;.
\end{align*}
Therefore, 
\begin{align*}
  m\sum\limits_{j=1}^{\lfloor n_b \tau \rfloor} (\hat{q}_{j,m}-q)=  \sum\limits_{j=1}^{\lfloor n_b\tau\rfloor}\sum_{i=1}^{m} h\left(X_{(j-1)(m+2)+i},X_{(j-1)(m+2)+i+1}\right)\;.
\end{align*}

Further, we can unroll the previous expression: $\sum\limits_{j=1}^{\lfloor n_b\tau\rfloor}\left( \hat{q}_{j,m}-q \right)$ is the sum 
of consecutive terms of the form $h(X_k, X_{k+1})$, but the  summands $h(X_{jm+2j-1}, X_{jm+2j})$ and $h( X_{jm+2j}, X_{jm+2j+1})$ 
are missing for $j=1,\ldots, \lfloor n_b \tau \rfloor -1\;.$ 

Define $ I_n = \{ j(m +2)-1, j(m+2)\, | \, j=1,\ldots, \lfloor n_bt \rfloor -1 \}$. Given the infinite number of data points for $(X_t)_{t\geq 1}$, we complete the sum by letting  $j=1, \ldots,\lfloor n_b\tau \rfloor  $.  
\begin{align*}
 m   \sum\limits_{j=1}^{\lfloor n_b \tau \rfloor} \left( \hat{q}_{j,m}-q \right) =& \sum_{ k \notin I_n}h( X_{k}, X_{k+1}) \quad \text{add and subtract the missing terms}\\
    =& \sum_{k=1}^{\lfloor n_b \tau\rfloor (m+2) } h(\Xb_k) - \sum_{j=1}^{\lfloor n_b \tau \rfloor } \left(h(\Xb_{j(m+2)-1}) + h(\Xb_{j(m+2)}) \right) \;,
\end{align*}
where $\Xb_k=(X_k, X_{k+1})^{\top}$. 
Dividing by $\frac{1}{\sqrt{n_b(m+2)}}$ 
\begin{align*}
   \frac{m}{\sqrt{n_b(m+2)}}\sum\limits_{j=1}^{\lfloor n \tau \rfloor} \left(\hat{q}_{j,m} -q\right)=&  \frac{1}{\sqrt{n_b(m+2)} } \sum_{k=1}^{\lfloor (n_b (m+2)) \tau \rfloor  } h(\Xb_k) - \frac{1}{\sqrt{n_b(m+2)}}\sum_{j=1}^{\lfloor n_b \tau \rfloor} \left(h(\Xb_{j(m+2)-1}) + h(\Xb_{j(m+2)}) \right)\\
    =& \frac{1}{\sqrt{n_b(m+2)} } \sum_{k=1}^{\lfloor n \tau \rfloor  } h(\Xb_k) - \frac{1}{\sqrt{n_b(m+2)}}\sum_{j=1}^{\lfloor n_b \tau \rfloor } \left(h(\Xb_{j(m+2)-1}) + h(\Xb_{j(m+2)}) \right)\;.\\
\end{align*}
Corollary \ref{asymptotic_turning_rate} yields 
$$\frac{1}{\sqrt{n } }  \sum_{j=1}^{\lfloor n\, \tau \rfloor  } h(\Xb_j) \overset{\D[0,1]}{\Longrightarrow} \sigma B(\tau) \text{ for }\tau\in [0,1] \;. $$
 To conclude, we prove that 
$$\frac{1}{\sqrt{n_b(m+2)}}\sum_{j=1}^{\lfloor n_b \tau \rfloor} h(\Xb_{j(m+2)-1} ) +  \frac{1}{\sqrt{n_b(m+2)}}\sum_{j=1}^{\lfloor n_b \tau \rfloor} h(\Xb_{j(m+2)}) =o_{\PP}(1)\;.$$
 It suffices to prove the claim for one of the two addends, as the argument is identical for both terms. For any $\varepsilon>0$
\begin{align*}
    \PP \left( \sup_{ \tau\in [0,1]} \left| \frac{1}{\sqrt{n_b(m+2)}}\sum_{j=1}^{\lfloor 
n_b \tau \rfloor} h(\Xb_{j(m+2)})\right| > \varepsilon \right) &=  \PP \left( \sup_{k=1, \ldots, n_b}  \frac{1}{\sqrt{n_b(m+2)}}\left| \sum_{j=1}^{k} h(\Xb_{j(m+2)})\right| > \varepsilon \right)\\
& \leq   \PP \left( \bigcup_{k=1}^ {n_b} \left\{  \frac{1}{\sqrt{n_b(m+2)}}\left| \sum_{j=1}^{k} h(\Xb_{j(m+2)})\right| > \varepsilon \right\}\right)\\
&\leq \sum_{k=1}^{n_b} \PP \left( \frac{1}{\sqrt{n_b(m+2)}} \left| \sum_{j=1}^{k} h(\Xb_{j(m+2)})\right| > \varepsilon  \right)\\
&\leq  \sum_{k=1}^{n_b} \frac{1}{n_b (m+2) \varepsilon^2} \Var \left( \sum_{j=1}^{k} 
h(\Xb_{j(m+2)})\right)\;. 
\end{align*}
To upper bound the variance, recall that $h(\Xb_j)$ is a stationary zero mean process, being the composition of a stationary process with a measurable function $h$. Using the definition of variance 
\begin{align*}
    \Var\left( \sum_{i=1}^k h(\Xb_{i(m+2)}) \right) =& \sum\limits_{i=1}^k\sum\limits_{j=1}^k\Cov(h(\Xb_{i(m+2)}), h(\Xb_{j(m+2)})) = k \,\gamma (0) + 2 \sum_{j=1}^{k-1}  (k-j)\gamma(j(m+2))\;,
\end{align*}
where $\gamma(k)=\Cov(h(\Xb_i),h( \Xb_{i+k}))$. It follows that 
\begin{align*}
    \frac{1}{k} \Var\left( \sum_{i=1}^k h(\Xb_{i(m+2)}) \right) \leq & |\gamma(0)|+ 2\sum_{j=1}^{k-1}\left(1-\frac{j}{k} \right)|\gamma(j(m+2))|
     \leq  |\gamma(0)|+ 4\sum_{j=1}^{\infty}|\gamma(j)|< \C\;.\\
\end{align*}
The last inequality follows from Lemma 3.5 in \citet{Furmanczyk}. 
Thus, it follows that $ \Var\left( \sum_{i=1}^k h(\Xb_{i(m+2)}) \right)\leq \C k\;.$ Putting all pieces together, 
\begin{align*}
    \PP \left( \sup_{\tau\in [0,1]} \left| \frac{1}{\sqrt{n_b(m+2)}}\sum_{j=1}^{\lfloor 
n_b \tau \rfloor -1} h(\Xb_{j(m+2)})\right| > \varepsilon \right)  
&\leq  \sum_{k=1}^{n_b-1} \frac{1}{n_b (m+2) \varepsilon^2} \Var \left( \sum_{j=1}^{k} 
h(\Xb_{j(m+2)})\right) \\
&\leq \frac{\C}{n_b (m+2) \varepsilon^2}  \sum_{k=1}^{n_b-1} k = \mathcal{O}\left( \frac{n_b^2}{n_b(m+2)} \right)\\
=&\mathcal{O}\left( \frac{1}{(m/\sqrt{n})^2}\right)
 \xrightarrow{n\to \infty } 0\;.
\end{align*}
\item [(ii)]
Following the same argument of part (i), let us go back to the expression (set $\tau=1$)
\begin{align*}
 m \sum\limits_{j=1}^{n_b} \left(\hat{q}_{j,m} -q\right)=   \sum_{k=1}^{n_b(m+2) } h(\Xb_k) - \sum_{j=1}^{n_b } \left(h(\Xb_{j(m+2)-1}) + h(\Xb_{j(m+2)}) \right)\;.\\
\end{align*}
Multiplying both sides by $(n_b(m+2))^{-(\frac{1}{2}+d)}$
\begin{align*}
 m(n_b(m+2))^{-\left(\frac{1}{2}+d\right)}  \sum\limits_{j=1}^{n_b} \left(\hat{q}_{j,m} -q\right)=& (n_b(m+2))^{-\left(\frac{1}{2}+d\right)}   \sum_{k=1}^{n_b(m+2) } h(\Xb_k) \\ &- (n_b(m+2))^{-\left(\frac{1}{2}+d\right)} \sum_{j=1}^{n_b } \left(h(\Xb_{j(m+2)-1}) + h(\Xb_{j(m+2)}) \right)\;.\\
\end{align*}
We firstly show that the second term converges in probability to 0.  For this, note that 
\begin{align*}
    \PP \left(\left| \sum_{j=1}^{n_b} h(\Xb_{j(m+2)})\right| > \left(n_b(m+2)\right)^{(\frac{1}{2}+d)} \varepsilon  \right)& \leq \varepsilon^{-2} (n_b(m+2))^{-1-2d} \Var \left(  \sum_{j=1}^{n_b } h(\Xb_{j(m+2)}) \right)\\
    &\leq \C \varepsilon^{-2} (n_b(m+2))^{-1-2d} n_b^2 \\
    & =  \mathcal{O}\left( \frac{n_b^{2}}{{(n_b(m+2))}^{1+2d}}\right)  =  \mathcal{O}\left( \frac{1}{\left(\frac{m}{\sqrt{n}}\right)^2n^{2d}}\right)\xrightarrow
    {n\to \infty }0\;.
\end{align*}
 Next, we prove convergence in distribution for the first term $n^{-(\frac{1}{2}+d)} \sum_{j=1}^{n} h(\Xb_j)$ as a consequence of Theorem \ref{theorem:reduction_principle}. For $\gamma \in \mathcal{T}$, let $p_\gamma(\mathbf{x})=\mathbb{P}( V_\gamma \mathbf{X}_1\leq \mathbf{x})$. Then, 
\begin{align*}
n^{-(\frac{1}{2}+d)} \left| \sum_{j=1}^n h(\Xb_j)-q+ \sum_{ \gamma \in \mathcal{T}}\left( (\nabla p_\gamma (\textbf{0}) V_\gamma\right) n\Bar{\mathbf{X}}_n \right| =&n^{-(\frac{1}{2}+d)} \left| \sum_{j=1}^n \sum_{ \gamma \in \mathcal{T}} \left( \bm{1}\left( V_\gamma \mathbf{X}_j\leq \mathbf{0}\right)-p_\gamma(0)\right)+ \sum_{ \gamma \in \mathcal{T}}\left( \nabla p_\gamma (\textbf{0}) V_\gamma\right)\sum_{j=1}^{n}\mathbf{X}_j  \right| \\
    &\leq \sum_{ \gamma \in \mathcal{T}} n^{-(\frac{1}{2}+d)} \left| \sum_{j=1}^n \bm{1}\left( V_\gamma \mathbf{X}_j\leq \mathbf{0}\right)-p_\gamma(0) + \nabla p_\gamma (\textbf{0})V_\gamma \sum_{j=1}^{n}\mathbf{X}_j \right| \xrightarrow{\PP}0\;.
\end{align*}
The last relation is due to the reduction principle \eqref{uniform_principle}. Thus, an application of Theorem 1 in \citet{chung_2002} yields
\begin{align}
    n^{-(\frac{1}{2}+d)} \sum_{j=1}^n h(\Xb_j) \xrightarrow{\D} \N(0, \sigma^2)\;,
\end{align}
where $ \sigma^2= \sum_{\gamma_1, \gamma_2 \in \mathcal{T}} (\nabla p_{\gamma_2}(\textbf{0}))^\top V_{\gamma_1}V V_{\gamma_2}^\top \nabla p_{\gamma_1}(\textbf{0}) \;,$ with $V=\frac{\Gamma(d)^2}{\Gamma(2d+2)\cos(\pi d)} \mathbf{E}$, and $\mathbf{E}$ is the matrix with all entries equal 1.  
\end{itemize}
\end{proof}
\begin{proof}[Proof of Corollary \ref{self_normalized_cusum_turning_rate}]
In order to derive the asymptotic distribution of $SC_{n_b}$, we need to find to first derive the limit of $\hat{V}_{k,n_b}$. The strategy is to write  $\hat{V}_{k,n_b}$ as a functional of the process $W_{n_b}(s) := \frac{m}{\sqrt{n}} \sum_{j=1}^{\lfloor n_bs\rfloor} (\hat{q}_{j,m}-q)$, for which $W_{n_b } \overset{\D[0,1]}{\Longrightarrow} \sigma B(\tau)$; see Theorem \ref{theorem:asymptotic_turning_rates_series}.  
 The variance estimator \eqref{V_{k,n}} is the sum of two terms, namely $V^2_{k, n_b}:=\frac{1}{n_b}\sum_{t=1}^k S_t^2(1,k)+\frac{1}{n_b}\sum_{t=k+1}^{n_b} S_t^2(k+1,n_b)$. 
\begin{enumerate}
    \item For $i=1, \ldots, k$,
$$ S_i(1,k)=\sum_{j=1}^i ( \hat{q}_{j,m} -\Bar{q}_{1,k})= \sum_{j=1}^{i} \hat{q}_{j,m} -\frac{i}{k} \sum_{j=1}^k \hat{q}_{j,m}\;.$$
Denote with $\tau \in [0,1]$ the unique value satisfying $k= \lfloor n_b \tau \rfloor$. Then, 
\begin{align*}
   \frac{1}{n_b}\sum_{i=1}^k S^2_i(1,k)= \frac{1}{n_b}\sum_{i=1}^{\lfloor n_b \tau \rfloor } \left( \sum_{j=1}^{i} \hat{q}_{j,m} -\frac{i}{k} \sum_{j=1}^k \hat{q}_{j,m}\right)^2 = \int_0^\tau \left(  \sum_{j=1}^{\lfloor n_b s\rfloor} \hat{q}_{j,m} -\frac{\lfloor n_b s \rfloor}{\lfloor n_b \tau\rfloor} \sum_{j=1}^{\lfloor n_b \tau \rfloor} \hat{q}_{j,m}\right)^2 \, ds\;.
\end{align*}
Multiplying both sides by $\left( \frac{m}{\sqrt{n}}\right)^2 $ yields 
\begin{align*}
  \left( \frac{m}{\sqrt{n}}\right)^2  \left(  \frac{1}{n_b}\sum_{i=1}^k S^2_i(1,k)\right) = \int_0^\tau \left( W_{n_b}(s) - \frac{\lfloor n_b s \rfloor}{\lfloor n_b \tau\rfloor} W_n(\tau) \right)^2 \, ds\;,
\end{align*}
\item For $i=k+1,\ldots, n_b$ 
\begin{align*}
   S_i(k+1,n_b)=&\sum_{j=k+1}^i ( \hat{q}_{j,m} -\Bar{q}_{k+1,n_b})= \sum_{j=k+1}^{i} \hat{q}_{j,m} -\frac{i-k}{n_b-k} \sum_{j=k+1}^{n_b} \hat{q}_{j,m} \;.
\end{align*}
Then, 
\begin{align*}
 \left( \frac{m}{\sqrt{n}}\right)^2   \left( \frac{1}{n_b} \sum_{i=k+1}^n S_i^2(k+1,n_b) \right) =& \left( \frac{m}{\sqrt{n}}\right)^2   \int_{\tau}^1\left(\sum_{j=k+1}^{\lfloor n_bs \rfloor} \hat{q}_{j,m} -\frac{\lfloor n_bs \rfloor-\lfloor n_b\tau \rfloor}{n_b-\lfloor n_b\tau \rfloor} \sum_{j=k+1}^{n_b} \hat{q}_{j,m} \right)^2\, ds\\
=&\int_{\tau}^1 \left( W_{n_b}(s) - W_{n_b}(\tau) -\frac{\lfloor n_bs \rfloor-\lfloor n_b\tau \rfloor}{n_b-\lfloor n_b\tau \rfloor} \left( W_{n_b}(1)-W_{n_b}(\tau) \right)\right)^2 \, ds\;.\\
\end{align*}
 \end{enumerate}
Since $\frac{\lfloor n_bs \rfloor}{n_b} = s + o(1)$,  we can replace all $\lfloor n_bs \rfloor$ by $s$.  Finally, Theorem \ref{theorem:cusum_statistics_limit} and the continuous mapping theorem yield the desired result. 
\end{proof}

\section{Auxiliary Results} 
\label{app2}
\begin{theorem}[Theorem 2.1 and 2.2 in \citet{Furmanczyk}]
\label{Theorem:Furmanczyk} 
Let $\Xb_t$ the multivariate linear process 
    $$\Xb_t = \sum_{j=0}^\infty A_j \mathbf{Z}_{t-j}$$ 
   where $(\mathbf{Z}_{j})_{j \in \NN} $ is an i.i.d. sequence with variance $\Sigma$ and $(A_j)_{j\in\NN}$ is a sequence of non random matrices in $\RR^p$. Further,  $\EE[\| \mathbf{Z}_1\|^2]<\infty$, where $\|\cdot\|$ represents the standard Euclidean norm. Consider the following 
    \begin{enumerate} 
    \item  The sequence $(A_j)_{j \in \NN}$ satisfies the SRD condition, $\sum\limits_{j=0}^\infty \|A_j\| <\infty$ for $\|\cdot\|$ induced by a vector norm $\|\cdot\|$. 
    \item For every $s \in \NN$, consider the function  $G_s(\textbf{x})=\EE \left[ g\left( \sum_{r=0}^{s-1}A_r \mathbf{Z}_{k-r} +\textbf{x}\right)\right]$, where $g:\RR^p \to \RR$ is a measureable function such that $\EE[ g(\Xb_1)^2] < \infty$.
    \begin{itemize}
        \item []\textbf{(Lip)}   $G_s$  is uniformly Lipschitz over $s$, i.e
$$ \mid G_s(x)-G_s(y)\mid \leq \text{Lip}(G_s)\| x-y\| \quad \forall x,y \in \RR^p $$
    with $\sup\limits_{s\geq 1}\text{Lip}(G_s) < \C$, where $\C$ is a constant indepent from $s$.
     \item []\textbf{(Lip$^\ast$)}   $G_s$  is definitely Lipschitz over $s$, i.e exists $s_0$ such that for all $s>s_0$
$$ \mid G_s(x)-G_s(y)\mid \leq \text{Lip}(G_s)\| x-y\| \quad \forall x,y \in \RR^p $$
    with $\sup \limits_{s\geq s_0}\text{Lip}(G_s) < \C$, where $\C$ is a constant indepent from $s$.
    \end{itemize}
   \item $ \sum\limits_{i=j}^\infty\|A_i\|^2 = \mathcal{O}(j^{-t})$ for some $t>2$. 
    \end{enumerate}
If (1, 2. \textbf{Lip} ) or (1, 2. \textbf{Lip} $^\ast$, 3) are satisfied, then
\begin{equation*}
     n^{-1/2}\sum_{j=1}^{[nt]} g(\Xb_j)  \overset{\D[0,1]}{\Longrightarrow} \sigma B(t) \quad t\in [0,1]\;, 
\end{equation*}
    where $\sigma^2 = \EE[g(\Xb_1)^2]+2\sum_{j=1}^\infty \EE[g(\Xb_1)g(\Xb_{1+j})]\;.$
\end{theorem}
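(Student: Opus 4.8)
The plan is to reprove this invariance principle via the Ho--Hsing martingale-difference decomposition combined with Theorem \ref{theorem:theorem4.2Billingsley} (Billingsley's Theorem 4.2), following verbatim the template already used in the proof of Theorem \ref{theorem:SRD_multivariate}; the only change is that the indicator $\bm{1}(\Xb_t\le\bu)$ is replaced by the general subordinating function $g$ and the distribution functions $p_j$ are replaced by the conditional-mean functions $G_j$. Writing $\mathbf{R}_{t,j}=\sum_{i>j}A_i\mathbf{Z}_{t-i}$ and $\F_{t}=\sigma(\mathbf{Z}_t,\mathbf{Z}_{t-1},\dots)$, I note that $\sum_{i\le j-1}A_i\mathbf{Z}_{t-i}$ is independent of $\F_{t-j}$ while $\mathbf{R}_{t,j-1}$ is $\F_{t-j}$-measurable, so Lemma \ref{sigma_algebra} gives $\EE[g(\Xb_t)\mid\F_{t-j}]=G_j(\mathbf{R}_{t,j-1})$ and hence the decomposition
\[
g(\Xb_t)-\EE[g(\Xb_t)]=\sum_{j=0}^{\infty}U_{t,j},\qquad U_{t,j}:=G_j(\mathbf{R}_{t,j-1})-G_{j+1}(\mathbf{R}_{t,j}).
\]

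First I would establish the second-moment bound $\EE[U_{t,j}^2]\le \C\|A_j\|^2$ in the Lipschitz regime. Using the recursion $G_{j+1}(\mathbf{x})=\int G_j(\mathbf{x}+A_j\mathbf{t})\,d\mathcal{G}(\mathbf{t})$ together with $\mathbf{R}_{t,j-1}=\mathbf{R}_{t,j}+A_j\mathbf{Z}_{t-j}$, one writes $U_{t,j}=\int\big(G_j(\mathbf{R}_{t,j}+A_j\mathbf{Z}_{t-j})-G_j(\mathbf{R}_{t,j}+A_j\mathbf{t})\big)\,d\mathcal{G}(\mathbf{t})$ and applies the Lipschitz property of $G_j$ to obtain $|U_{t,j}|\le\Lip(G_j)\,\|A_j\|\,(\|\mathbf{Z}_{t-j}\|+\EE\|\mathbf{Z}_1\|)$; squaring and invoking $\EE\|\mathbf{Z}_1\|^2<\infty$ gives the claim, exactly as in Lemma \ref{lemma:lemma3.4Furmanczyk}. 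Under \textbf{(Lip)} this holds for every $j\ge0$ since $\sup_j\Lip(G_j)<\C$; under \textbf{(Lip$^\ast$)} it holds only for $j>s_0$, while for the finitely many indices $j\le s_0$ one has the crude bound $\EE[U_{t,j}^2]\le 4\,\EE[g(\Xb_1)^2]<\infty$, coming from $L^2$-contractivity of conditional expectation, and it is here that the extra decay $\EE\|\mathbf{R}_{t,j}\|^2\le\C\,j^{-t}$ with $t>2$ provided by condition (3) is used to keep the head's contribution to the variance and to the tail negligible for unbounded $g$.

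With these bounds in hand I would verify the three hypotheses of Theorem \ref{theorem:theorem4.2Billingsley} for $W_n(\tau)=n^{-1/2}\sum_{t=1}^{[n\tau]}\sum_{j\ge0}U_{t,j}$ and its truncation $V_{u,n}(\tau)=n^{-1/2}\sum_{t=1}^{[n\tau]}\sum_{j=0}^{u-1}U_{t,j}$, precisely as in the SRD proof. Reindexing the double sum (as in the proof of Theorem \ref{theorem:SRD_multivariate}) exhibits $V_{u,n}$ as $n^{-1/2}\sum_t M_t(u)$ plus a boundary term $H_{u,n}=\mathcal{O}_{\PP}(n^{-1/2})$, where $M_t(u)=\sum_{j=0}^{u-1}U_{t+j,j}$ is a stationary, ergodic, square-integrable martingale-difference sequence; the functional martingale CLT (Billingsley, Theorem 23.1) then yields $V_{u,n}\overset{\D[0,1]}{\Longrightarrow}\sigma_u B$ with $\sigma_u^2=\Var(M_1(u))$, giving hypothesis (i). Hypothesis (ii) follows because $\sigma_u^2\le\C\big(\sum_{j=0}^{u-1}\sqrt{\EE[U_{1,j}^2]}\big)^2<\infty$ by Cauchy--Schwarz and condition (1), and the orthogonality relation of Lemma \ref{lemma6.4} lets one compute the limit $\sigma^2=\sum_{s\in\ZZ}\Cov(g(\Xb_0),g(\Xb_s))$, which rearranges to the stated long-run variance $\EE[g(\Xb_1)^2]+2\sum_{j\ge1}\EE[g(\Xb_1)g(\Xb_{1+j})]$. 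For hypothesis (iii), Lemma \ref{lemma6.4} and Cauchy--Schwarz give $\EE\big[(\sum_{t=1}^{[n\tau]}\sum_{j\ge u}U_{t,j})^2\big]\le \C\,n\big(\sum_{j> u}\|A_j\|\big)^2$, so $\limsup_n\PP(|V_{u,n}-W_n|\ge\varepsilon)\le\C\varepsilon^{-2}(\sum_{j>u}\|A_j\|)^2\to0$ as $u\to\infty$ by condition (1).

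The main obstacle is the eventually-Lipschitz case \textbf{(Lip$^\ast$)} with unbounded $g$: without a uniform Lipschitz constant the clean estimate $\EE[U_{t,j}^2]\lesssim\|A_j\|^2$ is unavailable for $j\le s_0$, so the argument must instead show that the finitely many non-Lipschitz head differences, though not controlled by $\|A_j\|$, contribute only a bounded amount to $\sigma_u^2$ and drop out of the tail once $u>s_0$; reconciling precisely where the decay condition (3) enters versus the summability (1) is the delicate point and is exactly why Furmańczyk states the two separate hypothesis sets. Everything else---the martingale structure, the boundary estimate $H_{u,n}\to0$, and the identification of $\sigma^2$ through stationarity and Lemma \ref{lemma6.4}---is routine and parallels the proof of Theorem \ref{theorem:SRD_multivariate} line by line.
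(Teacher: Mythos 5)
The first thing to note is that the paper contains no proof of this statement: Theorem \ref{Theorem:Furmanczyk} is an imported auxiliary result, stated in Appendix \ref{app2} with attribution to \citet{Furmanczyk} and never proved; what the paper proves is a \emph{modified version} of it for indicator functionals (Theorem \ref{theorem:SRD_multivariate}, via Lemma \ref{lemma:lemma3.4Furmanczyk}, Lemma \ref{lemma6.4} and Theorem \ref{theorem:theorem4.2Billingsley}). Your reconstruction is exactly that template run in the opposite direction --- replace $\bm{1}(\Xb_t\leq\bu)$ by a general $g$ and the distribution functions $p_j$ by the conditional means $G_j$ --- and for the \textbf{(Lip)} branch it is sound: the identification $\EE[g(\Xb_t)\mid\F_{t-j}]=G_j(\mathbf{R}_{t,j-1})$ (with the limit $j\to\infty$ justified by backwards martingale convergence and triviality of the tail $\sigma$-field, as in Lemma \ref{sigma_algebra}), the bound $\EE[U_{t,j}^2]\leq \C\|A_j\|^2$ from the recursion $G_{j+1}(\mathbf{x})=\int G_j(\mathbf{x}+A_j\mathbf{t})\,d\mathcal{G}(\mathbf{t})$, the reindexed martingale $M_t(u)=\sum_{j=0}^{u-1}U_{t+j,j}$ with boundary term $H_{u,n}$, and the three conditions of Theorem \ref{theorem:theorem4.2Billingsley} all go through word for word as in Appendix \ref{appendix:SRD}, with the orthogonality of Lemma \ref{lemma6.4} applying unchanged because its proof uses only the filtration structure, not the indicator form of $g$.

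The soft spot is your \textbf{(Lip$^\ast$)} paragraph, and you half-admit it yourself. You assert that condition (3) is ``used to keep the head's contribution to the variance and to the tail negligible,'' but you never exhibit a single step in which it is consumed, and your closing paragraph concedes you cannot reconcile where (3) enters versus (1). In fact, within your own scheme the head terms do not need (3) at all: there are only finitely many indices $j\leq s_0$, the contraction bound $\EE[U_{t,j}^2]\leq 4\,\EE[g(\Xb_1)^2]$ holds for each of them, and once the truncation level satisfies $u>s_0$ these increments sit inside $M_t(u)$ and never appear in the tail estimate of condition (iii), which involves only Lipschitz-controlled indices $j\geq u>s_0$; for condition (ii), the mixed Cauchy--Schwarz bounds (a constant for $i,j\leq s_0$, $\C\|A_{i+1}\|$ for $j\leq s_0<i$, $\C\|A_{j+1}\|\|A_{i+1}\|$ otherwise) already give absolute convergence of the double sum $\sum_{i,j}\EE[U_{1,j}U_{1+i-j,i}]$ using only $\sum_j\|A_j\|<\infty$. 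So you face a fork you did not take: either carry this through cleanly, in which case your martingale route delivers the stated conclusion from (1) and (Lip$^\ast$) alone --- structurally the same observation that lets the paper's Lemma \ref{lemma:lemma3.4Furmanczyk} dispense with a uniform Lipschitz hypothesis by exploiting boundedness of indicators --- or, if your goal is to reproduce Furma\'nczyk's Theorem 2.2 as stated, you must locate concretely where his proof (which is organized differently from this martingale decomposition) actually uses the decay $\sum_{i\geq j}\|A_i\|^2=\mathcal{O}(j^{-t})$, $t>2$. As written, the paragraph gestures at the delicacy without resolving it, and that unresolved fork is the one genuine gap in an otherwise correct proposal.
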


\end{document}